\documentclass[11pt,a4paper]{amsart}

\usepackage[margin=1.25in,a4paper]{geometry}
\usepackage{lmodern}
\usepackage[utf8]{inputenc}
\usepackage[T1]{fontenc}

\usepackage{microtype}
\usepackage[shortlabels]{enumitem}

\usepackage[english]{babel}
\usepackage{mathrsfs}
\usepackage{amssymb}

\usepackage{xcolor}
\definecolor{linkcol}{RGB}{0,80,158}
\definecolor{citecol}{RGB}{46,117,120}

\usepackage[colorlinks]{hyperref}
\hypersetup{
    colorlinks=true,
    linkcolor=linkcol,
    urlcolor=linkcol,
    citecolor=citecol,
}
\usepackage[hyperpageref]{backref}
\usepackage[msc-links,alphabetic]{amsrefs}
\usepackage[capitalize,nameinlink]{cleveref}
\crefname{equation}{}{}
\numberwithin{equation}{section}

\usepackage{tikz} 
\usepackage{pgfplots} 
\pgfplotsset{compat=1.16}
\usepackage{tikz-cd}
\tikzcdset{arrow style=tikz, arrows={thick}, diagrams={>=stealth}}

\theoremstyle{plain}
\newtheorem{theorem}{Theorem}[section]
\newtheorem{lemma}[theorem]{Lemma}
\newtheorem{corollary}[theorem]{Corollary}

\theoremstyle{definition}
\newtheorem*{definition}{Definition}

\newtheorem{question}{Question}
\theoremstyle{remark}
\newtheorem*{remark}{Remark}

\usepackage{thm-restate}

\newcommand{\bC}{\mathbb{C}}
\newcommand{\bD}{\mathbb{D}}
\newcommand{\bQ}{\mathbb{Q}}
\newcommand{\bR}{\mathbb{R}}
\newcommand{\bT}{\mathbb{T}}
\newcommand{\bZ}{\mathbb{Z}}
\newcommand{\bS}{\mathbb{S}}
\newcommand{\cE}{\mathcal{E}}

\newcommand{\sC}{\mathscr{C}}

\newcommand{\sH}{\mathscr{H}}
\newcommand{\sJ}{\mathscr{J}}
\newcommand{\sM}{\mathscr{M}}
\newcommand{\sN}{\mathscr{N}}

\newcommand{\sZ}{\mathscr{Z}}
\newcommand{\fp}{\mathfrak{p}}
\newcommand{\abs}[1]{\lvert#1\rvert}
\newcommand{\Abs}[1]{\left\lvert#1\right\rvert}
\newcommand{\babs}[1]{\bigl\lvert#1\bigr\rvert}
\newcommand{\norm}[1]{\lVert#1\rVert}

\renewcommand{\Re}{\operatorname{Re}}
\renewcommand{\Im}{\operatorname{Im}}
\newcommand{\dd}{\mathrm{d}}
\newcommand{\dist}{\operatorname{dist}}
\newcommand{\diam}{\operatorname{diam}}

\title[Littlewood--Paley and mean counting for Dirichlet series]{The Littlewood--Paley formula and mean counting function for vertical limits of Dirichlet series}
\author{Viktor Andersson}
\address{Department of Mathematical Sciences, Norwegian University of Science and Technology (NTNU), 7491 Trondheim, Norway}
\email{viktor.andersson@ntnu.no}
\date{\today}

\begin{document}
\begin{abstract}
We prove a Littlewood--Paley formula for the Hardy space of Dirichlet series $\sH^p$ with $1\leq p<\infty$ in terms of almost every vertical limit function. This significantly strengthens previous results, which hold either only as an average over the vertical limit functions or under additional assumptions of uniform convergence. As part of our approach, we obtain a Hardy--Stein identity for the derivative of the $p$-mean of almost every vertical limit. We further show that the mean counting function exists for any $f$ in $\sH^p$ in terms of almost all of its vertical limit functions. This is done by establishing a version of Jensen's formula in this setting. In the process, we also deduce ergodic versions of Fatou's lemma and the monotone and dominated convergence theorems for the Kronecker flow.
\end{abstract}
\thanks{The author is supported by Grant 354537 of the Research Council of Norway.}
\maketitle
\section{Introduction}\label{section:introduction}
Before going into the details, let us outline our contributions. The main results of this paper are canonical versions of the Littlewood--Paley formula for the Hardy space of Dirichlet series $\sH^p$ with $1\leq p<\infty$ and the mean counting function for any $f\in\sH^p$. Up to this point, there have been two types of results in this direction: those that hold as an average over the vertical limit functions $f_\chi$, and those that hold under additional assumptions of uniform convergence. Our results improve on both of these by establishing versions that hold for $f_\chi$ for almost every $\chi$ without any additional assumptions on $f$. In this sense, our results are essentially optimal.

Let us now provide the details. For $1\leq p<\infty$, let $\sH^p$ denote the Hardy space of Dirichlet series, defined as the completion of the space of all Dirichlet polynomials $f(s)=\sum_{n=1}^N a_nn^{-s}$ in the norm
$$\norm{f}_{\sH^p}=\lim_{\sigma\downarrow0}M_p(f,\sigma),$$
where
$$M_p(f,\sigma)=\left(\lim_{T\to\infty}\frac{1}{2T}\int_{-T}^T\abs{f(\sigma+it)}^p\,\dd t\right)^{1/p}$$
are the vertical $p$-means of $f$. The existence of these limits is a consequence of the almost periodicity of Dirichlet polynomials and Hardy's convexity theorem for analytic almost periodic functions. The elements of $\sH^p$ are absolutely convergent Dirichlet series in the half-plane $\bC_{1/2}$ (e.g., \cite{queffelec_diophantine_2020}*{Theorem 6.5.9}), where we write $\bC_\kappa=\{s\in\bC:\Re s>\kappa\}$, and there exist elements of $\sH^p$ that have the line $\Re s=1/2$ as their natural boundary, meaning they cannot be analytically continued to any larger domain (e.g., \cite{queffelec_diophantine_2020}*{Theorem 8.4.4}). This leads to an unexpected interplay between the half-plane $\bC_0$, which we used to define the norm on $\sH^p$, and the half-plane $\bC_{1/2}$, where the functions in $\sH^p$ are defined.

Important to the study of Dirichlet series is their almost periodicity. A Dirichlet series
$$f(s)=\sum_{n=1}^\infty a_nn^{-s}$$
defines an almost periodic function in any half-plane $\bC_\kappa$ where it converges uniformly, and as such it holds that any sequence of vertical translations $f(\,\cdot\,+i\tau)$ with $\tau\in\bR$ has a subsequence that converges uniformly on $\bC_\kappa$ (e.g., \cite{besicovitch_almost_1955}). Since the Dirichlet series of any $f\in\sH^p$ converges absolutely on $\bC_{1/2}$, it also converges uniformly on $\bC_{1/2+\varepsilon}$ for all $\varepsilon>0$, so in particular, $f$ is almost periodic in any such half-plane.

The functions obtained as uniform (on half-planes) limits of vertical translations of $f$ are called the \emph{vertical limit functions} of $f$. A consequence of Kronecker's theorem (e.g., \cite{queffelec_diophantine_2020}*{Theorem 2.2.4}) is that the vertical limit functions of $f$ are precisely the Dirichlet series of the form
$$f_\chi(s)=\sum_{n=1}^\infty a_n\chi(n)n^{-s}$$
where $\chi:\bZ^+\to\bT$ is a completely multiplicative function (e.g., \cite{queffelec_diophantine_2020}*{Proposition 8.4.1}); here $\bT$ denotes the unit circle in the complex plane and $\bZ^+$ the positive integers. We can identify the completely multiplicative functions $\chi:\bZ^+\to\bT$ with the infinite-dimensional torus $\bT^\infty=\bT\times\bT\times\cdots$ by using that $\chi$ is uniquely determined by its values at the primes; that is, $\chi$ corresponds to $(\chi(2),\chi(3),\chi(5),\dots)\in\bT^\infty$, and by the fundamental theorem of arithmetic this identification is bijective. The infinite-dimensional torus $\bT^\infty$ is a compact abelian group, and hence has a unique normalized Haar measure $m_\infty$, which is the product measure of countably many copies of the normalized arclength measure $m$ on $\bT$. This allows us to consider properties that hold for $f_\chi$ for almost every $\chi\in\bT^\infty$.

It is well-known that if $f\in\sH^p$, then $f_\chi$ has an analytic continuation to $\bC_0$ for almost every $\chi\in\bT^\infty$, and there exist several different proofs of this (see, e.g., \cite{helson_compact_1969} for the case $p=2$ and \cite{bayart_hardy_2002}*{Theorem 6} for the general case). In \cite{brevig_carlsons_2025}, Brevig and Kouroupis showed that if $f\in\sH^p$, then for almost every $\chi\in\bT^\infty$ it is also the case that the vertical $p$-mean $M_p(f_\chi,\sigma)$ exists for all $\sigma>0$, defines a decreasing logarithmically convex function in $\sigma$, and can be used to compute the norm of $f$ as
$$\norm{f}_{\sH^p}=\lim_{\sigma\downarrow0}M_p(f_\chi,\sigma).$$
We push this further by showing that the norm of $f$ also satisfies a Littlewood--Paley identity in terms of $f_\chi$ for almost every $\chi\in\bT^\infty$. We write $f(+\infty)$ for the constant term $a_1$ in the Dirichlet series of $f$.

\begin{theorem}[Littlewood--Paley formula for $\sH^p$]\label{theorem:littlewood-paley-hp}
    Let $f\in\sH^p$ with $1\leq p<\infty$. Then
    $$\norm{f}_{\sH^p}^p=\abs{f(+\infty)}^p+\lim_{T\to\infty}\frac{p^2}{2T}\int_{-T}^T\int_0^\infty\abs{f_\chi(\sigma+it)}^{p-2}\abs{f_\chi'(\sigma+it)}^2\sigma\,\dd\sigma\,\dd t$$
    for almost every $\chi\in\bT^\infty$.
\end{theorem}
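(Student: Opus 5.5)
The plan is to prove the formula through a Hardy--Stein identity for the vertical $p$-means of $f_\chi$, following the strategy announced in the abstract. Fix a generic $\chi$, namely one for which the result of Brevig and Kouroupis applies: $f_\chi$ extends analytically to $\bC_0$, $M_p(f_\chi,\sigma)$ exists for every $\sigma>0$, and $M_p(f_\chi,\sigma)\to\norm{f}_{\sH^p}$ as $\sigma\downarrow0$. For $g$ analytic, $\Delta\abs{g}^p=p^2\abs{g}^{p-2}\abs{g'}^2$. Writing $u(\sigma,t)=\abs{f_\chi(\sigma+it)}^p$ and $\phi(\sigma)=M_p(f_\chi,\sigma)^p$, the first step is to show that $\phi$ is twice differentiable on $(0,\infty)$ with
\begin{equation*}
\phi''(\sigma)=\lim_{T\to\infty}\frac{p^2}{2T}\int_{-T}^T\abs{f_\chi(\sigma+it)}^{p-2}\abs{f_\chi'(\sigma+it)}^2\,\dd t .
\end{equation*}
On half-planes $\bC_{\kappa}$ with $\kappa>1/2$ this is classical: $f_\chi$ is uniformly almost periodic there, so the Bohr means of $u$, of $\partial_\sigma u$ and of $\Delta u$ all exist uniformly in $\sigma$. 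Moreover $\partial_t^2u$ has Bohr mean zero, since it is a $t$-derivative of a bounded almost periodic function, and the identity follows by differentiating under the mean.

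The main obstacle is the strip $0<\sigma\le1/2$, where $f_\chi$ need not be almost periodic and the means are only almost-everywhere objects. I intend to handle this by the ergodic route. By Birkhoff's theorem for the Kronecker flow $\chi\mapsto\chi\cdot(p_j^{-i\tau})_j$, the horizontal means of $F(\chi)=\abs{f_\chi(\sigma)}^{p-2}\abs{f_\chi'(\sigma)}^2$ converge to $\int_{\bT^\infty}F\,\dd m_\infty$ for a.e.\ $\chi$. This holds first for a countable dense set of $\sigma$, and is then extended to all $\sigma$ by monotonicity and continuity arguments; these are the ergodic monotone/dominated convergence and Fatou statements mentioned in the abstract. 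One also needs to know that $\int F\,\dd m_\infty$ is the second derivative of $\sigma\mapsto\norm{f_\sigma}_{\sH^p}^p$, where $f_\sigma(s)=f(s+\sigma)$. This is the averaged Hardy--Stein identity: apply the one-variable Hardy--Stein identity on each circle in $\bT^\infty$ for polynomials, then pass to $\sH^p$ by approximation, using the fact that $f_\sigma$ lies in $\sH^\infty$-type classes for $\sigma>0$. Consequently, for a.e.\ $\chi$ and all $\sigma>0$, the horizontal mean in the theorem at height $\sigma$ equals $\Phi''(\sigma)$, where $\Phi(\sigma)=\norm{f_\sigma}_{\sH^p}^p=\phi(\sigma)$.

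Given this, the formula reduces to one-variable calculus. First, $\Phi$ is convex and decreasing, and $\Phi(\sigma)\to\abs{f(+\infty)}^p$ as $\sigma\to\infty$ because $f_\sigma\to a_1$ in $\sH^p$; in fact the convergence is exponentially fast, with $\Phi'(\sigma)=O(2^{-\sigma})$. Integrating by parts twice gives
\begin{equation*}
\int_0^\infty\Phi''(\sigma)\sigma\,\dd\sigma=\Phi(0^+)-\Phi(\infty)=\norm{f}_{\sH^p}^p-\abs{f(+\infty)}^p .
\end{equation*}
The boundary term $\sigma\Phi'(\sigma)$ vanishes at $0$ by convexity and monotonicity (one uses $\sigma\abs{\Phi'(\sigma)}\le\Phi(\sigma/2)-\Phi(\sigma)\to0$) and at $\infty$ by the decay.

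Finally, the limit in $T$ must be interchanged with the $\sigma$-integral. The $\sigma$-integrand is nonnegative, so Tonelli lets us write the expression in the theorem as $\lim_T\int_0^\infty G_T(\sigma)\sigma\,\dd\sigma$, where $G_T\to\Phi''$ pointwise. Fatou then gives the inequality $\ge$ immediately. For the reverse inequality I would use the exact finite-$T$ Green identity. Integrating $\sigma\,\Delta u$ over the rectangle $(0,\infty)\times(-T,T)$ gives
\begin{equation*}
\frac1{2T}\int_{-T}^T\bigl(u(0^+,t)-\abs{a_1}^p\bigr)\dd t
\end{equation*}
plus lateral boundary terms of order $T^{-1}\int_0^\infty\sigma\abs{\partial_t u}\,\dd\sigma$. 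I would attempt to show that these are $o(1)$ for a.e.\ $\chi$ via the ergodic theorem again, applied to $\int_0^\infty\sigma\abs{\partial_tu}\,\dd\sigma$ as a function on $\bT^\infty$ (which should be integrable by the averaged Littlewood--Paley estimate and Cauchy--Schwarz). The rectangle must be truncated at small $\sigma=\delta$ to avoid needing boundary values, with $\delta\to0$ taken after $T\to\infty$ using monotone convergence. This uniformity near $\sigma=0$ is the delicate point.
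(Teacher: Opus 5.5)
There is a genuine gap, and it sits exactly at the point you call delicate. Write $A_\delta(T)=\frac{p^2}{2T}\int_{-T}^T\int_\delta^\infty\abs{f_\chi(\sigma+it)}^{p-2}\abs{f_\chi'(\sigma+it)}^2(\sigma-\delta)\,\dd\sigma\,\dd t$. Green's identity on $[\delta,\infty)\times[-T,T]$, with the lateral terms controlled by the order bound $\mu_{f_\chi}(\kappa)<1/p$, gives $\lim_{T\to\infty}A_\delta(T)=M_p^p(f_\chi,\delta)-\abs{f(+\infty)}^p$ for every $\chi\in\sC_p(f)$. That is only the weak formula of \cref{corollary:weak-littlewood-paley-hp}, with $\lim_{\delta\downarrow0}$ outside $\lim_{T\to\infty}$. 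The content of the theorem is the interchange of these two limits, i.e.\ $\limsup_{T}A_0(T)\le\norm{f}_{\sH^p}^p-\abs{f(+\infty)}^p$. Ordinary monotone convergence does not give this along a single orbit.

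In fact it cannot follow from the properties of $\chi$ you fix at the outset (analytic continuation, existence of $M_p(f_\chi,\sigma)$ and its convergence to $\norm{f}_{\sH^p}$). These hold for every $\chi\in\sC_p(f)$, whereas the formula can fail for such $\chi$. If $f$ is bounded on $\bC_0$, the Cauchy estimate $\sigma\abs{f'(\sigma+it)}\lesssim\sup_{\bC_0}\abs{f}$ makes the lateral terms $O(1/T)$ on the full rectangle. Hence $A_0(T)=\frac1{2T}\int_{-T}^T\abs{f(it)}^p\,\dd t-\abs{f(+\infty)}^p+O(1/T)$, and the Saksman--Seip function recalled in \cref{section:preliminaries} violates the theorem at $\chi=(1,1,1,\dots)\in\sC_p(f)$. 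Your suggestion for the lateral terms does not repair this either: the ergodic theorem controls time averages $\frac1{2T}\int_{-T}^Th(\chi\fp^{-it})\,\dd t$, not the single values $T^{-1}h(\chi\fp^{\mp iT})$ that occur in Green's identity.

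The missing idea is to apply the ergodic theorem after integrating in $\sigma$, instead of at each fixed $\sigma$. Let $F(\chi)=p^2\int_0^\infty\abs{f_\chi(\sigma)}^{p-2}\abs{f_\chi'(\sigma)}^2\sigma\,\dd\sigma$ for $\chi\in\sC_p(f)$ (and $F=0$ elsewhere), and let $F_n$ be the same with weight $\sigma-1/n$ on $(1/n,\infty)$. Since $f_{\chi\fp^{-it}}=f_\chi(\,\cdot\,+it)$ and $\sC_p(f)$ is invariant under the flow, the expression in the theorem is exactly $\lim_{T}\frac1{2T}\int_{-T}^TF(\chi\fp^{-it})\,\dd t$. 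By \cref{lemma:non-negative-ergodic-theorem} this equals $\int_{\bT^\infty}F\,\dd m_\infty$ for almost every $\chi$. This settles both inequalities at once, and no uniformity near $\sigma=0$ along an orbit is ever needed.

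To evaluate the integral, the ergodic theorem for $F_n$ together with the weak formula gives $\int F_n\,\dd m_\infty=M_p^p(f_\chi,1/n)-\abs{f(+\infty)}^p$ for almost every $\chi$. Monotone convergence on $\bT^\infty$ (not along the orbit) then gives $\int F\,\dd m_\infty=\norm{f}_{\sH^p}^p-\abs{f(+\infty)}^p$. This is precisely \cref{theorem:ergodic-monotone-convergence}, and it is how the paper concludes. Your averaged second-derivative identity plus the integration by parts would also compute $\int F\,\dd m_\infty$ via Tonelli, but it needs more justification; note that $f(\,\cdot\,+\sigma)$ is in general unbounded for $\sigma<1/2$.

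Your first step then becomes unnecessary, and as stated it is false for $p=1$. If the line $\Re s=\sigma$ contains a simple zero of $f_\chi$, then $\abs{f_\chi}^{-1}\abs{f_\chi'}^2$ is not integrable in $t$ on that line. Fatou would only need almost every $\sigma$, which Fubini on $\bT^\infty\times(0,\infty)$ provides. The lower bound needs no Fatou at all: $A_0(T)\ge A_\delta(T)$ and the weak formula give $\liminf_{T}A_0(T)\ge\norm{f}_{\sH^p}^p-\abs{f(+\infty)}^p$ for every $\chi\in\sC_p(f)$.
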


This provides a significant improvement on previous Littlewood--Paley formulas, and should be compared with \cite{bayart_approximation_2016}*{Theorem 5.1} and \cite{brevig_almost_2025}*{Corollary 1.2, Theorem 1.4}. The result of the former provides a Littlewood--Paley formula in terms of an average over $\bT^\infty$, whereas the result of the latter shows the formula in \cref{theorem:littlewood-paley-hp} under the additional assumption that the Dirichlet series of $f$ converges uniformly on $\bC_\kappa$ for all $\kappa>0$.

In contrast to how the Littlewood--Paley formula in \cite{brevig_almost_2025} was established, we do not need a full Hardy--Stein identity for the vertical $p$-means to prove \cref{theorem:littlewood-paley-hp}; the idea is to integrate after an application of Green's theorem but before establishing a full such identity. This yields a version of the Littlewood--Paley formula with an additional limit taken after the limit in $T$ (see \cref{corollary:weak-littlewood-paley-hp} below), and so to obtain \cref{theorem:littlewood-paley-hp}, we use an ergodic version of the monotone convergence theorem (see \cref{theorem:ergodic-monotone-convergence} below). Although an easy consequence of the ergodic theorem and the monotone convergence theorem, this ergodic version of the monotone convergence theorem proves useful at several points in our arguments, and we believe ergodic versions of the classical integral theorems could provide a useful tool in the further study of $\sH^p$. Due to its independent interest, we do also prove a Hardy--Stein identity for the vertical limit functions of $f$ (see \cref{theorem:hardy-stein-hp} below). In addition to the application of Green's theorem used to establish the Littlewood--Paley formula, the proof will also use that $f$ can be approximated by its Riesz means in a sufficiently strong sense.

Previous Littlewood--Paley formulas have been extensively used in the study of operator theory on $\sH^p$, for example in the study of Volterra operators \cite{brevig_volterra_2019} and in the study of composition operators \cites{brevig_mean_2021,kouroupis_composition_2023,bayart_approximation_2016,bayart_counting_2024}. Motivated by this, we also apply our general scheme to the mean counting function, which plays a central role in the study of compactness of composition operators on $\sH^2$. For an analytic function $f$ in $\bC_0$, its \emph{mean counting function} is defined by
\begin{equation}\label{eq:mean-counting}
\sM(f,\zeta)=\lim_{\sigma_0\downarrow0}\lim_{T\to\infty}\frac{\pi}{T}\sum_{\substack{s\in\sZ(f-\zeta)\\\abs{\Im s}<T\\\Re s>\sigma_0}}(\Re s-\sigma_0)
\end{equation}
for all $\zeta\in\bC\setminus\{f(+\infty)\}$ such that it exists, where we write $\sZ(g)$ for the zero set of $g$, and repeat all terms by the multiplicity of the corresponding zeros. In the context of Dirichlet series, the mean counting function plays the role of the Nevanlinna counting function for analytic functions on the disk. In \cite{brevig_mean_2021}*{Theorem 6.4}, Brevig and Perfekt established the existence of $\sM(f,\zeta)$ when $f\in\sH^p$ and its Dirichlet series converges uniformly on $\bC_\kappa$ for all $\kappa>0$, and furthermore $\sM(f,\zeta)=\sM(f_\chi,\zeta)$ for all $\chi\in\bT^\infty$. They then used the mean counting function to give a complete characterization of the compact composition operators on $\sH^2$ with zero characteristic (see \cite{brevig_mean_2021}*{Theorem 1.4}). It should be stressed that the crucial difference between this setting and our setting is that the functions they consider are bounded and almost periodic on $\bC_\kappa$ for all $\kappa>0$; this plays an important role in their proof of this result.

Motivated by the invariance of the mean counting function under vertical limits, we prove that if $f\in\sH^p$, then there is a set $E\subseteq\bT^\infty$ of full measure such that $\sM(f_\chi,\zeta)$ exists for all $\chi\in E$ and all $\zeta\in\bC\setminus\{f(+\infty)\}$, with its value independent of $\chi\in E$ (see \cref{theorem:existence-of-mean-counting-function} below for the precise statement). As $f$ is not necessarily even defined on all of $\bC_0$, we define $\sM(f,\zeta):=\sM(f_\chi,\zeta)$ for $\chi\in E$ and $\zeta\in\bC\setminus\{f(+\infty)\}$. By employing again our ergodic version of the monotone convergence theorem, we show in particular that one may interchange the limits in $\sM(f,\zeta)$ for almost every $\chi\in\bT^\infty$ (depending on $\zeta$).

\begin{theorem}\label{theorem:mean-counting-limit-interchange}
    Let $f\in\sH^p$ with $1\leq p<\infty$ and let $\zeta\in\bC\setminus\{f(+\infty)\}$. Then
    $$\sM(f,\zeta)=\lim_{T\to\infty}\frac{\pi}{T}\sum_{\substack{s\in\sZ(f_\chi-\zeta)\\\abs{\Im s}<T\\\Re s>0}}\Re s$$
    for almost every $\chi\in\bT^\infty$.
\end{theorem}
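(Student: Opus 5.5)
The plan is to interchange the two limits in \eqref{eq:mean-counting} by monotone convergence in $\sigma_0$. The interchange is carried out along the Kronecker flow on $\bT^\infty$ through the ergodic monotone convergence theorem, which the introduction announces. For each $\sigma_0>0$ and $\chi$, write
$$\Phi_{\sigma_0}(\chi)=\sum_{\substack{s\in\sZ(f_\chi-\zeta)\\\abs{\Im s}<1/2\\\Re s>\sigma_0}}(\Re s-\sigma_0).$$
Let $T_t\chi=(\chi(n)n^{-it})$ denote the Kronecker flow. Then $f_{T_t\chi}(s)=f_\chi(s+it)$, so the zeros of $f_{T_t\chi}-\zeta$ are those of $f_\chi-\zeta$ translated vertically by $-t$. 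Consequently the sum over $\abs{\Im s}<T$, $\Re s>\sigma_0$ equals $\int_{-T}^{T}\Phi_{\sigma_0}(T_t\chi)\,\dd t$ up to boundary contributions from $\abs{\Im s}$ near $T$. A cleaner route is to use a strip of unit height and integrate in $t$, which distributes each zero with total weight one. The quantity in \eqref{eq:mean-counting} is then
$$\lim_{T\to\infty}\frac{\pi}{T}\int_{-T}^T\Phi_{\sigma_0}(T_t\chi)\,\dd t$$
up to edge terms, which are negligible once one knows the contribution of a unit-height window is $o(T)$. That estimate should follow from the same ergodic averaging applied to an integrable majorant. As $\sigma_0\downarrow0$, the functions $\Phi_{\sigma_0}$ increase pointwise to
$$\Phi_0(\chi)=\sum_{\substack{s\in\sZ(f_\chi-\zeta)\\\abs{\Im s}<1/2\\\Re s>0}}\Re s,$$
because each term $(\Re s-\sigma_0)^+$ is increasing.

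The key steps are as follows.
\begin{enumerate}[(i)]
\item Check measurability of $\Phi_{\sigma_0}$ on $\bT^\infty$. This comes from counting zeros via the argument principle and the continuity of $\chi\mapsto f_\chi$ on compact subsets of $\bC_{1/2}$, extended to $\bC_0$ by approximation.
\item Show integrability of $\Phi_{\sigma_0}$, and for $\Phi_0$ the bound $\int\Phi_0\,\dd m_\infty<\infty$. I would obtain both from a Jensen-type formula on the rectangle or strip, in the spirit of Littlewood's lemma: $\int_{\bT^\infty}\Phi_{\sigma_0}\,\dd m_\infty$ is controlled by $\int_{\bT^\infty}\log\abs{f_\chi(\sigma_0)-\zeta}\,\dd m_\infty(\chi)-\log\abs{f(+\infty)-\zeta}$. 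This is bounded by $\log^+$ of the $\sH^p$ norm, uniformly in $\sigma_0$, through the Brevig--Kouroupis description of $M_p(f_\chi,\sigma)$.
\item Apply the ergodic monotone convergence theorem to the increasing family $\Phi_{\sigma_0}\uparrow\Phi_0$ along a sequence $\sigma_0=1/k$. For almost every $\chi$ this gives
$$\lim_{k\to\infty}\lim_{T\to\infty}\frac{\pi}{T}\int_{-T}^T\Phi_{1/k}(T_t\chi)\,\dd t=\lim_{T\to\infty}\frac{\pi}{T}\int_{-T}^T\Phi_0(T_t\chi)\,\dd t.$$
Monotonicity in $\sigma_0$ then upgrades the limit over the sequence to the full limit.
\item Identify the left-hand side with $\sM(f,\zeta)$ using the existence result and its $\chi$-independence on the full-measure set $E$. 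The right-hand side is the expression in the theorem.
\end{enumerate}

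I expect step (ii) to be the main obstacle: making the Jensen formula rigorous near $\Re s=0$ when $f_\chi$ is merely in a Hardy-type class on $\bC_0$ for almost every $\chi$. This requires controlling $\log\abs{f_\chi-\zeta}$ on the line $\Re s=\sigma_0$ uniformly as $\sigma_0\downarrow0$. I would first try to reduce to the averaged Jensen formula over $\bT^\infty$, where Fubini makes vertical means become integrals over the torus. I would then use $\log\abs{w}\le\frac1p\abs{w}^p$ together with boundedness of $M_p(f_\chi,\sigma)$ by $\norm{f}_{\sH^p}$.
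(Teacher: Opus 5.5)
Your plan is essentially the paper's proof: the paper applies the ergodic monotone convergence theorem to the windowed zero sums over $\abs{\Im s}<1$, $\Re s>1/n$, and then identifies the two sides with $\sM(f,\zeta)$ and with the sum in the theorem, using the existence result and an elementary sandwich between windowed averages and sums over $\abs{\Im s}<T$. Your step (ii), which you flag as the main obstacle, is unnecessary: the ergodic monotone convergence theorem is proved for arbitrary $[0,\infty]$-valued measurable functions, finiteness comes from the existence of $\sM(f,\zeta)$, and the edge terms need no majorant because $S(T-\tfrac12)\le\int_{-T}^{T}\Phi_{\sigma_0}(T_t\chi)\,\dd t\le S(T+\tfrac12)$, where $S(T)$ is the sum over $\abs{\Im s}<T$, $\Re s>\sigma_0$.
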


This result should be compared to \cite{kouroupis_composition_2023}*{Theorem 4.9} where Kouroupis and Perfekt showed the corresponding statement for the weighted mean counting function under the additional assumption that the Dirichlet series of $f$ converges uniformly on $\bC_\kappa$ for all $\kappa>0$.

The idea behind the proof of the existence of the mean counting function is to first prove that the \emph{Jessen function}
$$\sJ(f_\chi,\sigma)=\lim_{T\to\infty}\frac{1}{2T}\int_{-T}^T\log\abs{f_\chi(\sigma+it)}\,\dd t$$
exists and is decreasing and convex for almost every $\chi\in\bT^\infty$ by adapting an argument of Borchsenius and Jessen \cite{borchsenius_mean_1948} (see \cref{theorem:existence-of-jessen-function} below). This should be compared with the integral
$$\int_{\bT^\infty}\log\abs{f_\chi(\sigma)}\,\dd m_\infty(\chi)$$
for $f\in\sH^p$ and $\sigma>0$ considered by Brevig and Perfekt in \cite{brevig_mean_2021}, which they took as the definition of the Jessen function of $f\in\sH^p$ to deal with the behavior in the strip $0<\Re s\leq1/2$. They furthermore used the ergodic theorem to show that the two definitions agree whenever the Dirichlet series of $f$ converges uniformly (see \cite{brevig_mean_2021}*{Lemma 4.1}), and by a similar application we see that our Jessen function also agrees with theirs. The strength of our result is that we find a set of $\chi\in\bT^\infty$ of full measure for which the Jessen function exists for all $\sigma>0$, whereas if one applies the ergodic theorem directly to their integral, the resulting set depends on $\sigma$.

Once the existence of the Jessen function has been established, we proceed via Littlewood's argument principle in the same way as Brevig and Perfekt used it in \cite{brevig_mean_2021}*{Lemma 6.1} (see also \cite{brevig_almost_2025}*{Theorem 5.3}) to establish a version of Jensen's formula (see \cref{theorem:jensen-formula} below). The existence of the mean counting function $\sM(f_\chi,\zeta)$ in the sense of \cref{eq:mean-counting} is then a consequence of this, and by an ergodic argument \cref{theorem:mean-counting-limit-interchange} then follows. It should be stressed that the set of full measure in \cref{theorem:mean-counting-limit-interchange} depends on $\zeta\in\bC\setminus\{f(+\infty)\}$; however, for the existence of $\sM(f_\chi,\zeta)$ in the sense of \cref{eq:mean-counting}, we can find one such set that works for all choices of $\zeta\in\bC\setminus\{f(+\infty)\}$.

The independence of $\chi$ in the mean counting function is a statement about the average behavior of the distribution of the values of $f$ in $\bC_0$ in a generalized sense, even when $f$ itself cannot necessarily be analytically continued past $\Re s=1/2$. This phenomenon---that almost every vertical limit has a well-controlled value distribution---should be compared with a result of Helson \cite{helson_compact_1969} (see also \cite{hedenmalm_hilbert_1997}*{Corollary 4.7}), stating that for almost every $\chi\in\bT^\infty$, the Riemann hypothesis holds for $\zeta_\chi$, where $\zeta(s)=\sum_{n=1}^\infty n^{-s}$ denotes the Riemann zeta function.

\subsection*{Acknowledgments.} I would like to thank my PhD supervisor, Ole Fredrik Brevig, for his many helpful suggestions and feedback in the writing of this paper.

\subsection*{Organization.} The remainder of this paper is divided into five sections. In \cref{section:preliminaries}, we go over some preliminary material and prove some preliminary lemmas. In particular, we introduce the main tools that will be used throughout the paper: the connection between $\sH^p$ and the Hardy space $H^p(\bT^\infty)$, Riesz means of a Dirichlet series, and a short discussion of almost periodicity. We also introduce in this section what we call the $p$-Carlson set, which will play a prominent role in our results. In \cref{section:ergodic-integral-theorems}, we prove ergodic versions of the monotone and dominated convergence theorems for the Kronecker flow, as well as an ergodic version of Fatou's lemma. Next, \cref{section:zeros-and-jessen} discusses the zero behavior of Dirichlet series by an argument of Borchsenius and Jessen, and finishes with a proof of the existence of the Jessen function for certain analytic continuations of $\sH^p$-functions. In \cref{section:hardy-stein-littlewood-paley}, we establish a Hardy--Stein identity for $\sH^p$ and use this to prove \cref{theorem:littlewood-paley-hp}. Finally, in \cref{section:jensens-formula-and-mean-counting-function} we prove a version of Jensen's formula for the Jessen function and use this to prove the existence of the mean counting function and \cref{theorem:mean-counting-limit-interchange}.

\begin{figure}[ht]
	\centering
	\begin{tikzcd}[row sep=normal,column sep=large]
    & \labelcref{section:ergodic-integral-theorems}\arrow[to=1-4]\arrow[to=3-4] & & \labelcref{section:jensens-formula-and-mean-counting-function} \\
    \labelcref{section:introduction}\arrow[dr, bend right]\arrow[ur, bend left] & & & \\
    & \labelcref{section:preliminaries} \arrow[r] & \labelcref{section:zeros-and-jessen}\arrow[to=1-4,bend right] \arrow[r] & \labelcref{section:hardy-stein-littlewood-paley}
	\end{tikzcd}
	\caption{Dependence between the sections of the paper.}
\end{figure}

\section{Preliminaries}\label{section:preliminaries}

Recall how we identify $\bT^\infty$ with the completely multiplicative functions $\chi:\bZ^+\to\bT$ by their values on the primes. Any such completely multiplicative function uniquely extends to a completely multiplicative function $\chi:\bQ^+\to\bT$, and these functions form the dual group of the positive rationals $\bQ^+$ under the discrete topology. By identifying $\chi\in\bT^\infty$ with its corresponding character $\chi:\bQ^+\to\bT$, we obtain an isomorphism of topological groups, and as such we can compute the Fourier coefficients of a function $f^*\in L^p(\bT^\infty)$ as
$$\widehat{f^*}(q)=\int_{\bT^\infty}f^*(\chi)\overline{\chi(q)}\,\dd m_\infty(\chi)$$
for $q\in\bQ^+$. The Hardy spaces of $\bT^\infty$ are defined as
$$H^p(\bT^\infty)=\{f^*\in L^p(\bT^\infty):\widehat{f^*}(q)=0\text{ for all }q\in\bQ^+\setminus\bZ^+\}.$$
One of the most important tools when studying $\sH^p$ is that it can be isometrically identified with $H^p(\bT^\infty)$ through an idea going back to Harald Bohr (see \cite{bohr_uber_1913_2}). Bohr's idea is to consider the correspondence
$$f(s)=\sum_{n=1}^N a_nn^{-s}\quad\longleftrightarrow\quad f^*(\chi)=\sum_{n=1}^Na_n\chi(n)$$
between Dirichlet polynomials and polynomials on $\bT^\infty$, which is a bijective linear correspondence. Consider the \emph{Kronecker flow}
$$\fp^{-it}=(2^{-it},3^{-it},5^{-it},\dots)$$
for $t\in\bR$, and observe that, plainly, $f(it)=f^*(\fp^{-it})$. The family $\{\fp^{-it}\}_{t\in\bR}$ defines an ergodic flow on $\bT^\infty$ with respect to $m_\infty$, and so by the ergodic theorem (e.g., \cite{queffelec_diophantine_2020}*{Theorem 6.5.1}) it holds that
\begin{equation}\label{eq:ergodic-theorem-polynomials}
    \norm{f}_{\sH^p}^p=\lim_{T\to\infty}\frac{1}{2T}\int_{-T}^T\abs{f(it)}^p\,\dd t=\int_{\bT^\infty}\abs{f^*(\chi)}^p\,\dd m_\infty(\chi)=\norm{f^*}_{L^p(\bT^\infty)}^p.
\end{equation}
In particular, the correspondence $f\mapsto f^*$ is isometric, and so as the polynomials are dense in $H^p(\bT^\infty)$, this extends to an isometric isomorphism between $\sH^p$ and $H^p(\bT^\infty)$, often called the \emph{Bohr lift}. We will write $f^*$ for the Bohr lift of $f\in\sH^p$.

The application of the ergodic theorem in \cref{eq:ergodic-theorem-polynomials} relies crucially on the fact that $f^*$ is continuous on $\bT^\infty$ (indeed it is a polynomial). For general $f^*\in L^p(\bT^\infty)$, we can only infer from the ergodic theorem that
$$\norm{f^*}_{L^p(\bT^\infty)}^p=\lim_{T\to\infty}\frac{1}{2T}\int_{-T}^T\abs{f^*(\chi\fp^{-it})}^p\,\dd t$$
for almost every $\chi\in\bT^\infty$. Indeed it should be noted that even if $f$ is a Dirichlet series with bounded analytic continuation to $\bC_0$, the first limit in \cref{eq:ergodic-theorem-polynomials} may not even exist, and even when it does it may not equal the $\sH^p$-norm of $f$ (see \cite{saksman_integral_2009}*{Theorem 1}). Brevig and Kouroupis \cite{brevig_carlsons_2025} first observed that the set of $\chi$ for which the above equality holds plays an important role in the study of $\sH^p$ and gave the following definition.

\begin{definition}
    Let $f\in\sH^p$ with $1\leq p<\infty$. The set of all $\chi\in\bT^\infty$ for which the equality
    $$\norm{f}_{\sH^p}^p=\lim_{T\to\infty}\frac{1}{2T}\int_{-T}^T\abs{f^*(\chi\fp^{-it})}^p\,\dd t$$
    holds is called the \emph{ergodic set} of $f$, and we denote it by $\cE_p(f)$.
\end{definition}

The ergodic theorem ensures that if $f\in\sH^p$, then $\cE_p(f)$ has full measure, and if $f^*$ is continuous on $\bT^\infty$, or equivalently, if the Dirichlet series of $f$ converges pointwise and is uniformly continuous on $\bC_0$ (e.g., \cite{aron_dirichlet_2017}*{Theorem 2.3}), then $\cE_p(f)=\bT^\infty$.

In our scheme, the most important property of the ergodic set is the fact that if $\chi\in\cE_p(f)$, then
\begin{equation}\label{eq:carlson-condition}
\sup_{\sigma>0}\sup_{T\geq1}\frac{1}{2T}\int_{-T}^T\abs{f_\chi(\sigma+it)}^p\,\dd t<\infty
\end{equation}
by \cite{brevig_carlsons_2025}*{Theorem 2}. It is precisely this condition that will underpin our arguments. Intuitively, one should see \cref{eq:carlson-condition} as a replacement for boundedness in the strip $0<\Re\leq1/2$ which ensures sufficiently strong growth control for the function to be well-behaved with respect to vertical means. In general, however, the ergodic set is rather poorly behaved. For example, it is not necessarily the case that $\cE_p(f)\cap\cE_p(g)$ is contained in $\cE_p(f+g)$ even though it certainly is the case that if \cref{eq:carlson-condition} holds for $f$ and $g$, then it also holds for $f+g$. As such, we give the following definition.

\begin{definition}
    For a somewhere convergent Dirichlet series $f$ and $1\leq p<\infty$, we define the \emph{$p$-Carlson set} of $f$ as the set $\sC_p(f)$ of all $\chi\in\bT^\infty$ such that $f_\chi$ has an analytic continuation to $\bC_0$ satisfying
    $$\sup_{\sigma>0}\sup_{T\geq1}\frac{1}{2T}\int_{-T}^T\abs{f_\chi(\sigma+it)}^p\,\dd t<\infty.$$
\end{definition}

Considering the $p$-Carlson set instead of the ergodic set is rather natural in the sense that it captures precisely the behavior we want: a sufficiently strong replacement for boundedness in $0<\Re s\leq1/2$. As mentioned implicitly before, it holds that $\cE_p(f)\subseteq\sC_p(f)$, but in contrast to the ergodic set, the $p$-Carlson set enjoys better arithmetic properties. For example, it is clearly the case that
$$\sC_p(f)\cap\sC_p(g)\subseteq\sC_p(f+g)$$
and
$$\sC_{p_2}(f)\subseteq\sC_{p_1}(f)$$
whenever $p_1\leq p_2$. Furthermore, by combining \cite{brevig_carlsons_2025}*{Theorems 1-2}, we obtain the following characterization of $\sH^p$ in terms of $p$-Carlson sets.

\begin{theorem}\label{theorem:characterization-of-hp-carlson-set}
    Let $f$ be a somewhere convergent Dirichlet series and let $1\leq p<\infty$. The following are equivalent:
    \begin{enumerate}[(i)]
        \item $f\in\sH^p$,
        \item $\sC_p(f)$ is non-empty,
        \item $\sC_p(f)$ has full measure.
    \end{enumerate}
\end{theorem}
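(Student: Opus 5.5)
The plan is to derive the characterization directly from the two results of Brevig and Kouroupis quoted above, together with the inclusion $\cE_p(f)\subseteq\sC_p(f)$. The implications (iii)$\Rightarrow$(ii) and (i)$\Rightarrow$(iii) are essentially bookkeeping. For (iii)$\Rightarrow$(ii), a set of full measure in $\bT^\infty$ is nonempty, since $m_\infty$ is a probability measure. For (i)$\Rightarrow$(iii), take $f\in\sH^p$. By the ergodic theorem, applied to $\abs{f^*}^p\in L^1(\bT^\infty)$ along the Kronecker flow, the ergodic set $\cE_p(f)$ has full measure. Then \cite{brevig_carlsons_2025}*{Theorem 2} shows that for every $\chi\in\cE_p(f)$ the function $f_\chi$ extends analytically to $\bC_0$ and satisfies the uniform Carlson bound \cref{eq:carlson-condition}. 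Hence $\cE_p(f)\subseteq\sC_p(f)$, so $\sC_p(f)$ has full measure.

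The remaining implication, (ii)$\Rightarrow$(i), is the substantive one and is where I expect the main obstacle. Here I would invoke \cite{brevig_carlsons_2025}*{Theorem 1}. In the form I expect it to take, it says that if a somewhere convergent Dirichlet series $g$ has an analytic continuation to $\bC_0$ with
$$\sup_{\sigma>0}\sup_{T\geq1}\frac{1}{2T}\int_{-T}^T\abs{g(\sigma+it)}^p\,\dd t<\infty,$$
then $g\in\sH^p$. Given $\chi\in\sC_p(f)$, I would apply this to $g=f_\chi$ to conclude $f_\chi\in\sH^p$. To return to $f$, note that $f=(f_\chi)_{\overline{\chi}}$. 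The map $g\mapsto g_{\overline\chi}$ corresponds, under the Bohr lift, to translating $g^*$ by $\overline\chi$ on $\bT^\infty$. This translation is an isometry of $H^p(\bT^\infty)$, because Haar measure is translation invariant and the Fourier support stays in $\bZ^+$. Equivalently, it is an isometry of $\sH^p$ on Dirichlet polynomials, and it extends by density. Hence $f\in\sH^p$ with $\norm{f}_{\sH^p}=\norm{f_\chi}_{\sH^p}$.

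The delicate point is to check that the hypotheses of \cite{brevig_carlsons_2025}*{Theorem 1} match exactly the definition of $\sC_p(f)$. In particular, somewhere convergence of $f$ transfers to $f_\chi$, since $\abs{a_n\chi(n)}=\abs{a_n}$ gives the same abscissa of absolute convergence. The definition of $\sC_p(f)$ also asks for the bound with the supremum over all $\sigma>0$, not just on a half-plane of convergence. If that theorem is instead stated only for $\chi$ the identity character, the twisting argument above reduces the general case to it, which is why I would include it explicitly.
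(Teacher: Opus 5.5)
Your proposal is correct and is essentially the paper's argument. The paper obtains the characterization by combining Theorem 1 of Brevig--Kouroupis for (ii)$\Rightarrow$(i) with their Theorem 2 and the ergodic theorem for (i)$\Rightarrow$(iii), the latter giving that $\cE_p(f)$ has full measure and $\cE_p(f)\subseteq\sC_p(f)$; (iii)$\Rightarrow$(ii) is trivial, and your twisting step $f=(f_\chi)_{\overline\chi}$, with translation by $\overline\chi$ an isometry of $H^p(\bT^\infty)$, just makes explicit the rotation invariance of $\sH^p$ that the paper leaves implicit.
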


It is worth noting, however, that the ergodic set generally gives $f_\chi$ better boundary behavior than the $p$-Carlson set does. Indeed if $\chi\in\cE_p(f)$, then by \cite{brevig_carlsons_2025}*{Corollary 3} it is the case that
$$f_\chi(it):=\lim_{\sigma\downarrow0}f_\chi(\sigma+it)$$
exists for all $t\in\bR$ and can be used to compute the $\sH^p$-norm of $f$ as
\begin{equation}\label{eq:norm-computation-boundary}
\norm{f}_{\sH^p}=\left(\lim_{T\to\infty}\frac{1}{2T}\int_{-T}^T\abs{f_\chi(it)}^p\,\dd t\right)^{1/p}.
\end{equation}
This does not necessarily hold for $\chi\in\sC_p(f)$: for example, if $f$ has a bounded analytic continuation to $\bC_0$, then $\sC_p(f)=\bT^\infty$, but \cite{saksman_integral_2009}*{Theorem 1} of Saksman and Seip tells us that there exists such an $f$ for which \cref{eq:norm-computation-boundary} does not hold with $\chi=(1,1,1,\dots)$.

As we will be concerned with vertical limits by elements of the $p$-Carlson set, we shall start with some basic properties of Dirichlet series $f$ satisfying
$$\sup_{\sigma>0}\sup_{T\geq1}\frac{1}{2T}\int_{-T}^T\abs{f(\sigma+it)}^p\,\dd t<\infty.$$
The first such property will concern the order of such functions. Recall that if $f$ is an analytic function in a half-plane $\bC_\kappa$, then its \emph{order} on $\bC_\kappa$, denoted by $\mu_f(\kappa)$, is the infimum over all real numbers $\mu$ such that
$$f(\sigma+it)=O(\abs{t}^\mu)$$
uniformly for $\sigma>\kappa$ as $\abs{t}\to\infty$. The function $\mu_f$ is convex on $(\kappa,\infty)$, and in particular also continuous, and if $f$ is an absolutely convergent Dirichlet series on $\bC_\kappa$, then $\mu_f(\kappa)=0$ (see, e.g., \cite{hardy_general_1964}*{Theorem 15} and \cite{titchmarsh_theory_1958}*{Section 9.41}).

\begin{lemma}\label{lemma:order-strict-inequality}
    Let $1\leq p<\infty$ and let $f$ be a somewhere convergent Dirichlet series with analytic continuation to $\bC_0$ satisfying
    $$\sup_{\sigma>0}\sup_{T\geq1}\frac{1}{2T}\int_{-T}^T\abs{f(\sigma+it)}^p\,\dd t<\infty.$$
    Then $\mu_f(\kappa)<1/p$ for all $\kappa>0$.
\end{lemma}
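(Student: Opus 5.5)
We first show that $f(\sigma+it)=O(\abs{t}^{1/p})$ uniformly for $\sigma\geq\kappa$, and then upgrade this to a strict inequality $\mu_f(\kappa)<1/p$ using convexity of $\mu_f$.

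\textbf{Step 1: a pointwise bound from the mean condition.} Let $C$ denote the supremum in the hypothesis, and fix $\kappa>0$. Put $r=\kappa/2$ and take $s=\sigma+it$ with $\sigma\geq\kappa$ and $\abs{t}\geq 2$. The function $\abs{f}^p$ is subharmonic, so by the sub-mean value property on the disc $D(s,r)\subseteq\bC_{\kappa/2}$,
$$\abs{f(s)}^p\leq\frac{1}{\pi r^2}\int_{D(s,r)}\abs{f}^p\,\dd A\leq\frac{1}{\pi r^2}\int_{\sigma-r}^{\sigma+r}\int_{-\abs{t}-r}^{\abs{t}+r}\abs{f(u+iv)}^p\,\dd v\,\dd u.$$
For each $u>0$ the inner integral is at most $2C(\abs{t}+r)$. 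Hence $\abs{f(s)}^p\leq C'(\abs{t}+1)$ with $C'$ depending only on $\kappa$ and $C$. Therefore $f(\sigma+it)=O(\abs{t}^{1/p})$ uniformly on $\sigma\geq\kappa$. Since $\kappa>0$ was arbitrary, $\mu_f(\kappa)\leq1/p$ for every $\kappa>0$.

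\textbf{Step 2: strictness via convexity.} Since $f$ is a somewhere convergent Dirichlet series, it converges absolutely in some half-plane $\bC_{\sigma_a}$, so $\mu_f(\sigma_a)=0$. Given $\kappa>0$, apply Step 1 with $\kappa/2$ in place of $\kappa$ to get $\mu_f(\kappa/2)\leq1/p$. The function $\mu_f$ is convex on $(0,\infty)$; this is the standard Phragmén--Lindelöf three-lines argument, which applies because $f$ has finite order on every $\bC_\delta$ by Step 1.

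If $\kappa\geq\sigma_a$, then $\mu_f(\kappa)=0<1/p$, since $\mu_f$ is nonincreasing and $\mu_f(\sigma_a)=0$. Otherwise $\kappa/2<\kappa<\sigma_a$. Write $\kappa=\lambda\kappa/2+(1-\lambda)\sigma_a$ with $\lambda\in(0,1)$. Convexity then gives
$$\mu_f(\kappa)\leq\lambda\mu_f(\kappa/2)+(1-\lambda)\mu_f(\sigma_a)\leq\lambda/p<1/p.$$

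\textbf{Main obstacle.} The delicate point is justifying convexity of $\mu_f$ on the whole half-line $(0,\infty)$ rather than only on a region of absolute convergence. We need a Phragmén--Lindelöf argument in the strip $\kappa/2\leq\sigma\leq\sigma_a$. This requires an a priori polynomial growth bound on the strip, which is exactly what Step 1 provides. With that bound, the classical result (Titchmarsh, Section 9.41) applies directly.
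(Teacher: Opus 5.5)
Your proof is correct and follows the same route as the paper: first show $\mu_f(\kappa)\le 1/p$ for every $\kappa>0$, then obtain strict inequality from convexity of $\mu_f$ together with $\mu_f=0$ in a half-plane of absolute convergence. The only difference is that you prove the bound $\mu_f\le 1/p$ yourself, using the sub-mean value property of $\abs{f}^p$ on discs of radius $\kappa/2$, whereas the paper simply cites Lemma 7 of Brevig and Kouroupis for it.
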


\begin{proof}
    It follows from \cite{brevig_carlsons_2025}*{Lemma 7} that $\mu_f(\kappa)\leq1/p$ for all $\kappa>0$. The result now follows from the convexity of $\mu_f$ and the fact that $f$ is somewhere absolutely convergent (e.g., \cite{titchmarsh_theory_1958}*{Section 9.13}).
\end{proof}

This control of the order of $f$ will play a prominent role in our arguments. We also need order control on the derivative of $f$, and as such we prove that $f'$ satisfies a similar growth condition.

\begin{lemma}\label{lemma:carlson-derivative-inequality}
    Let $1\leq p<\infty$ and let $f$ be an analytic function in $\bC_0$. Then
    $$\sup_{\sigma>\kappa}\sup_{T\geq1}\frac{1}{2T}\int_{-T}^T\abs{f'(\sigma+it)}^p\,\dd t\leq\frac{1+\kappa}{\kappa^p}\sup_{\sigma>0}\sup_{T\geq1}\frac{1}{2T}\int_{-T}^T\abs{f(\sigma+it)}^p\,\dd t$$
    for all $\kappa>0$.
\end{lemma}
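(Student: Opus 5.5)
Fix $\kappa>0$ and write $C=\sup_{\sigma>0}\sup_{T\geq1}\frac{1}{2T}\int_{-T}^T\abs{f(\sigma+it)}^p\,\dd t$, which we may assume finite. The plan is to control $f'$ pointwise by the Cauchy integral formula on a small circle and then average. Take $\sigma>\kappa$. Since $\sigma-\kappa\geq0$, the disk of radius $r<\kappa$ centered at $s=\sigma+it$ lies in $\bC_0$. The Cauchy formula gives
$$f'(s)=\frac{1}{2\pi r}\int_0^{2\pi}f(s+re^{i\theta})e^{-i\theta}\,\dd\theta .$$
We cannot average a single circle in $t$ without problems, so we instead average in $r$ as well. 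The simplest version averages over the square $Q(s)=\{\sigma+x+i(t+y):\abs{x},\abs{y}<\kappa\}$. Integrating the Cauchy formula in $r\in(0,\kappa)$ against a suitable weight shows that $\abs{f'(s)}$ is bounded by a normalized area integral of $\abs{f}$ over a region of diameter comparable to $\kappa$. Applying Jensen's (or H\"older's) inequality, using $p\geq1$, then yields
$$\abs{f'(s)}^p\lesssim\kappa^{-p}\cdot\frac{1}{\kappa^2}\int_{\abs{x}<\kappa}\int_{\abs{y}<\kappa}\abs{f(\sigma+x+i(t+y))}^p\,\dd y\,\dd x .$$

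Next we integrate over $t\in(-T,T)$ and apply Fubini. For each fixed $x$ and $y$, the integral over $t$ of $\abs{f(\sigma+x+i(t+y))}^p$ equals the integral of $\abs{f(\sigma+x+it)}^p$ over $(-T+y,T+y)$. This interval is contained in $(-T-\kappa,T+\kappa)$, so the integral is at most $2(T+\kappa)C$. Here we use that $\sigma+x>0$ and that $T+\kappa\geq1$. Dividing by $2T$ with $T\geq1$ produces the factor
$$\frac{T+\kappa}{T}\leq1+\kappa,$$
and this explains where the $1+\kappa$ in the statement comes from.

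The main obstacle is the constant: we want exactly $\kappa^{-p}$, with no absolute multiplicative constant. To obtain it, one should avoid a crude area average. Instead, use a circle of radius $r$ close to $\kappa$. From the Cauchy formula and Jensen's inequality on the normalized circle measure, we get
$$\abs{f'(s)}^p\leq r^{-p}\int_0^{2\pi}\abs{f(s+re^{i\theta})}^p\frac{\dd\theta}{2\pi}.$$
Integrating in $t$ and using Fubini, each fixed $\theta$ contributes a vertical mean at abscissa $\sigma+r\cos\theta>0$, shifted by $r\sin\theta$ with $\abs{r\sin\theta}\leq r<\kappa$. Each such mean is bounded by $\frac{T+\kappa}{T}C\leq(1+\kappa)C$. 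This gives the bound $(1+\kappa)r^{-p}C$, and letting $r\uparrow\kappa$ gives exactly the stated constant $(1+\kappa)/\kappa^p$. One can in fact take $r=\kappa$ directly, since $\Re(s+\kappa e^{i\theta})\geq\sigma-\kappa>0$. Finally, we take the supremum over $\sigma>\kappa$ and $T\geq1$.
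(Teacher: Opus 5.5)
Your final argument is correct and is essentially the paper's proof: Cauchy's formula on the circle of radius $\kappa$, a convexity inequality, then Fubini and the inclusion of the shifted interval in $(-T-\kappa,T+\kappa)$, which gives the factor $(T+\kappa)/T\le 1+\kappa$. The only cosmetic differences are that you apply Jensen's inequality pointwise before integrating in $t$, whereas the paper applies Minkowski's integral inequality to the $L^p$-norm in $t$, and that the area-averaging detour in your first paragraph is unnecessary.
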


\begin{proof}
    Fix $\kappa>0$ and let $\sigma>\kappa$ and $T\geq 1$. Set
    $$C=\sup_{\sigma>0}\sup_{T\geq1}\frac{1}{2T}\int_{-T}^T\abs{f(\sigma+it)}^p\,\dd t$$
    and use Cauchy's integral formula together with Minkowski's integral inequality to estimate
    \begin{align*}
    \Biggl(\frac{1}{2T}\int_{-T}^T\abs{f'(\sigma+it)}^p&\,\dd t\Biggr)^{1/p} \\
        &=\left(\frac{1}{2T}\int_{-T}^T\Abs{\frac{1}{2\pi}\int_0^{2\pi}\frac{f(\sigma+it+\kappa e^{i\theta})}{\kappa e^{i\theta}}\,\dd\theta}^p\,\dd t\right)^{1/p} \\
        &\leq\frac{1}{2\pi\kappa}\int_0^{2\pi}\left(\frac{1}{2T}\int_{-T}^T\abs{f(\sigma+it+\kappa e^{i\theta})}^p\,\dd t\right)^{1/p}\,\dd\theta \\
        &\leq\frac{1}{2\pi\kappa}\int_0^{2\pi}\left(\frac{1+\kappa }{2(T+\kappa )}\int_{-T-\kappa}^{T+\kappa}\abs{f(\sigma+\kappa\cos\theta+i\tau)}^p\,\dd\tau\right)^{1/p}\,\dd\theta \\
        &\leq C^{1/p}\frac{(1+\kappa)^{1/p}}{\kappa }.
    \end{align*}
    The result now follows by raising both sides to the power of $p$ and taking suprema.
\end{proof}

A consequence of this lemma is also that if $f\in\sH^p$, then any horizontal translation $s\mapsto f'(s+\kappa)$ with $\kappa>0$ of the derivative of $f$ is also in $\sH^p$.

Our next tool is given by the Riesz means of a Dirichlet series. If $f(s)=\sum_{n=1}^\infty a_nn^{-s}$ is a Dirichlet series, then its \emph{Riesz means} are defined by
$$R_N^k f(s)=\sum_{n=1}^N a_n\left(1-\frac{\log n}{\log N}\right)^kn^{-s}$$
for $k>0$ and $N\geq2$. Their utility comes from the fact that they are Dirichlet polynomials---so in particular they are almost periodic functions on any half-plane---that approximate $f$ in a very strong sense. The starting point for our arguments is the following lemma, which can easily be deduced from the proofs of \cite{brevig_carlsons_2025}*{Theorems 1 and 9}.

\begin{lemma}\label{lemma:riesz-means-convergence}
    Let $1\leq p<\infty$ and let $f$ be a somewhere convergent Dirichlet series with analytic continuation to $\bC_0$ satisfying
    \begin{equation}\label{eq:carlson-in-derivative-lemma}
        \sup_{\sigma>0}\sup_{T\geq1}\frac{1}{2T}\int_{-T}^T\abs{f(\sigma+it)}^p\,\dd t<\infty.
    \end{equation}
    Then, for $k>3$, the following statements hold:
    \begin{enumerate}[(a), font=\normalfont]
        \item $R_N^kf\to f$ uniformly on any compact subset of $\bC_0$ as $N\to\infty$.
        \item If the Dirichlet series of $f$ converges uniformly on $\bC_\kappa$, then $R_N^kf\to f$ uniformly on $\bC_\kappa$ as $N\to\infty$.
        \item For all $\kappa>0$,
        $$\lim_{N\to\infty}\sup_{\sigma>\kappa}\sup_{T\geq1}\frac{1}{2T}\int_{-T}^T\abs{R_N^kf(\sigma+it)-f(\sigma+it)}^p\,\dd t=0.$$
    \end{enumerate}
\end{lemma}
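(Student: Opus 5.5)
The plan is to follow the classical route for Riesz means of Dirichlet series. We write $R_N^kf$ as a Perron-type contour integral of $f$ against a kernel that decays like $\abs{\Im w}^{-k-1}$, and then move the contour into $\bC_0$. The order bound of \cref{lemma:order-strict-inequality}, namely $\mu_f(\kappa)<1/p\le 1$, together with $k>3$, makes the relevant integrals converge absolutely.

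\textbf{Step 1: the integral representation.} For $s$ with $\Re s$ large we have
$$R_N^kf(s)=\frac{\Gamma(k+1)}{2\pi i}\int_{c-i\infty}^{c+i\infty}f(s+w)\frac{N^w}{w^{k+1}(\log N)^k}\,\dd w$$
for $c$ beyond the abscissa of absolute convergence. We then shift the line of integration to $\Re(s+w)=\kappa/2$, with $\kappa<\Re s$. The residue at $w=0$ equals $f(s)$. The difference therefore becomes
$$R_N^kf(s)-f(s)=\frac{\Gamma(k+1)}{2\pi i}\int_{\Re w=\kappa/2-\Re s}f(s+w)\frac{N^{w}}{w^{k+1}(\log N)^k}\,\dd w.$$
The horizontal segments vanish because $f$ has polynomial order less than $1$ in $\bC_{\kappa/2}$ and $k>3$. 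By uniqueness of analytic continuation the identity then extends to all $s\in\bC_\kappa$.

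\textbf{Step 2: statements (a) and (c).} On the shifted line we have $\abs{N^w}=N^{\kappa/2-\Re s}\le N^{-\kappa/2}$. For (a), on a compact set $\abs{f}$ has polynomial growth of order less than $1$ along the line. The kernel is integrable, so the difference is $O(N^{-\kappa/2}(\log N)^{-k})$ uniformly on the compact set.

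For (c), we apply Minkowski's integral inequality as in \cref{lemma:carlson-derivative-inequality}. We get
$$\Bigl(\tfrac1{2T}\int_{-T}^T\abs{R_N^kf-f}^p(\sigma+it)\,\dd t\Bigr)^{1/p}\lesssim N^{-\kappa/2}\int_{\bR}\frac{\bigl(\tfrac1{2T}\int_{-T}^T\abs{f(\kappa/2+i(t+y))}^p\dd t\bigr)^{1/p}}{\abs{\kappa/2-\sigma+iy}^{k+1}}\,\dd y.$$
The inner mean over the translated window $[-T+y,T+y]\subseteq[-(T+\abs{y}),T+\abs{y}]$ is bounded by $C(1+\abs{y})$, using \cref{eq:carlson-in-derivative-lemma} exactly as in \cref{lemma:carlson-derivative-inequality}. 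Its $1/p$-th power therefore grows at most like $(1+\abs{y})^{1/p}$. This is integrable against $\abs{y}^{-k-1}$, uniformly in $\sigma>\kappa$ and $T\ge1$. The bound tends to $0$, which gives (c).

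\textbf{Step 3: statement (b).} This is the classical fact that Riesz means of a uniformly convergent series converge uniformly. Write $S_n(s)=\sum_{m\le n}a_mm^{-s}$. Partial summation gives
$$R_N^kf=\int_1^N S_{x}\,\dd\bigl(-(1-\log x/\log N)^k\bigr),$$
which is an average of partial sums with positive weights of total mass $1$. The weights concentrate near $x=N$ as $N\to\infty$. Since $S_x\to f$ uniformly on $\bC_\kappa$, the Toeplitz-type averaging preserves uniform convergence.

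\textbf{Main obstacle.} The main difficulty is justifying the contour shift in Step 1 when $f$ is only known to satisfy the mean-value bound in the strip $0<\Re s\le1/2$. One must control $f$ pointwise on horizontal segments. For this I would rely on \cref{lemma:order-strict-inequality}, which gives $\abs{f(\sigma+it)}=O(\abs t^{1/p})$ uniformly for $\sigma\ge\kappa/2$. Combined with the decay $\abs{w}^{-k-1}$ and $k>3$, this kills the horizontal contributions.
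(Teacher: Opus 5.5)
Your overall route (a Perron-type representation of $R_N^kf$, Minkowski's inequality in $t$, and the Carlson bound on translated windows) is the natural one. Step 3, the Toeplitz argument for (b), is correct. However, Step 1 contains a genuine error: the contribution from $w=0$ is not $f(s)$.

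For integer $k$ the integrand has a pole of order $k+1$ at $w=0$, and
\[
\Gamma(k+1)\operatorname{Res}_{w=0}\frac{f(s+w)N^w}{w^{k+1}(\log N)^k}=\sum_{j=0}^{k}\binom{k}{j}\frac{f^{(j)}(s)}{(\log N)^{j}}.
\]
For non-integer $k$, which the statement allows, $w^{-k-1}$ has a branch point at $0$. Moving the line to $\Re w<0$ then crosses a branch cut and produces a Hankel loop integral, not a residue. In both cases this term equals $f(s)+O(1/\log N)$, with the error governed by derivatives of $f$ near $s$.

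Consequently your claimed rate $O(N^{-\kappa/2}(\log N)^{-k})$ is false. For $f(s)=2^{-s}$ one has $R_N^kf(s)-f(s)=\bigl((1-\log 2/\log N)^k-1\bigr)2^{-s}$, which is of exact order $1/\log N$. Your Minkowski estimate of the shifted line integral is fine. What is missing is control of the $O(1/\log N)$ correction in the norm $\sup_{\sigma>\kappa}\sup_{T\geq1}$ of the $p$-means. This requires derivative bounds such as \cref{lemma:carlson-derivative-inequality}, which your argument never uses.

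A clean repair avoids crossing $w=0$ altogether. For every $c>0$,
\[
\frac{\Gamma(k+1)}{2\pi i}\int_{\Re w=c}\frac{N^w}{w^{k+1}(\log N)^k}\,\dd w=1 .
\]
Shift from large $c$ down to $c=1/\log N$; the horizontal segments vanish by \cref{lemma:order-strict-inequality}, exactly as in your argument. This gives
\[
R_N^kf(s)-f(s)=\frac{\Gamma(k+1)}{2\pi i(\log N)^k}\int_{\Re w=1/\log N}\bigl(f(s+w)-f(s)\bigr)\frac{N^w}{w^{k+1}}\,\dd w,
\]
with $\abs{N^w}=e$ on the line.

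\begin{itemize}
\item For $\abs{\Im w}\leq1$, write $f(s+w)-f(s)=w\int_0^1f'(s+\theta w)\,\dd\theta$. Apply Minkowski together with \cref{lemma:carlson-derivative-inequality}; the translated windows shift by at most $1$, and $\Re(s+\theta w)>\kappa$. Since $\int_{-1}^{1}\abs{1/\log N+iy}^{-k}\,\dd y\lesssim(\log N)^{k-1}$, this part is $O(1/\log N)$ uniformly in $\sigma>\kappa$ and $T\geq1$.
\item For $\abs{\Im w}>1$, your translated-window estimate applied to $\abs{f(s+w)}+\abs{f(s)}$ gives $O((\log N)^{-k})$.
\end{itemize}

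This proves (c) for every real $k>3$. The same representation also gives (a), since $f'$ is bounded near compact subsets of $\bC_0$ and $f$ has order less than $1$ there.
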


We remark here that $(R_N^k f)'=R_N^k(f')$, which is immediate from the definition, and as such we generally only write $R_N^kf'$ for this.

Our final tool is almost periodicity. We call a complex-valued function $f$, defined on a half-plane $\bC_\kappa$, \emph{almost periodic} if it is uniformly continuous, bounded, and for all $\varepsilon>0$, the set of all $\tau>0$ satisfying
$$\abs{f(s+i\tau)-f(s)}\leq\varepsilon$$
for all $s\in\bC_\kappa$ is relatively dense. A bounded uniformly continuous function $f$ is almost periodic if and only if the set of all vertical translations $\{f(\,\cdot\,+i\tau):\tau\in\bR\}$ is precompact in the uniform norm on $\bC_\kappa$. The most important example of almost periodic functions in our setting is uniformly convergent Dirichlet series, and every somewhere convergent Dirichlet series is uniformly convergent on $\bC_\kappa$ for some sufficiently large $\kappa$ (e.g., \cite{queffelec_diophantine_2020}*{Section 4.2}). We will also need that if $f$ is almost periodic on $\bC_\kappa$ and $\psi$ is uniformly continuous on $f(\bC_\kappa)$, then the composition $\psi\circ f$ is almost periodic on $\bC_\kappa$. There are a few more things we need to know about almost periodic functions. The first is that if $f$ is almost periodic in $\bC_\kappa$, then the mean value
$$M(f,\sigma)=\lim_{T\to\infty}\frac{1}{2T}\int_{-T}^T f(\sigma+it)\,\dd t$$
exists uniformly for all $\sigma>\kappa$, and defines a bounded and uniformly continuous function in $\sigma$. Next, if $f$ is an analytic almost periodic function, then $M(f,\sigma)$ is constant, and we denote its value by $f(+\infty)$. It holds that
$$\lim_{\kappa\to\infty}\sup_{s\in\bC_\kappa}\abs{f(s)-f(+\infty)}=0,$$
and in the special case when $f$ is a Dirichlet series, we have that $f(+\infty)$ is the constant term in the series. For more details, see, for example, \cite{besicovitch_almost_1955}.

\section{Ergodic integral theorems for the Kronecker flow}\label{section:ergodic-integral-theorems}

In this section, we will deduce from the ergodic theorem versions of the monotone and dominated convergence theorem, as well as Fatou's lemma. The ergodic version of the monotone convergence theorem will in particular play a role in our later arguments when we wish to interchange limits and vertical means. To prove it we will rely on the following slight extension of the ergodic theorem for the Kronecker flow in the case of non-negative functions.

\begin{lemma}\label{lemma:non-negative-ergodic-theorem}
    If $F:\bT^\infty\to[0,\infty]$ is measurable, then
    $$\int_{\bT^\infty}F(\chi)\,\dd m_\infty(\chi)=\lim_{T\to\infty}\frac{1}{2T}\int_{-T}^T F(\chi'\fp^{-it})\,\dd t$$
    for almost every $\chi'\in\bT^\infty$.
\end{lemma}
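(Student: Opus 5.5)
The plan is to split into the integrable case and the non-integrable case. Throughout, the inner integral $\int_{-T}^T F(\chi'\fp^{-it})\,\dd t$ makes sense for almost every $\chi'$: the map $(\chi',t)\mapsto F(\chi'\fp^{-it})$ is jointly measurable and non-negative. By Tonelli's theorem and invariance of $m_\infty$ under translation by $\fp^{-it}$, its $m_\infty$-integral over $\bT^\infty$ equals $2T\int F\,\dd m_\infty$.

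\emph{Integrable case.} If $\int F\,\dd m_\infty<\infty$, then $F\in L^1(\bT^\infty)$. The ergodic theorem for the Kronecker flow (ergodicity of $\{\fp^{-it}\}_{t\in\bR}$, as already used in \cref{eq:ergodic-theorem-polynomials}) gives the claimed identity for almost every $\chi'$. The only point to check is the two-sided average, which we obtain by applying Birkhoff's theorem to the flow and to its reverse $t\mapsto\fp^{it}$. Both are ergodic and have the same invariant sets, so both one-sided averages converge almost everywhere to $\int F\,\dd m_\infty$.

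\emph{Non-integrable case.} If $\int F\,\dd m_\infty=\infty$, we truncate. Set $F_M=\min(F,M)$ for $M\in\bZ^+$. Each $F_M$ is bounded, hence integrable, so the first case gives a full-measure set $E_M$ on which
$$\lim_{T\to\infty}\frac{1}{2T}\int_{-T}^TF_M(\chi'\fp^{-it})\,\dd t=\int F_M\,\dd m_\infty.$$
On $E=\bigcap_M E_M$, which still has full measure, we use $F\geq F_M$ to get
$$\liminf_{T\to\infty}\frac{1}{2T}\int_{-T}^TF(\chi'\fp^{-it})\,\dd t\geq\int F_M\,\dd m_\infty$$
for every $M$. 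By the classical monotone convergence theorem the right-hand side tends to $\int F\,\dd m_\infty=\infty$. Hence the limit exists and equals $\infty$ for all $\chi'\in E$.

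The truncation step is exactly where monotone convergence enters, and it also covers the case where $F$ takes the value $\infty$ on a set of positive measure. If that set has measure zero, we can replace $F$ by a finite-valued version without affecting either side. For almost every $\chi'$, the flow line $\{\chi'\fp^{-it}\}$ meets a null set only in a Lebesgue-null set of times; this follows from Tonelli applied to the indicator function of the null set.

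The main obstacle is this measure-theoretic bookkeeping rather than any real difficulty: joint measurability of $(\chi',t)\mapsto\chi'\fp^{-it}$, and making sure the exceptional sets from countably many applications of the ergodic theorem combine into one null set. Both are routine, since the flow is continuous and we only take a countable intersection over $M$.
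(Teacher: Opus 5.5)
Your proof is correct and matches the paper's argument: apply the ergodic theorem when $\int F\,\dd m_\infty<\infty$; otherwise truncate, intersect the countably many full-measure sets, bound the $\liminf$ from below, and let the truncation level tend to infinity via monotone convergence. The one difference is that you truncate by $\min(F,M)$ while the paper uses $F\mathbf{1}_{\{F\le n\}}$, and yours has the small advantage of increasing to $F$ even on $\{F=\infty\}$, so that case needs no separate treatment.
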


\begin{proof}
    If $\int_{\bT^\infty}F(\chi)\,\dd m_\infty(\chi)<\infty$, then this follows immediately from the ergodic theorem for the Kronecker flow, so suppose $\int_{\bT^\infty}F(\chi)\,\dd m_\infty(\chi)=\infty$. Define, for $n\in\bZ^+$ and $\chi\in\bT^\infty$,
    $$F_n(\chi)=\begin{cases}
        F(\chi),&F(\chi)\leq n,\\
        0,&F(\chi)>n.
    \end{cases}$$
    Then $F_n\in L^1(\bT^\infty)$, so by applying the ergodic theorem to $F_n$ we can find a set $E_n\subseteq\bT^\infty$ of full measure such that
    $$\int_{\bT^\infty}F_n(\chi)\,\dd m_\infty(\chi)=\lim_{T\to\infty}\frac{1}{2T}\int_{-T}^TF_n(\chi'\fp^{-it})\,\dd t$$
    for all $\chi'\in E_n$. Set $E=\bigcap_{n\in\bZ^+}E_n$. Then, by monotonicity, we have that
    $$\liminf_{T\to\infty}\frac{1}{2T}\int_{-T}^T F(\chi'\fp^{-it})\,\dd t\geq\lim_{T\to\infty}\frac{1}{2T}\int_{-T}^TF_n(\chi'\fp^{-it})\,\dd t=\int_{\bT^\infty}F_n(\chi)\,\dd m_\infty(\chi)$$
    for all $\chi'\in E$ and all $n\in\bZ^+$. The result now follows by letting $n\to\infty$ in the above inequality and using the monotone convergence theorem.
\end{proof}

From this we readily deduce our ergodic version of the monotone convergence theorem.

\begin{theorem}[Ergodic monotone convergence for the Kronecker flow]\label{theorem:ergodic-monotone-convergence}
    Let $\{F_n\}_{n\in\bZ^+}$ be a non-decreasing sequence of non-negative measurable functions on $\bT^\infty$, and set
    $$F(\chi)=\lim_{n\to\infty}F_n(\chi)$$
    for $\chi\in\bT^\infty$. Then
    $$\lim_{n\to\infty}\lim_{T\to\infty}\frac{1}{2T}\int_{-T}^TF_n(\chi\fp^{-it})\,\dd t=\lim_{T\to\infty}\frac{1}{2T}\int_{-T}^TF(\chi\fp^{-it})\,\dd t$$
    for almost every $\chi\in\bT^\infty$.
\end{theorem}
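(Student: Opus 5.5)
The plan is to apply \cref{lemma:non-negative-ergodic-theorem} to each $F_n$ and to $F$, and then use the classical monotone convergence theorem on $\bT^\infty$ to link the two sides.

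First, each $F_n$ is a non-negative measurable function on $\bT^\infty$. By \cref{lemma:non-negative-ergodic-theorem} there is a set $E_n\subseteq\bT^\infty$ of full measure such that
$$\lim_{T\to\infty}\frac{1}{2T}\int_{-T}^TF_n(\chi\fp^{-it})\,\dd t=\int_{\bT^\infty}F_n\,\dd m_\infty\in[0,\infty]$$
for all $\chi\in E_n$. In particular, the inner limit on the left-hand side of the statement exists, possibly equal to $+\infty$, for $\chi\in E_n$.

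Second, $F$ is measurable as a pointwise limit of measurable functions, and it takes values in $[0,\infty]$. Applying \cref{lemma:non-negative-ergodic-theorem} again gives a set $E_0$ of full measure on which
$$\lim_{T\to\infty}\frac{1}{2T}\int_{-T}^TF(\chi\fp^{-it})\,\dd t=\int_{\bT^\infty}F\,\dd m_\infty.$$

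Now set $E=\bigcap_{n\geq0}E_n$. This is a countable intersection of sets of full measure, so it has full measure. For $\chi\in E$, the left-hand side of the statement equals
$$\lim_{n\to\infty}\int_{\bT^\infty}F_n\,\dd m_\infty.$$
This limit exists because the sequence of integrals is non-decreasing. By the monotone convergence theorem on $(\bT^\infty,m_\infty)$ it equals $\int_{\bT^\infty}F\,\dd m_\infty$, which is the right-hand side for $\chi\in E$. Both sides are interpreted in $[0,\infty]$ throughout.

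There is no real obstacle. The only point needing care is that infinite integrals are allowed, and that is exactly what \cref{lemma:non-negative-ergodic-theorem} handles.
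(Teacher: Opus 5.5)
Your proposal is correct and follows essentially the same route as the paper. It applies \cref{lemma:non-negative-ergodic-theorem} to each $F_n$ and to $F$, intersects the countably many full-measure sets, and links the two sides via the classical monotone convergence theorem on $(\bT^\infty,m_\infty)$, with all values taken in $[0,\infty]$.
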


\begin{proof}
    For each $n\in\bZ^+$, use \cref{lemma:non-negative-ergodic-theorem} to find a set $E_n\subseteq\bT^\infty$ of full measure such that
    \begin{equation}\label{eq:ergodic-application-fn-monotone}
    \lim_{T\to\infty}\frac{1}{2T}\int_{-T}^TF_n(\chi'\fp^{-it})\,\dd t=\int_{\bT^\infty}F_n(\chi)\,\dd m_\infty(\chi)
    \end{equation}
    for all $\chi\in E_n$. Set $E=\bigcap_{n\in\bZ^+}E_n$. Then \cref{eq:ergodic-application-fn-monotone} holds for all $\chi'\in E$ and all $n\in\bZ^+$, so by letting $n\to\infty$ and applying the monotone convergence theorem, it follows that
    $$\lim_{n\to\infty}\lim_{T\to\infty}\frac{1}{2T}\int_{-T}^TF_n(\chi'\fp^{-it})\,\dd t=\int_{\bT^\infty}F(\chi)\,\dd m_\infty(\chi).$$
    The result now follows by applying \cref{lemma:non-negative-ergodic-theorem} again and intersecting the resulting set with $E$.
\end{proof}

As an immediate consequence of the ergodic monotone convergence theorem, we obtain also the following ergodic version of Fatou's lemma; the proof is standard.

\begin{theorem}[Ergodic Fatou's lemma for the Kronecker flow]
    Let $\{F_n\}_{n\in\bZ^+}$ be a sequence of non-negative measurable functions on $\bT^\infty$. Then
    $$\lim_{T\to\infty}\frac{1}{2T}\int_{-T}^T\liminf_{n\to\infty}F_n(\chi\fp^{-it})\,\dd t\leq\liminf_{n\to\infty}\lim_{T\to\infty}\frac{1}{2T}\int_{-T}^TF_n(\chi\fp^{-it})\,\dd t$$
    for almost every $\chi\in\bT^\infty$.
\end{theorem}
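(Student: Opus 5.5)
The plan is to run the classical derivation of Fatou's lemma from the monotone convergence theorem, with \cref{theorem:ergodic-monotone-convergence} playing the role of the monotone convergence theorem. For $n\in\bZ^+$ and $\chi\in\bT^\infty$, set
$$G_n(\chi)=\inf_{k\geq n}F_k(\chi).$$
Each $G_n$ is non-negative and measurable, being a countable infimum of measurable functions. The sequence $\{G_n\}$ is non-decreasing in $n$, and $\lim_{n\to\infty}G_n(\chi)=\liminf_{n\to\infty}F_n(\chi)$ for every $\chi$.

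Applying this pointwise at $\chi\fp^{-it}$ shows that the integrand on the left-hand side of the claim is $\lim_{n}G_n(\chi\fp^{-it})$. Hence \cref{theorem:ergodic-monotone-convergence} gives a set $E_0$ of full measure on which
$$\lim_{T\to\infty}\frac{1}{2T}\int_{-T}^T\liminf_{n\to\infty}F_n(\chi\fp^{-it})\,\dd t=\lim_{n\to\infty}\lim_{T\to\infty}\frac{1}{2T}\int_{-T}^TG_n(\chi\fp^{-it})\,\dd t.$$
In particular, the limit in $T$ on the left exists, possibly equal to $+\infty$, for $\chi\in E_0$.

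The right-hand side of the claim is a liminf in $n$ of limits in $T$, so we first need those inner limits to exist. By \cref{lemma:non-negative-ergodic-theorem} applied to each $F_n$ and to each $G_n$, there are sets $E_n'$ and $E_n''$ of full measure on which the $T$-limits of the averages of $F_n$ and of $G_n$ exist and equal $\int F_n\,\dd m_\infty$ and $\int G_n\,\dd m_\infty$, respectively. Let $E$ be the intersection of $E_0$ with all the $E_n'$ and $E_n''$. This is a countable intersection, so $E$ has full measure.

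For $\chi\in E$ and $k\geq n$ we have $G_n\leq F_k$ pointwise. Averages over $[-T,T]$ preserve this inequality, and both $T$-limits exist on $E$, so
$$\lim_{T\to\infty}\frac{1}{2T}\int_{-T}^TG_n(\chi\fp^{-it})\,\dd t\leq\inf_{k\geq n}\lim_{T\to\infty}\frac{1}{2T}\int_{-T}^TF_k(\chi\fp^{-it})\,\dd t.$$
Letting $n\to\infty$, the left side tends to the left-hand side of the claim by the choice of $E_0$, and the right side tends to the liminf in the claim. This gives the inequality for every $\chi\in E$.

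There is no real obstacle in this argument. The one step that needs care is making sure that every limit in $T$ that appears actually exists, including when it equals $+\infty$, on a single common set of full measure. That is exactly why we intersect the countably many exceptional sets coming from \cref{lemma:non-negative-ergodic-theorem} before comparing the averages.
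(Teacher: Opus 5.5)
Your proposal is correct and is exactly the standard argument the paper has in mind when it calls this an immediate consequence of \cref{theorem:ergodic-monotone-convergence}: apply it to $G_n=\inf_{k\geq n}F_k$, then compare with $F_k$ for $k\geq n$. Intersecting the countably many full-measure sets from \cref{lemma:non-negative-ergodic-theorem}, so that every $T$-limit exists (possibly $+\infty$) on a common set, is the one detail the paper leaves implicit, and you handle it correctly.
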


By arguing the same way as in the proof of \cref{theorem:ergodic-monotone-convergence}, replacing the applications of the monotone convergence theorem with the dominated convergence theorem, one readily also proves the following ergodic version of the dominated convergence theorem. Although we shall not need it, we include it as it might be of interest to the reader.

\begin{theorem}[Ergodic dominated convergence for the Kronecker flow]
    Let $\{F_n\}_{n\in\bZ^+}$ be a sequence of complex-valued measurable functions on $\bT^\infty$, and suppose that the sequence converges almost everywhere to a function $F$. Suppose also that there exists a function $G\in L^1(\bT^\infty)$ such that
    $$\abs{F_n(\chi)}\leq\abs{G(\chi)}$$
    for almost every $\chi\in\bT^\infty$. Then $F$ and $F_n$ are in $L^1(\bT^\infty)$ for all $n\in\bZ^+$, and
    $$\lim_{n\to\infty}\lim_{T\to\infty}\frac{1}{2T}\int_{-T}^TF_n(\chi\fp^{-it})\,\dd t=\lim_{T\to\infty}\frac{1}{2T}\int_{-T}^TF(\chi\fp^{-it})\,\dd t$$
    for almost every $\chi\in\bT^\infty$.
\end{theorem}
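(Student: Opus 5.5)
The argument follows the template of the proof of \cref{theorem:ergodic-monotone-convergence}, with the monotone convergence theorem replaced by the dominated convergence theorem throughout.

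\emph{Integrability.} Since $\abs{F_n}\leq\abs{G}$ almost everywhere and $G\in L^1(\bT^\infty)$, each $F_n$ lies in $L^1(\bT^\infty)$. Passing to the almost everywhere limit gives $\abs{F}\leq\abs{G}$ almost everywhere, so $F\in L^1(\bT^\infty)$ as well.

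\emph{Applying the ergodic theorem.} For each $n\in\bZ^+$, the ergodic theorem for the Kronecker flow, applied to $F_n\in L^1(\bT^\infty)$, gives a set $E_n$ of full measure such that
$$\lim_{T\to\infty}\frac{1}{2T}\int_{-T}^TF_n(\chi\fp^{-it})\,\dd t=\int_{\bT^\infty}F_n\,\dd m_\infty$$
for all $\chi\in E_n$. Complex-valued functions pose no difficulty: one either applies the theorem to real and imaginary parts separately, or uses the $L^1$ version directly. Applying the theorem to $F$ likewise gives a set $E_0$ of full measure on which the time average of $F$ along the orbit equals $\int_{\bT^\infty}F\,\dd m_\infty$.

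\emph{Passing to the limit.} Set $E=\bigcap_{n\geq0}E_n$. This is a countable intersection of sets of full measure, so it has full measure. For $\chi\in E$, the left-hand side of the claimed identity equals $\lim_{n\to\infty}\int_{\bT^\infty}F_n\,\dd m_\infty$. By the dominated convergence theorem this limit is $\int_{\bT^\infty}F\,\dd m_\infty$, and this in turn equals the right-hand side because $\chi\in E_0$.

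\emph{Where care is needed.} There is no real obstacle here. One must apply the ergodic theorem to each of the countably many functions $F_n$ and to $F$ itself, and intersect the resulting sets, so that the exceptional set does not depend on $n$. It is also important that the ergodic theorem is applied to fixed $L^1$ functions, so its null sets are insensitive to changing $F_n$ or $F$ on null sets. The pointwise domination along a particular orbit is never needed.
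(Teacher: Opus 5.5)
Your proof is correct and takes the same route the paper indicates. The paper proves this statement by repeating the proof of \cref{theorem:ergodic-monotone-convergence} with the monotone convergence theorem replaced by the dominated convergence theorem: apply the ergodic theorem to each $F_n$ and to $F$, intersect the countably many full-measure sets, and pass to the limit in the integrals over $\bT^\infty$, exactly as you do.
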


\begin{remark}
    One may clearly replace $\bT^\infty$ with any probability space and the Kronecker flow with any ergodic semigroup of measure-preserving transformations on this space in all three of the above theorems by applying the Birkhoff ergodic theorem in this setting (e.g., \cite{queffelec_diophantine_2020}*{Theorem 2.1.12}).
\end{remark}

\section{Zeros of Dirichlet series and the Jessen function}\label{section:zeros-and-jessen}

In this section we will be concerned with the zeros of somewhere convergent Dirichlet series $f$ with analytic continuation to $\bC_0$ satisfying
\begin{equation}\label{eq:carlson-condition-zero-section}
    \sup_{\sigma>0}\sup_{T\geq1}\frac{1}{2T}\int_{-T}^T\abs{f(\sigma+it)}^p\,\dd t<\infty
\end{equation}
for some $1\leq p<\infty$. In any half-plane $\bC_\kappa$ where $f$ is almost periodic, the properties of the zeros of $f$ were studied originally by Jessen in \cite{jessen_uber_1933}, and later and in more depth by Jessen and Tornehave in \cite{jessen_mean_1945}. Our main concern is the original work of Jessen, who showed that if $f$ is almost periodic on $\bC_\kappa$, then the \emph{Jessen function}
$$\sJ(f,\sigma)=\lim_{T\to\infty}\frac{1}{2T}\int_{-T}^T\log\abs{f(\sigma+it)}\,\dd t$$
exists and is finite for any $\sigma>\kappa$. In our setting, we know that $f$ is almost periodic in $\bC_\kappa$ for all $\kappa>1/2$. In view of the results of Jessen, our main concern is thus the behavior of the zero set of $f$ in the strip $0<\Re s\leq1/2$.

Borchsenius and Jessen \cite{borchsenius_mean_1948}, building on this work, established the existence of the Jessen function in a much more general setting. Instead of assuming almost periodicity, they assumed the existence of a sequence of almost periodic functions that approximates the function in a sufficiently strong sense. Combining their result with \cref{lemma:riesz-means-convergence}, it follows that the Jessen function $\sJ(f,\sigma)$ exists for $0<\sigma\leq1/2$ under the assumption \cref{eq:carlson-condition-zero-section}. We shall present their argument with some simplifications and adaptations to our setting; however, we make it clear that some variant of most results in this section can be found in \cite{borchsenius_mean_1948}. The goal of this is twofold: to give a more complete treatment of the Jessen function and the arguments used to establish its existence; and to demonstrate the techniques used, which we believe will be relevant for further understanding the zero sets of $\sH^p$-functions. We shall use some of the techniques of Borchsenius and Jessen in our later arguments, and so we deem this a natural place to introduce them.

The main goal of the current section is thus to prove the following theorem.

\begin{theorem}\label{theorem:existence-of-jessen-function}
    Let $1\leq p<\infty$ and let $f$ be a somewhere convergent Dirichlet series that is not identically zero with analytic continuation to $\bC_0$ satisfying
    $$\sup_{\sigma>0}\sup_{T\geq1}\frac{1}{2T}\int_{-T}^T\abs{f(\sigma+it)}^p\,\dd t<\infty.$$
    Then the Jessen function
    $$\sJ(f,\sigma)=\lim_{T\to\infty}\frac{1}{2T}\int_{-T}^T\log\abs{f(\sigma+it)}\,\dd t$$
    exists and is finite for all $\sigma>0$, and the limit converges uniformly in $\sigma$ on $(\alpha,\beta)$ for all $0<\alpha<\beta$. Furthermore, the function $\sigma\mapsto\sJ(f,\sigma)$ is decreasing and convex on $(0,\infty)$, and for all $k>3$ it holds that
    $$\lim_{N\to\infty}\sJ(R_N^kf,\sigma)=\sJ(f,\sigma)$$
    uniformly in $\sigma$ on $(\alpha,\beta)$ for all $0<\alpha<\beta$. Finally, if $f(+\infty)\neq0$, then the limits are uniform in $\sigma$ on $(\kappa,\infty)$ for all $\kappa>0$.
\end{theorem}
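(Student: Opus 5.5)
The plan is to approximate $f$ by its Riesz means $f_N=R_N^kf$ with $k>3$. These are Dirichlet polynomials, hence almost periodic on every half-plane, so Jessen's classical theory applies to them: $\sJ(f_N,\sigma)$ exists for every $\sigma$, is convex in $\sigma$, and the defining limit is uniform on vertical strips. I will transfer existence, uniformity and convexity to $f$ through a uniform Cauchy-type estimate:
$$\limsup_{T\to\infty}\frac{1}{2T}\int_{-T}^T\babs{\log\abs{f(\sigma+it)}-\log\abs{f_N(\sigma+it)}}\,\dd t\to0$$
as $N\to\infty$, uniformly for $\sigma\in(\alpha,\beta)$. Once this is in hand, the averages of $\log\abs{f}$ form a uniform limit of functions whose $T$-limits exist uniformly. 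A standard $\varepsilon/3$ argument then shows that the limit in $T$ exists uniformly on $(\alpha,\beta)$ and that $\sJ(f_N,\sigma)\to\sJ(f,\sigma)$ uniformly there. Convexity passes to the limit. Monotonicity follows because $\sJ(f,\sigma)\to\log\abs{f(+\infty)}$ (or $-\infty$ when the constant term vanishes) as $\sigma\to\infty$, and a convex function on $(0,\infty)$ that stays bounded above at infinity must be nonincreasing. Finiteness comes from the estimate combined with finiteness for $f_N$.

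For the key estimate I would follow Borchsenius--Jessen and split $\log\abs{f}-\log\abs{f_N}=\log\abs{f/f_N}$ into a positive and a negative part. The positive part is easy. We have $\log^+\abs{a/b}\le\abs{a-b}/\abs{b}$, but that is not directly usable; I will instead use $\log^+x\le x^{p}/p$ together with Jensen-type averaging. Concretely, $\babs{\log\abs{a}-\log\abs{b}}$ is controlled wherever $\abs{a-b}\le\frac12\abs{b}$, and Lemma~\ref{lemma:riesz-means-convergence}(c) gives smallness of $f-f_N$ in vertical $L^p$ mean, uniformly in $\sigma>\alpha$.

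The real difficulty is the set where $\abs{f}$ or $\abs{f_N}$ is tiny, that is, near zeros. Here I would use the Borchsenius--Jessen device of passing to harmonic majorants and subharmonicity in a rectangle $[\alpha',\beta']\times[t,t+1]$. For an analytic $g$ on a slightly larger strip, the integral $\int\log^-\abs{g}$ over a vertical segment is bounded by a quantity depending on $\log\abs{g}$ at one interior point and on $\sup\log^+\abs{g}$ over a neighbourhood. This is Jensen's formula, equivalently a Poisson--Jensen bound for $\log\abs{g}$ from below in $L^1$. The interior point can be taken far to the right, at $\sigma_1$ large, where $g$ is close to $g(+\infty)\neq0$ uniformly in $t$ and $N$. (If $f(+\infty)=0$, I first divide by $m^{-s}$, where $m$ is the first nonzero index; this shifts $\sJ$ by $\sigma\log m$ and does not affect the claims.) Chaining overlapping discs from $\sigma_1$ down to $\alpha$, with the number of discs independent of $t$, gives a bound on the number of zeros of $f$ and of $f_N$ in $[\alpha,\beta]\times[t,t+1]$. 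The bound is uniform in $t$, $N$ and $\sigma$, and only needs uniform local $L^p$-control, which follows from the Carlson condition, Lemma~\ref{lemma:riesz-means-convergence}(c), and subharmonicity of $\abs{g}^p$.

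With zero counts uniformly bounded, the exceptional set where $\abs{f-f_N}>\frac12\abs{f_N}$ contributes little. On each unit rectangle, $\log\abs{f}$ differs from $\sum\log\abs{s-\rho}$ by a harmonic function controlled by the local $L^p$ norms, and the integral of $\log^-\abs{s-\rho}$ over short segments is uniformly small. Uniform integrability of $\log\abs{f}$ and $\log\abs{f_N}$ on vertical segments then follows, and combining it with convergence in measure (from (c)) yields the estimate.

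For the final claim, when $f(+\infty)\ne0$, on $\bC_\kappa$ with $\kappa$ large both $f$ and $f_N$ stay uniformly away from $0$ and converge uniformly, by Lemma~\ref{lemma:riesz-means-convergence}(b). So the limits are uniform on $(\kappa',\infty)$ for large $\kappa'$, and combining this with the strip $(\kappa,\kappa']$ gives uniformity on $(\kappa,\infty)$. I expect the main obstacle to be the uniform zero count and the uniform integrability of $\log^-$, uniform both in $t$ and in $N$.
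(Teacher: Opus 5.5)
\textbf{Verdict: genuine gap.} The overall plan matches the paper: approximate by $R_N^kf$, apply Jessen's theorem to the Riesz means, and control small values of $\abs{f}$ and $\abs{R_N^kf}$ by counting zeros in unit-height boxes. Anchoring at a far-right point after removing $m^{-s}$ is a fine substitute for \cref{lemma:sup-bound-below-sequence}. The problem is in the step you flag as the main obstacle.

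You claim the number of zeros of $f$ and $R_N^kf$ in $[\alpha,\beta]\times[t,t+1]$ is bounded uniformly in $t$, because the Carlson condition gives uniform local $L^p$-control. It gives no such thing. The condition $\sup_{T\geq1}\frac{1}{2T}\int_{-T}^T\abs{f(\sigma+it)}^p\,\dd t<\infty$ only yields $\int_t^{t+1}\abs{f(\sigma+i\tau)}^p\,\dd\tau=O(1+\abs{t})$. In general $f$ is unbounded in the strip $0<\Re s\leq1/2$, so $\norm{V_tf}_K$ is unbounded in $t$. Jensen's formula then only gives $N_{V_tf}(R)\leq C\log(\norm{V_tf}_K+1)$, and the zero-free factor is likewise controlled only by a multiple of $\log(\norm{V_tf}_K+1)$. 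So ``zero counts uniformly bounded'' is not available, nor is the uniform integrability of $\log^-\abs{f}$ on vertical segments that you deduce from it. As written, the key estimate is not proved.

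\textbf{What is missing} is a quantitative local bound that is then averaged in $\tau$. You need, with constants depending only on the boxes and the anchoring lower bound $m$,
\[
\int_\tau^{\tau+1}\bigl(\log\abs{f(\sigma+it)}_\delta-\log\abs{f(\sigma+it)}\bigr)\,\dd t\leq C_\delta\log^2(\norm{V_\tau f}_K+1),\qquad C_\delta\to0 \text{ as } \delta\downarrow0,
\]
for all $\sigma\in[\alpha,\beta]$ and all $\tau$, and the same for $R_N^kf$ uniformly in $N$. This is \cref{lemma:log-estimate-c-family}. The paper proves it by splitting on whether $\norm{V_\tau f}_K$ exceeds a threshold $M_0$:
\begin{enumerate}[(i)]
\item above $M_0$, the crude bound $O(\log(\norm{V_\tau f}_K+1))$ is already at most $\varepsilon\log^2(\norm{V_\tau f}_K+1)$;
\item below $M_0$, the zero count is at most a fixed $N_0$, and $\{\abs{f}\leq\delta\}$ lies within $r_\delta$ of those zeros.
\end{enumerate}

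Next, the inequality $\log^2(x+1)\leq Dx^p$ for $x\geq m$ reduces everything to one fact: $\limsup_{T\to\infty}\frac{1}{2T}\int_{-T}^T\norm{V_\tau f}_K^p\,\dd\tau$ is finite, uniformly over the Riesz means. This follows from the sub-mean-value inequality for $\abs{f}^p$, the Carlson condition and \cref{lemma:riesz-means-convergence}(c); see \cref{lemma:sup-mean-values}.

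Your appeal to subharmonicity of $\abs{g}^p$ is the right ingredient. However, it controls local norms only in mean over $\tau$, not pointwise, and the argument has to be organised around that. Once this averaged bound (i.e.\ \cref{lemma:delta-log-limits}) is in place, your convergence-in-measure step goes through. The cleanest way to do it is via the $1/\delta$-Lipschitz map $z\mapsto\log\abs{z}_\delta$.
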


Let us start by establishing some notation. As mentioned in the introduction, we write $\sZ(f)$ for the zero set of $f$, and whenever we take a sum or product over $\sZ(f)$ (or $\sZ(f)\cap\Omega$ for some set $\Omega$) we repeat each term according to the multiplicity of the corresponding zero. If $\Omega\subseteq\bC$, then we will write $N_f(\Omega)$ for the number of zeros of $f$ in $\Omega$ counting multiplicity; that is,
$$N_f(\Omega)=\sum_{s\in\sZ(f)\cap\Omega}1.$$
We will also write
$$\norm{f}_\Omega=\sup_{s\in\Omega}\abs{f(s)}.$$

The underlying principle behind the techniques of Borchsenius and Jessen is to estimate the zero behavior of $f$ in a given domain $\Omega_1$ by the quantity $\norm{f}_{\Omega_2}$ for a bigger domain $\Omega_2$, in a way where this control can be done uniformly over vertical translations.

As a small preliminary lemma, we need the following simple consequence of the Riemann mapping theorem.

\begin{lemma}\label{lemma:riemann-mapping-lemma}
    Let $\Omega_1,\Omega_2$ be bounded simply connected domains in $\bC$ with $\overline\Omega_1\subseteq\Omega_2$. Then there exists a family $\{\varphi_w\}_{w\in\overline\Omega_1}$ of Riemann maps $\varphi_w:\Omega_2\to\bD$ and a $\rho\in(0,1)$ such that $\varphi_w(w)=0$ and $\varphi_w(\Omega_1)\subseteq\rho\bD$ for all $w\in\overline\Omega_1$.
\end{lemma}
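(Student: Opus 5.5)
Fix a Riemann map $\varphi:\Omega_2\to\bD$; every other Riemann map is obtained by post-composing with a disc automorphism. For $w\in\overline\Omega_1$ set
$$\varphi_w(z)=\frac{\varphi(z)-\varphi(w)}{1-\overline{\varphi(w)}\varphi(z)}.$$
This is again a Riemann map $\Omega_2\to\bD$, and $\varphi_w(w)=0$. What remains is to find a single $\rho<1$ with $\varphi_w(\Omega_1)\subseteq\rho\bD$ for all $w\in\overline\Omega_1$.

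First, the set $K=\overline\Omega_1$ is compact, since $\Omega_1$ is bounded, and $K\subseteq\Omega_2$. Because $\varphi$ is continuous on $\Omega_2$, the image $\varphi(K)$ is a compact subset of $\bD$. Hence there is an $r<1$ with $\abs{\varphi(z)}\leq r$ for all $z\in K$.

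Next I use the standard pseudo-hyperbolic bound: for $a,b\in\bD$ with $\abs{a},\abs{b}\leq r$,
$$\Abs{\frac{a-b}{1-\overline b a}}\leq\frac{2r}{1+r^2}<1.$$
To justify it, note that $\bigl|\frac{a-b}{1-\overline ba}\bigr|$ is the pseudo-hyperbolic distance, which satisfies the strong triangle inequality $d(a,b)\leq\frac{d(a,0)+d(0,b)}{1+d(a,0)d(0,b)}$. The map $(x,y)\mapsto\frac{x+y}{1+xy}$ is increasing in each variable on $[0,1)^2$, so the bound follows. A simpler route avoids this inequality: the function $(a,b)\mapsto\bigl|\frac{a-b}{1-\overline ba}\bigr|$ is continuous on the compact set $\overline{r\bD}\times\overline{r\bD}$ and strictly less than $1$ there, so it attains a maximum $\rho<1$.

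Applying this with $a=\varphi(z)$ for $z\in\Omega_1$ and $b=\varphi(w)$ for $w\in\overline\Omega_1$ gives $\abs{\varphi_w(z)}\leq\rho$. Replacing $\rho$ by $(1+\rho)/2$ makes the inclusion in the open disc $\rho\bD$ strict. This completes the argument.

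None of the steps is a real obstacle. The only point needing care is obtaining uniformity in $w$, and that comes from combining the compactness of $\varphi(\overline\Omega_1)$ in $\bD$ with the continuity of the Möbius expression in both variables.
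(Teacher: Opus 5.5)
Your proof is correct and follows the paper's route: fix a Riemann map, post-compose with the disc automorphism sending $\varphi(w)$ to $0$, and obtain a uniform $\rho<1$ from the compactness of $\overline\Omega_1$ and the joint continuity of the Möbius expression. Your final step of replacing $\rho$ by $(1+\rho)/2$ handles the strict inclusion in the open disc $\rho\bD$ explicitly; the paper takes $\rho$ to be the maximum and leaves that inclusion implicit (it follows from the open mapping theorem).
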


\begin{proof}
    Fix an arbitrary Riemann map $\varphi:\Omega_2\to\bD$. For $\alpha\in\bD$ consider the disk automorphism $\psi_\alpha:\bD\to\bD$ interchanging $\alpha$ and $0$, i.e.,
    \begin{equation}\label{eq:disk-automorphism}
    \psi_\alpha(z)=\frac{\alpha-z}{1-\overline\alpha z}.
    \end{equation}
    Set $\varphi_w=\psi_{\varphi(w)}\circ\varphi:\Omega_2\to\bD$ for $w\in\Omega_2$. Clearly $\varphi_w$ is a Riemann map with $\varphi_w(w)=0$, and furthermore it is clear from \cref{eq:disk-automorphism} that $(z,w)\mapsto\varphi_w(z)$ is continuous on $\Omega_2\times\Omega_2$. In particular, as $\overline\Omega_1\times\overline\Omega_1$ is compact, the result follows by taking $\rho=\max_{z,w\in\overline\Omega_1}\abs{\varphi_w(z)}$
\end{proof}

The first part of this general scheme is to establish some control on the zero behavior of $f$ in some domain by its maximum on some larger domain in a way that we will be able to do uniformly over vertical translations. The main tool for this is the following consequence of Jensen's formula.

\begin{lemma}\label{lemma:zero-count-bound}
    Let $\Omega_1,\Omega_2$ be bounded simply connected domains in $\bC$ with $\overline\Omega_1\subseteq\Omega_2$, let $f$ be an analytic function on $\Omega_2$, and let $m>0$ be such that $\norm{f}_{\Omega_1}\geq m$. Then there exists a constant $C>0$, depending only on $\Omega_1,\Omega_2$ and $m$, such that
    $$N_f(\Omega_1)\leq C\log(\norm{f}_{\Omega_2}+1).$$
\end{lemma}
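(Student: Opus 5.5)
Since $\norm{f}_{\Omega_1}\geq m$ and $f$ is continuous on $\overline\Omega_1$ (which is compact in $\Omega_2$), there is a point $w\in\overline\Omega_1$ with $\abs{f(w)}\geq m$. We apply \cref{lemma:riemann-mapping-lemma} to obtain the Riemann map $\varphi_w:\Omega_2\to\bD$ with $\varphi_w(w)=0$ and $\varphi_w(\Omega_1)\subseteq\rho\bD$, where $\rho\in(0,1)$ depends only on $\Omega_1,\Omega_2$. Then $g=f\circ\varphi_w^{-1}$ is analytic on $\bD$ with $\abs{g(0)}=\abs{f(w)}\geq m$ and $\norm{g}_{\bD}=\norm{f}_{\Omega_2}$. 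Moreover, zeros of $f$ in $\Omega_1$ correspond bijectively, with multiplicity, to zeros of $g$ in $\varphi_w(\Omega_1)\subseteq\rho\bD$, since $\varphi_w$ is conformal. Hence $N_f(\Omega_1)\leq N_g(\rho\bD)$.

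We may assume $\norm{f}_{\Omega_2}<\infty$, since otherwise there is nothing to prove. Fix $r=(1+\rho)/2$, which depends only on $\rho$. Jensen's formula on the circle of radius $r$ gives
$$\log\abs{g(0)}+\sum_{a\in\sZ(g)\cap r\bD}\log\frac{r}{\abs{a}}=\frac{1}{2\pi}\int_0^{2\pi}\log\abs{g(re^{i\theta})}\,\dd\theta\leq\log\norm{f}_{\Omega_2}.$$
Each zero in $\rho\bD$ contributes at least $\log(r/\rho)>0$ to the sum, and all other terms in the sum are nonnegative. Therefore
$$N_f(\Omega_1)\log\frac{r}{\rho}\leq\log\norm{f}_{\Omega_2}-\log m\leq\log(\norm{f}_{\Omega_2}+1)+\log^+\frac1m.$$

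To obtain the stated form, we absorb the constant term $\log^+(1/m)$. Note that $\norm{f}_{\Omega_2}\geq\norm{f}_{\Omega_1}\geq m$, so $\log(\norm{f}_{\Omega_2}+1)\geq\log(m+1)>0$. Consequently
$$\log^+\frac1m\leq\frac{\log^+(1/m)}{\log(1+m)}\log(\norm{f}_{\Omega_2}+1).$$
This yields the lemma with
$$C=\Bigl(1+\frac{\log^+(1/m)}{\log(1+m)}\Bigr)\Big/\log\frac{1+\rho}{2\rho},$$
which depends only on $\Omega_1,\Omega_2$ (through $\rho$) and on $m$.

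The only delicate point is the uniformity of the constant: it must not depend on the point $w$ where $\abs{f}$ is large. This is exactly what the uniform $\rho$ supplied by \cref{lemma:riemann-mapping-lemma} guarantees. The remaining step that needs care is the absorption of $\log(1/m)$ into the logarithm, which is why the statement uses $\log(\norm{f}_{\Omega_2}+1)$ rather than $\log\norm{f}_{\Omega_2}$.
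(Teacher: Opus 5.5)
Your proof is correct and follows essentially the same route as the paper: apply Jensen's formula to $f\circ\varphi_w^{-1}$, using the uniform $\rho$ from \cref{lemma:riemann-mapping-lemma}, then absorb the $\log(1/m)$ term into $\log(\norm{f}_{\Omega_2}+1)$ via $\norm{f}_{\Omega_2}\geq m$. The differences are cosmetic. You fix $r=(1+\rho)/2$ rather than letting $r\uparrow1$, which gives a slightly worse constant, and you absorb the $m$-term additively rather than via $\norm{f}_{\Omega_2}/m\leq(\norm{f}_{\Omega_2}+1)^{1/\min\{m,1\}}$.
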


\begin{proof}
    Let $\{\varphi_w\}_{w\in\overline\Omega_1}$ and $\rho\in(0,1)$ be as in \cref{lemma:riemann-mapping-lemma}, and let $w\in\overline\Omega_1$ be such that $\abs{f(w)}\geq m$. Then, applying Jensen's formula (e.g., \cite{conway_functions_1978}*{Chapter XI}) to $f\circ\varphi_w^{-1}$ with $r\in(\rho,1)$, we can estimate
    \begin{multline*}
    \log m
        \leq\log\abs{f(\varphi_w^{-1}(0))}
        =\sum_{z\in\sZ(f\circ\varphi_w^{-1})\cap\rho\bD}\log\frac{\abs{z}}{r}+\int_\bT\log\abs{f(\varphi_w^{-1}(r\zeta))}\,\dd m(\zeta) \\
        \leq\sum_{s\in\sZ(f)\cap\Omega_1}\log\frac{\abs{\varphi_w(s)}}{r}+\log\norm{f}_{\Omega_2} 
        \leq N_f(\Omega_1)\log\frac{\rho}{r}+\log\norm{f}_{\Omega_2}.
    \end{multline*}
    The result now follows after letting $r\uparrow1$, rearranging, and estimating $\norm{f}_{\Omega_2}/m\leq(\norm{f}_{\Omega_2}+1)^{1/\min\{m,1\}}$.
\end{proof}

The first application of the above lemma comes in the form of controlling the modulus of $f$ from below by its zeros.

\begin{lemma}\label{lemma:zero-product-estimate}
    Let $\Omega_1,\Omega_2$ be bounded simply connected domains in $\bC$ with $\overline\Omega_1\subseteq\Omega_2$, let $f$ be a bounded analytic function on $\Omega_2$, and let $m>0$ be such that $\norm{f}_{\Omega_1}\geq m$. Then there exists a constant $\alpha>0$, depending only on $\Omega_1,\Omega_2$ and $m$, such that
    $$\abs{f(s)}\geq\frac{1}{(\norm{f}_{\Omega_2}+1)^\alpha}\prod_{s'\in\sZ(f)\cap\Omega_1}\abs{s-s'}$$
    for all $s\in\Omega_1$.
\end{lemma}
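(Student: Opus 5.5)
We will divide $f$ by its zeros in $\Omega_1$ and apply the maximum principle to the quotient on an intermediate domain. First choose an intermediate bounded simply connected domain $\Omega_3$ with smooth (say analytic Jordan) boundary and $\overline\Omega_1\subseteq\Omega_3$, $\overline\Omega_3\subseteq\Omega_2$. Such a domain exists since $\overline\Omega_1$ is compact in the simply connected $\Omega_2$: take the preimage under a Riemann map $\varphi:\Omega_2\to\bD$ of a disk $r\bD$ with $\varphi(\overline\Omega_1)\subseteq r\bD$. Set $\delta=\dist(\partial\Omega_3,\overline\Omega_1)>0$ and $D=\diam\Omega_2$, both depending only on $\Omega_1,\Omega_2$.

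Next define
$$P(s)=\prod_{s'\in\sZ(f)\cap\Omega_1}(s-s'),\qquad g=f/P.$$
Then $g$ is analytic on $\Omega_2$, since we remove only zeros of $f$ with their multiplicities. (If $f\equiv0$, the hypothesis $\norm{f}_{\Omega_1}\geq m$ fails, and $N_f(\Omega_1)$ is finite by \cref{lemma:zero-count-bound}.) On $\partial\Omega_3$ each factor satisfies $\abs{s-s'}\geq\delta$, so
$$\abs{g(s)}\leq\norm{f}_{\Omega_2}\delta^{-N}\quad\text{for } s\in\partial\Omega_3,\qquad N=N_f(\Omega_1).$$
However, this gives an upper bound on $g$, whereas we need a lower bound on $\abs{f}$, i.e.\ a lower bound on $\abs{g}$ in $\Omega_1$. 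Since $g$ may still have zeros in $\Omega_3\setminus\Omega_1$, we will instead work with $\log\abs{g}$ using Harnack's inequality, or more concretely with a Jensen/Poisson--Jensen argument on $\Omega_3$.

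For the lower bound, enlarge the product to include all zeros in $\Omega_3$. Let $\Omega_4$ be a further intermediate domain, $\overline\Omega_3\subseteq\Omega_4$, $\overline\Omega_4\subseteq\Omega_2$. Apply \cref{lemma:zero-count-bound} to the pair $(\Omega_1,\Omega_4)$; this is legitimate since $\norm{f}_{\Omega_1}\geq m$. It bounds $N_f(\Omega_3)$ by $C\log(\norm{f}_{\Omega_2}+1)$ (apply it to the pair $(\Omega_3,\Omega_4)$, noting $\norm{f}_{\Omega_3}\geq m$). Let $h=f/Q$, where $Q$ is the product over zeros in $\Omega_3$; then $h$ is zero-free on $\Omega_3$. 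Its modulus on $\partial\Omega_3$ is at most $\norm{f}_{\Omega_2}\delta'^{-N_f(\Omega_3)}$, where $\delta'$ is the distance from $\partial\Omega_3$ to the zeros considered. To keep $\delta'$ controlled, pass instead to a Blaschke-type normalization: compose with the Riemann map $\varphi$ of $\Omega_3$ and divide by the Blaschke factors $\psi_{\varphi(s')}\circ\varphi$, which have modulus $1$ on $\partial\Omega_3$. Then $\log\abs{h}$ is harmonic on $\Omega_3$ and bounded above by $\log\norm{f}_{\Omega_2}$. Moreover, at a point $w\in\overline\Omega_1$ with $\abs{f(w)}\geq m$, its value is at least $\log m$, because the Blaschke factors have modulus at most $1$. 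Harnack's inequality on $\overline\Omega_1$, applied to the positive harmonic function $\log\norm{f}_{\Omega_2}+1-\log\abs{h}$, then gives
$$\log\abs{h(s)}\geq -c\log(\norm{f}_{\Omega_2}+1)-c'$$
on $\Omega_1$, with $c,c'$ depending on $\Omega_1,\Omega_3,m$. The $c'$ term is absorbed into the power of $\norm{f}_{\Omega_2}+1$ only if we are careful, because $\norm{f}_{\Omega_2}+1\geq1$ but may be close to $1$. We handle this using $\norm{f}_{\Omega_2}\geq m$, so that $\norm{f}_{\Omega_2}+1\geq 1+m$ and a constant is dominated by a multiple of $\log(\norm{f}_{\Omega_2}+1)$.

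Finally, convert back. We have $\abs{f(s)}=\abs{h(s)}\prod\abs{B_{s'}(s)}$, with the product over zeros in $\Omega_3$. For zeros $s'\in\Omega_1$, use $\abs{\psi_{\varphi(s')}(\varphi(s))}\geq c''\abs{s-s'}$ on $\overline\Omega_1$; this holds because the map is conformal with derivative bounded below on the compact set, uniformly in $s'$ by continuity. For the zeros in $\Omega_3\setminus\Omega_1$, we need a lower bound on the Blaschke factors when $s\in\Omega_1$. This is the main obstacle, since such zeros may lie arbitrarily close to $\Omega_1$. My first attempt is to simply not divide out zeros outside $\Omega_1$, and instead keep those in $\Omega_3\setminus\Omega_1$ inside $h$ while bounding the error through $\abs{s-s'}\leq D$. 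If that fails, I would choose the product to run over $\Omega_1$ only, where the factors $\abs{s-s'}\leq D$ give trivially the upper bound needed. In either case, every factor constant of the form $(c'')^{N}$ or $D^{N}$ is at most $(\norm{f}_{\Omega_2}+1)^{C\abs{\log c''}}$ by the zero count, and the result follows with $\alpha$ collecting all these exponents.
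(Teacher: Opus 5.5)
There is a genuine gap, and it is exactly the step you call the main obstacle. After dividing out Blaschke factors for all zeros in $\Omega_3$, you need a \emph{lower} bound on $\prod_{s'\in\sZ(f)\cap(\Omega_3\setminus\Omega_1)}\abs{B_{s'}(s)}$ for $s\in\Omega_1$. Neither fallback supplies one. The bound $\abs{s-s'}\leq D$ controls these factors from above, which is the wrong direction. Keeping those zeros inside $h$ means $\log\abs{h}$ is no longer harmonic on $\Omega_3$, so Harnack's inequality is unavailable. Reverting to $g=f/P$ with $P$ over $\Omega_1$ only would require a lower bound on $\abs{g}$ in $\Omega_1$, which fails near zeros of $g$ lying just outside $\Omega_1$.

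No argument can close this, because the statement as written is false. Take $\Omega_1=\bD$, $\Omega_2=2\bD$, $m=1$ and $f(s)=s-1$. Then $\norm{f}_{\Omega_1}=2\geq m$ and $\sZ(f)\cap\Omega_1=\emptyset$, so the right-hand side equals $4^{-\alpha}$, whereas $\abs{f(s)}\to0$ as $s\to1$ within $\bD$. Replacing $f$ by $s-1-\varepsilon$ shows that zeros strictly outside $\overline\Omega_1$ but close to it already rule out any uniform $\alpha$. The paper's own proof has the same defect. It applies the Borel--Carath\'eodory inequality on the whole disk to $\log g\circ\varphi_w^{-1}$, but $\log g$ is only defined on $\Omega_1$, since $g$ may still vanish in $\Omega_2\setminus\Omega_1$.

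What is true, and what your Blaschke--Harnack argument actually proves, is the version in which the product runs over the zeros in an intermediate domain $\Omega_3$ with $\overline\Omega_1\subseteq\Omega_3$ and $\overline\Omega_3\subseteq\Omega_2$:
\[
\abs{f(s)}\geq\frac{1}{(\norm{f}_{\Omega_2}+1)^{\alpha}}\prod_{s'\in\sZ(f)\cap\Omega_3}\abs{s-s'},\qquad s\in\Omega_1.
\]
The pieces of your argument establish this as follows.
\begin{enumerate}[(i)]
    \item Harnack gives $\log\abs{h}\geq-c\log(\norm{f}_{\Omega_2}+1)$ on $\Omega_1$ once you use $\norm{f}_{\Omega_2}\geq m$.
    \item The estimate $\abs{B_{s'}(s)}\geq c''\abs{s-s'}$ holds uniformly for $s\in\overline\Omega_1$ and \emph{all} $s'\in\Omega_3$. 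For $s'$ near $\partial\Omega_3$, use that $\abs{\varphi(s')}\to1$ while $\varphi(\overline\Omega_1)$ is a compact subset of $\bD$.
    \item The factor $(c'')^{N_f(\Omega_3)}$ is absorbed via \cref{lemma:zero-count-bound}, applied to $\Omega_3$ and a further intermediate domain.
\end{enumerate}
This corrected form is all that \cref{lemma:log-estimate-c-family} needs. There one replaces $\sZ(f)\cap R$ by the zeros in a slightly larger rectangle $R'\subseteq\Omega$, whose number is still at most $C\log(\norm{f}_\Omega+1)$, and the rest of that proof is unchanged.
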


\begin{proof}
    Set
    $$g(s)=f(s)\prod_{s'\in\sZ(f)\cap\Omega_1}\frac{1}{s-s'}$$
    for $s\in\Omega_2$, and note that $g$ is a bounded analytic function on $\Omega_2$. By an application of the maximum modulus principle it satisfies
    $$\norm{g}_{\Omega_2}\leq\frac{\norm{f}_{\Omega_2}}{\dist(\Omega_1,\partial\Omega_2)^{N_f(\Omega_1)}}.$$
    Let now $\{\varphi_w\}_{w\in\overline\Omega_1}$ and $\rho\in(0,1)$ be as in \cref{lemma:riemann-mapping-lemma} and let $w\in\overline\Omega_1$ be such that $\abs{f(w)}\geq m$. Since $g$ is zero-free in $\Omega_1$, we can let $\log g$ denote a branch of its logarithm in $\Omega_1$. Apply the Borel--Carathéodory inequality (see, e.g., \cite{titchmarsh_theory_1958}*{Chapter V}) to the function $\log g\circ\varphi_w^{-1}-\log g(w)$ and bound its real part by its modulus to estimate
    \begin{multline*}
    \log\abs{g(s)}-\log\abs{g(w)}
        \leq\frac{2\rho}{1-\rho}(\log\norm{g\circ\varphi_w^{-1}}_\bD-\log\abs{g(w)}) \\
        \leq\frac{2\rho}{1-\rho}\left(\log\frac{\norm{f}_{\Omega_2}}{\dist(\Omega_1,\partial\Omega_2)^{N_f(\Omega_1)}}-\log\frac{m}{\diam(\Omega_1)^{N_f(\Omega_1)}}\right)
    \end{multline*}
    for $s\in\Omega_1$. From this we obtain the estimate
    \begin{multline*}
    \log\abs{g(s)}\geq N_f(\Omega_1)\left(\frac{2\rho}{1-\rho}\log\dist(\Omega_1,\partial\Omega_2)-\frac{1+\rho}{1-\rho}\log\diam(\Omega_1)\right) \\
    +\frac{1+\rho}{1-\rho}\log m-\frac{2\rho}{1-\rho}\log\norm{f}_{\Omega_2}
    \end{multline*}
    for $s\in\Omega_1$. From here, the first term can be estimated from below using \cref{lemma:zero-count-bound} and the rest can be estimated from below using that $\norm{f}_{\Omega_2}\geq m$, from which the result readily follows; we leave the details to the reader.
\end{proof}

The above two lemmas now suffice to prove the main estimate needed for the existence of the Jessen function. We will write
$$\abs{z}_\delta=\max\{\abs{z},\delta\}$$
for $\delta>0$ and $z\in\bC$.

\begin{lemma}\label{lemma:log-estimate-c-family}
    Let $\Omega$ be a bounded simply connected domain in $\bC$, let $\alpha<\beta$ and $T_1<T_2$ be such that $R=[\alpha,\beta]\times[T_1,T_2]\subseteq\Omega$, let $f$ be an analytic function on $\Omega$, and let $m>0$ be such that $\norm{f}_R\geq m$. Then there exists a family $\{C_\delta\}_{\delta>0}$ of positive numbers, depending only on $\Omega,R$ and $m$, such that
    $$\int_{T_1}^{T_2}(\log\abs{f(\sigma+it)}_\delta-\log\abs{f(\sigma+it)})\,\dd t\leq C_\delta\log^2(\norm{f}_\Omega+1)$$
    for all $\sigma\in[\alpha,\beta]$ and all $\delta>0$, and such that
    $$\lim_{\delta\downarrow0}C_\delta=0.$$
\end{lemma}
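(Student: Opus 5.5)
We will factor out the zeros of $f$ near $R$ and control the rest by \cref{lemma:zero-product-estimate}. First choose an intermediate bounded simply connected domain $\Omega_1$, say an open rectangle slightly larger than $R$, with $R\subseteq\Omega_1$ and $\overline\Omega_1\subseteq\Omega$. Since $\norm{f}_{\Omega_1}\geq\norm{f}_R\geq m$, \cref{lemma:zero-product-estimate} and \cref{lemma:zero-count-bound} apply with $(\Omega_1,\Omega)$, after shrinking $\Omega$ slightly if needed so that $f$ is bounded there; the constants depend only on these domains and $m$. We write $N=N_f(\Omega_1)$. Then
$$-\log\abs{f(s)}\leq\alpha\log(\norm{f}_\Omega+1)-\sum_{s'\in\sZ(f)\cap\Omega_1}\log\abs{s-s'}$$
for $s\in\Omega_1$, and $N\leq C\log(\norm{f}_\Omega+1)$.

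The integrand $\log\abs{f}_\delta-\log\abs{f}$ is nonnegative and vanishes where $\abs{f}\geq\delta$. On the set $E_\delta(\sigma)=\{t\in[T_1,T_2]:\abs{f(\sigma+it)}<\delta\}$ it equals $\log\delta-\log\abs{f}$. Applying the lower bound above, we get
$$\int_{E_\delta(\sigma)}\bigl(\log\delta+\alpha\log(\norm{f}_\Omega+1)\bigr)\,\dd t+\sum_{s'}\int_{E_\delta(\sigma)}\log\frac{1}{\abs{\sigma+it-s'}}\,\dd t.$$
To make this quantitative, we will instead use a pointwise bound. Using the product estimate, $\abs{f(s)}<\delta$ forces
$$\prod_{s'}\abs{s-s'}<\delta(\norm{f}_\Omega+1)^\alpha,$$
so some factor satisfies
$$\abs{s-s'}<\eta:=\bigl(\delta(\norm{f}_\Omega+1)^\alpha\bigr)^{1/N}.$$

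The main obstacle is the case split, since $\eta$ need not be small when $\norm{f}_\Omega$ is large. We handle this by splitting according to whether $\log(\norm{f}_\Omega+1)\leq\lambda(\delta)$ for a suitable threshold $\lambda(\delta)\to\infty$ chosen with $\alpha\lambda(\delta)\leq\tfrac12\log(1/\delta)$.

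In the large case, $\log(\norm{f}_\Omega+1)>\lambda(\delta)$. Here we use the crude bound without $\delta$: the integral is at most
$$\int_{T_1}^{T_2}\bigl(\log^+\delta-\log\abs{f}\bigr)\,\dd t\leq(T_2-T_1)\alpha\log(\norm{f}_\Omega+1)+N\sup_{s'}\int_{T_1}^{T_2}\log^+\frac{1}{\abs{t-\Im s'}}\,\dd t,$$
plus a bounded term coming from $\diam\Omega_1$. This is $O(\log(\norm{f}_\Omega+1))$, which is at most $\lambda(\delta)^{-1}\cdot O(\log^2(\norm{f}_\Omega+1))$, and $1/\lambda(\delta)\to0$.

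In the small case, $\log(\norm{f}_\Omega+1)\leq\lambda(\delta)$. We use $\abs{f}<\delta$ and the product bound differently. Each $t\in E_\delta(\sigma)$ has some zero $s'$ with
$$\abs{\sigma+it-s'}<\eta\leq\bigl(\delta^{1/2}\bigr)^{1/N}.$$
Also $N\leq C\lambda(\delta)$, so $\eta\leq\delta^{1/(2C\lambda(\delta))}$, which tends to $0$ if we take $\lambda(\delta)=\sqrt{\log(1/\delta)}$. Hence $E_\delta(\sigma)$ lies in a union of $N$ intervals of length $2\eta$. The integral of $\log\delta-\log\abs f$ over it is bounded by
$$N\int_{-\eta}^{\eta}\log\frac{1}{\abs{u}}\,\dd u+2N\eta\,\alpha\log(\norm{f}_\Omega+1).$$
This is at most $\epsilon(\eta)\log^2(\norm{f}_\Omega+1)$ times constants, provided $\log(\norm{f}_\Omega+1)$ is bounded below. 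The case where it is small is trivial because $\norm{f}_\Omega\geq m$. Here $\epsilon(\eta)=O(\eta\log(1/\eta))\to0$.

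Taking $C_\delta$ to be the maximum of the two resulting coefficients gives the claim with $C_\delta\to0$, all constants depending only on $\Omega,R,m$.
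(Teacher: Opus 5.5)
Your proposal is correct and follows essentially the paper's argument: the product lower bound and zero count from \cref{lemma:zero-product-estimate,lemma:zero-count-bound}, a crude $O(\log(\norm{f}_\Omega+1))$ bound absorbed into $\log^2$ when $\norm{f}_\Omega$ is large, and, when $\norm{f}_\Omega$ is small, localization of $\{\abs{f}<\delta\}$ near boundedly many zeros. The only real difference is bookkeeping: you tie the threshold to $\delta$ via $\lambda(\delta)=\sqrt{\log(1/\delta)}$, which gives an explicit $C_\delta$, whereas the paper fixes $\varepsilon$ and then chooses $M_0(\varepsilon)$ and $\delta_0$.

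One slip to fix: each of the $N$ logarithmic integrals runs over a set of measure up to $2N\eta$, so the correct bound is $N^2\int_{-\eta}^{\eta}\log\frac{1}{\abs{u}}\,\dd u$ rather than $N\int_{-\eta}^{\eta}\log\frac{1}{\abs{u}}\,\dd u$. Relatedly, in the crude case you should use positive parts, $\log\abs{f}_\delta-\log\abs{f}\leq\log^+\delta+\log^+(1/\abs{f})$. Both corrections are harmless: since $N^2\leq C^2\log^2(\norm{f}_\Omega+1)$, you still get $\epsilon(\eta)\log^2(\norm{f}_\Omega+1)$, and this $N^2$ is exactly where the square in the statement comes from.
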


\begin{proof}
    Let $\varepsilon>0$. The goal is to show that there exists a $\delta_0>0$, depending only on $\Omega,R$ and $m$, such that if $\delta\in(0,\delta_0]$, then
    \begin{equation}\label{eq:log-integral-epsilon}
        \int_{T_1}^{T_2}(\log\abs{f(\sigma+it)}_\delta-\log\abs{f(\sigma+it)})\,\dd t\leq\varepsilon\log^2(\norm{f}_\Omega+1)
    \end{equation}
    for all $\sigma\in[\alpha,\beta]$. Let $J_\delta(\sigma)$ denote the left-hand side in \cref{eq:log-integral-epsilon}. Use first \cref{lemma:zero-count-bound,lemma:zero-product-estimate} to find constants $C,\alpha>0$, depending only on $\Omega,R$ and $m$, such that
    \begin{equation}\label{eq:zeros-and-product-estimates}
    N_f(R)\leq C\log(\norm{f}_\Omega+1),\quad\abs{f(s)}\geq\frac{1}{(\norm{f}_\Omega+1)^\alpha}\prod_{s'\in\sZ(f)\cap R}\abs{s-s'}
    \end{equation}
    for all $s\in R$. Let $\delta\in(0,\delta_0]$. Denote by $I_{\sigma,\delta}$ the set of all $t\in[T_1,T_2]$ such that $\abs{f(\sigma+it)}\leq\delta$, and estimate
    \begin{equation}\label{eq:j-delta-first-estimate}
    J_\delta(\sigma)\leq\abs{I_{\sigma,\delta}}(\log\delta+\alpha\log(\norm{f}_\Omega+1))+\sum_{s'\in\sZ(f)\cap R}\int_{I_{\sigma,\delta}}\log\frac{1}{\abs{t-\Im s'}}\,\dd t
    \end{equation}
    where we use $\abs{\,\cdot\,}$ to denote Lebesgue measure. First, using that $\abs{I_{\sigma,\delta}}\leq T_2-T_1$ and the estimate on $N_f(R)$ from \cref{eq:zeros-and-product-estimates}, we can estimate
    $$J_\delta(\sigma)\leq\left((T_2-T_1)\alpha+2C\int_0^1\log\frac{1}{\tau}\,\dd\tau\right)\log(\norm{f}_\Omega+1).$$
    In particular, we see that there exists an $M_0>0$, depending only on $\Omega,R$ and $m$, such that \cref{eq:log-integral-epsilon} holds if $\norm{f}_\Omega>M_0$ for any choice of $\delta$, and so we may from here on suppose that $\norm{f}_\Omega\leq M_0$. Set $N_0=C\log(M_0+1)$ and note that then $N_f(R)\leq N_0$. Define $r_\delta$ by
    $$\delta=\frac{r_\delta^{N_0}}{(M_0+1)^\alpha}.$$
    Note that $r_\delta$ decreases to $0$ as $\delta\downarrow0$. Fix now $\delta_0\in(0,1)$ to be chosen later, and let $\delta\in(0,\delta_0]$. Observe that if $t\in I_{\sigma,\delta}$, then $\dist(\sigma+it,\sZ(f)\cap R)\leq r_\delta$ as a consequence of the second estimate in \cref{eq:zeros-and-product-estimates}, and so
    $$\abs{I_{\sigma,\delta}}\leq N_f(R)r_\delta\leq Cr_{\delta_0}\log(\norm{f}_\Omega+1).$$
    Choosing $\delta_0$ sufficiently small such that $r_{\delta_0}<1$, we can use this together with \cref{eq:j-delta-first-estimate} and the inequality
    $$\int_{I_{\sigma,\delta}}\log\frac{1}{\abs{t-\Im s'}}\,\dd t\leq 2N_f(R)\int_0^{r_{\delta_0}}\log\frac{1}{\tau}\,\dd\tau$$
    to estimate
    $$J_\delta(\sigma)\leq \left(\alpha r_{\delta_0}+2C\int_0^{r_{\delta_0}}\log\frac{1}{\tau}\,\dd\tau\right)C\log^2(\norm{f}_\Omega+1).$$
    From here the result is clear.
\end{proof}

All the above lemmas have been purely of a function theoretic nature, and so one is naturally led to wonder where the properties of our situation enter. As mentioned before, the goal is to control our functions in a way that is invariant under vertical translations. In view of the above lemmas, we thus require the existence of a constant $m>0$ such that
$$\norm{V_\tau f}_R\geq m$$
for all $\tau\in\bR$, for a well-chosen rectangle $R$, where we write
$$V_\tau f(s)=f(s+i\tau)$$
for the vertical translations of $f$. This is where almost periodicity enters. In particular, our functions are almost periodic on $\bC_{1/2+\varepsilon}$ for all $\varepsilon>0$, and so we should choose $R$ sufficiently large so that it intersects such a half-plane. If we do this, then the control we need comes from the following lemma. We will use the notation
$$\bS_{\alpha,\beta}=\{s\in\bC:\alpha<\Re s<\beta\}$$
for vertical strips. Almost periodic functions in vertical strips are defined in the natural way.

\begin{lemma}\label{lemma:almost-periodic-sup-bound-below}
    Let $f$ be an analytic almost periodic function in the vertical strip $\bS_{\alpha,\beta}$ that is not identically zero, and let $R=(\alpha,\beta)\times(T_1,T_2)$ for some $T_1<T_2$. Then there exists an $m>0$ such that
    $$\norm{V_\tau f}_R\geq m$$
    for all $\tau\in\bR$.
\end{lemma}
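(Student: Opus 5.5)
I would argue by contradiction, using the compactness of the family of vertical translations that comes with almost periodicity. Suppose no such $m>0$ exists. Then there is a sequence $\tau_n\in\bR$ with $\norm{V_{\tau_n}f}_R\to0$. Since $f$ is almost periodic on $\bS_{\alpha,\beta}$, the family $\{V_\tau f:\tau\in\bR\}$ is precompact in the uniform norm on the strip. Passing to a subsequence, I may therefore assume that $V_{\tau_n}f\to g$ uniformly on $\bS_{\alpha,\beta}$ for some function $g$. Uniform limits of analytic functions are analytic, so $g$ is analytic on the strip, and $g=0$ on the open rectangle $R$ because $\norm{V_{\tau_n}f}_R\to0$. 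By the identity theorem, $g\equiv0$ on the whole (connected) strip $\bS_{\alpha,\beta}$.

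It remains to see that this contradicts $f\not\equiv0$. Pick $s_0\in\bS_{\alpha,\beta}$ with $\abs{f(s_0)}=\eta>0$. Uniform convergence to $0$ gives
$$\sup_{s\in\bS_{\alpha,\beta}}\abs{f(s+i\tau_n)}\to0.$$
On the other hand, $s_0-i\tau_n$ lies in the strip, so $\sup_s\abs{f(s+i\tau_n)}\geq\abs{f(s_0)}=\eta$ for every $n$. This is the desired contradiction.

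The only point that needs care is the compactness step. The paper only states the precompactness characterization for half-planes, and I will use the analogous statement for strips, which is standard for almost periodic functions (Bochner's criterion). If one prefers to avoid it, the argument can be run directly from relative density. Given $\varepsilon=\eta/2$, there is an $L>0$ such that every interval of length $L$ contains an $\varepsilon$-almost period $\tau$. Choosing such a $\tau$ close to $\tau_n$, uniformly in $n$, we get $\abs{V_{\tau_n}f-V_{\tau_n-\tau}f}\le\varepsilon$ on the strip, with $\tau_n-\tau$ ranging in a bounded set. A subsequence of $\tau_n-\tau$ then converges, and uniform continuity of $f$ lets us conclude as before. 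I expect this step to be the only mild obstacle; the identity-theorem step is routine.
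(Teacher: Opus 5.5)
Your main argument is correct and is essentially the paper's proof: by contradiction, extract a uniformly convergent subsequence of translates via almost periodicity, use the identity theorem to show the limit vanishes on the whole strip, and contradict this with $\norm{V_\tau f}_{\bS_{\alpha,\beta}}=\norm{f}_{\bS_{\alpha,\beta}}>0$. The optional relative-density fallback does not close as written: with a fixed $\varepsilon=\eta/2$ it only yields a genuine translate $V_{\sigma^*}f$ with $\norm{V_{\sigma^*}f}_R\le\eta/2$, which contradicts nothing (under your hypothesis the vanishing limit lies in the hull of $f$ and is never an actual translate), so avoiding Bochner's criterion would require redoing its total-boundedness argument over all $\varepsilon>0$.
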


\begin{proof}
    Suppose for a contradiction that such an $m$ does not exist. Then we can find a sequence $\{\tau_n\}_{n\in\bZ^+}$ of real numbers such that
    \begin{equation}\label{eq:vertical-max-estimate}
    \norm{V_{\tau_n}f}_R<\frac{1}{n}
    \end{equation}
    for all $n\in\bZ^+$. By almost periodicity, we can find a subsequence $\{\tau_{n_k}\}_{k\in\bZ^+}$ such that $\{V_{\tau_{n_k}}f\}_{k\in\bZ^+}$ converges uniformly on $\bS_{\alpha,\beta}$, by \cref{eq:vertical-max-estimate} the limit function must be identically zero in $R$, and by analytic continuation it has to be identically zero in all of $\bS_{\alpha,\beta}$. But then
    $$\norm{f}_{\bS_{\alpha,\beta}}=\lim_{k\to\infty}\norm{V_{\tau_{n_k}}f}_{\bS_{\alpha,\beta}}=0,$$
    contradicting that $f$ is not identically zero.
\end{proof}

We will in particular need the following immediate consequence of \cref{lemma:almost-periodic-sup-bound-below}; the details are left to the reader.

\begin{lemma}\label{lemma:sup-bound-below-sequence}
    Let $f$ be an analytic almost periodic function in the vertical strip $\bS_{\alpha,\beta}$ that is not identically zero, and let $\{f_n\}_{n\in\bZ^+}$ be a sequence of analytic almost periodic functions in $\bS_{\alpha,\beta}$, none identically zero, that converges uniformly to $f$ in $\bS_{\alpha,\beta}$, and let $R=(\alpha,\beta)\times(T_1,T_2)$ for some $T_1<T_2$. Then there exists an $m>0$ such that
    $$\norm{V_\tau f}_R\geq m,\quad\norm{V_\tau f_n}_R\geq m$$
    for all $\tau\in\bR$ and all $n\in\bZ^+$.
\end{lemma}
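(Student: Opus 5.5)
The plan is to derive this from \cref{lemma:almost-periodic-sup-bound-below}, applied to $f$, combined with the uniform convergence $f_n\to f$ to transfer the lower bound to all but finitely many $f_n$. The finitely many remaining $f_n$ are then handled one at a time by the same lemma, and we take a minimum.

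First, \cref{lemma:almost-periodic-sup-bound-below} gives $m_0>0$ such that $\norm{V_\tau f}_R\geq m_0$ for all $\tau\in\bR$. By uniform convergence on $\bS_{\alpha,\beta}$, choose $n_0$ so that $\norm{f_n-f}_{\bS_{\alpha,\beta}}\leq m_0/2$ for all $n\geq n_0$. Vertical translation maps $\bS_{\alpha,\beta}$ onto itself, so $\norm{V_\tau f_n-V_\tau f}_R\leq\norm{f_n-f}_{\bS_{\alpha,\beta}}\leq m_0/2$ for every $\tau$. The triangle inequality for the sup norm on $R$ then gives
$$\norm{V_\tau f_n}_R\geq\norm{V_\tau f}_R-\norm{V_\tau f_n-V_\tau f}_R\geq m_0/2$$
for all $\tau\in\bR$ and all $n\geq n_0$.

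Next, for each $n<n_0$, the function $f_n$ is analytic, almost periodic on $\bS_{\alpha,\beta}$ and not identically zero. So \cref{lemma:almost-periodic-sup-bound-below} applied to $f_n$ yields $m_n>0$ with $\norm{V_\tau f_n}_R\geq m_n$ for all $\tau$. Setting
$$m=\min\{m_0/2,m_1,\dots,m_{n_0-1}\}>0$$
gives both required inequalities simultaneously, since $m\leq m_0$ handles $f$ itself.

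There is no real obstacle here. The only point needing care is that the uniform estimate $\norm{f_n-f}$ must hold on the whole strip rather than just on $R$, so that it survives every vertical translation $V_\tau$. This is exactly what the hypothesis of uniform convergence on $\bS_{\alpha,\beta}$ provides.
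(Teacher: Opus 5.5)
Your proposal is correct and is the routine deduction from \cref{lemma:almost-periodic-sup-bound-below} that the paper leaves to the reader. You apply the lemma to $f$ and use $\norm{V_\tau f_n-V_\tau f}_R\leq\norm{f_n-f}_{\bS_{\alpha,\beta}}$ to handle the tail uniformly in $\tau$, then treat the finitely many remaining $f_n$ with the same lemma and take a minimum.
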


The last main ingredient is the following lemma on mean values of maxima over compact sets. For notational convenience, if $f$ is a complex-valued function on the real line, then we write
$$M_p(f)=\left(\lim_{T\to\infty}\frac{1}{2T}\int_{-T}^T\abs{f(t)}^p\,\dd t\right)^{1/p}$$
for its $p$-mean whenever it exists. We shall also write $\norm{Vf}_\Omega$ for the mapping $\tau\mapsto\norm{V_\tau f}_\Omega$.

\begin{lemma}\label{lemma:sup-mean-values}
    Let $1\leq p<\infty$, let $f$ be an analytic function in a vertical strip $\bS_{\alpha,\beta}$, let $\{f_n\}_{n\in\bZ^+}$ be a sequence of analytic almost periodic functions in $\bS_{\alpha,\beta}$, and suppose that
    $$\lim_{n\to\infty}\sup_{\alpha<\sigma<\beta}\limsup_{T\to\infty}\frac{1}{2T}\int_{-T}^T\abs{f_n(\sigma+it)-f(\sigma+it)}^p\,\dd t=0.$$
    If $K$ is a compact subset of $\bS_{\alpha,\beta}$, then the $p$-means
    $$M_p(\norm{Vf}_K),\quad M_p(\norm{Vf_n}_K),\quad M_p(\norm{Vf_n-Vf}_K)$$
    exist for all $n\in\bZ^+$, and
    $$\lim_{n\to\infty}M_p(\norm{Vf_n}_K)=M_p(\norm{Vf}_K),\quad\lim_{n\to\infty}M_p(\norm{Vf_n-Vf}_K)=0.$$
\end{lemma}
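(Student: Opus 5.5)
The approach is to use the almost periodicity of each $f_n$ to get existence of $M_p(\norm{Vf_n}_K)$ directly. Existence for $f$ then follows by a Cauchy-type argument, using a local sub-mean-value estimate that converts sup norms over $K$ into vertical $L^p$ integrals.

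\emph{Step 1: the functions $\tau\mapsto\norm{V_\tau f_n}_K$.} Since $f_n$ is almost periodic on $\bS_{\alpha,\beta}$, the function $\tau\mapsto\norm{V_\tau f_n}_K$ is a bounded almost periodic function on $\bR$. Indeed,
$$\babs{\norm{V_\tau f_n}_K-\norm{V_{\tau'}f_n}_K}\leq\sup_{s\in\bS_{\alpha,\beta}}\abs{f_n(s+i\tau)-f_n(s+i\tau')},$$
so $\varepsilon$-almost periods of $f_n$ are $\varepsilon$-almost periods of this function. The same bound with $\tau'=\tau+h$ gives uniform continuity. Composing with $x\mapsto x^p$, which is uniformly continuous on bounded sets, shows that $\norm{Vf_n}_K^p$ is almost periodic, so its mean exists. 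The same argument applies to $\norm{Vf_n-Vf_m}_K$, since $f_n-f_m$ is almost periodic.

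\emph{Step 2: a local estimate.} Choose $\delta>0$ with $K_\delta=\{s:\dist(s,K)\leq\delta\}\subseteq\bS_{\alpha,\beta}$. Let $K\subseteq[\alpha',\beta']\times[-L,L]$. For analytic $g$ in the strip, subharmonicity of $\abs{g}^p$ and the area sub-mean-value property give
$$\norm{V_\tau g}_K^p\leq\frac{1}{\pi\delta^2}\int_{\alpha'-\delta}^{\beta'+\delta}\int_{\tau-L-\delta}^{\tau+L+\delta}\abs{g(\sigma+it)}^p\,\dd t\,\dd\sigma.$$
Averaging over $\tau\in[-T,T]$ and using Fubini, each $t$ is counted in a window of length $2(L+\delta)$. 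Hence
$$\limsup_{T\to\infty}\frac{1}{2T}\int_{-T}^T\norm{V_\tau g}_K^p\,\dd\tau\leq C_{K,\delta}\sup_{\alpha<\sigma<\beta}\limsup_{T\to\infty}\frac{1}{2T}\int_{-T}^T\abs{g(\sigma+it)}^p\,\dd t.$$
Passing the limsup inside the $\sigma$-integral requires a dominating bound. I expect this to be the main technical obstacle. I would handle it with Fatou's lemma applied to $\sup_{\sigma}\sup_{T}$ minus the integrand, or simply by bounding the inner $\limsup_T$ by $\sup_{T\geq T_0}$ and then letting $T_0\to\infty$. If the hypothesis only provides the $\limsup$ pointwise in $\sigma$, I would restrict to slightly smaller strips and use reverse Fatou, which needs uniform boundedness of $\frac{1}{2T}\int\abs{g}^p$ for large $T$. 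Failing that, I would rephrase the bound using $\sup_{T\geq T_0}$ and take $n$ large first.

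\emph{Step 3: conclusion.} Apply Step 2 with $g=f_n-f$. This shows that the upper $p$-mean of $\norm{Vf_n-Vf}_K$ tends to $0$. By Minkowski's inequality on $[-T,T]$,
$$\Abs{\Bigl(\tfrac{1}{2T}\int_{-T}^T\norm{V_\tau f}_K^p\,\dd\tau\Bigr)^{1/p}-\Bigl(\tfrac{1}{2T}\int_{-T}^T\norm{V_\tau f_n}_K^p\,\dd\tau\Bigr)^{1/p}}\leq\Bigl(\tfrac{1}{2T}\int_{-T}^T\norm{V_\tau f_n-V_\tau f}_K^p\,\dd\tau\Bigr)^{1/p}.$$
The second average on the left converges by Step 1. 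Therefore the limsup and liminf of the first average differ by at most $2\limsup_T(\cdots)^{1/p}$, which is $o(1)$ as $n\to\infty$. So $M_p(\norm{Vf}_K)$ exists, and it is the limit of $M_p(\norm{Vf_n}_K)$.

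For $\norm{Vf_n-Vf}_K$, note that $\norm{Vf_m-Vf}_K\to\norm{Vf_n-Vf}_K$ as $m\to\infty$ in upper $p$-mean, because the triangle inequality gives $\babs{\norm{V(f_n-f_m)}_K-\norm{V(f_n-f)}_K}\leq\norm{V(f_m-f)}_K$. The approximants $\norm{V(f_n-f_m)}_K$ have means by Step 1. Repeating the Cauchy argument therefore gives existence of $M_p(\norm{Vf_n-Vf}_K)$. Its limit is $0$ by Step 2.
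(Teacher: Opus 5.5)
Your proposal follows the paper's proof step for step. Step 1 is the almost periodicity of $\tau\mapsto\norm{V_\tau f_n}_K$. Step 2 is the area sub-mean-value inequality followed by Fubini. Step 3 is the reverse triangle inequality with a Cauchy argument, and the last assertion is handled through the almost periodic approximants $\norm{Vf_n-Vf_m}_K$ with $m\to\infty$. Steps 1 and 3 are correct as written.

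The obstacle you flag in Step 2 is a real gap, and the paper's proof does not close it either; it only says the bound follows from ``Fubini's theorem and the assumptions''. For fixed $n$ put $h_{n,T}(\sigma)=\frac{1}{2T}\int_{-T}^T\abs{f_n(\sigma+it)-f(\sigma+it)}^p\,\dd t$. The hypothesis as literally stated puts $\limsup_T$ inside $\sup_\sigma$. Moving $\limsup_T$ past the $\sigma$-integral is then a reverse-Fatou step, which needs a majorant of $h_{n,T}$ that is integrable in $\sigma$ and valid for all large $T$.

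None of your suggested fixes supplies such a majorant:
\begin{itemize}
\item Finiteness of $\sup_\sigma\sup_{T\geq T_0}h_{n,T}(\sigma)$ is not part of the hypothesis.
\item The function $\sigma\mapsto\sup_{T\geq T_0}h_{n,T}(\sigma)$ is only known to be pointwise finite and lower semicontinuous. So the decreasing limit $T_0\to\infty$ cannot be taken under the integral without more information.
\end{itemize}

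The way out is your last option, made precise. Assume instead that $\lim_{n\to\infty}\limsup_{T\to\infty}\sup_{\alpha<\sigma<\beta}h_{n,T}(\sigma)=0$. Then, for each $T$, bound the $\sigma$-integral by the length of the $\sigma$-interval times $\frac{T+L+\delta}{T}\sup_\sigma h_{n,T+L+\delta}(\sigma)$ before taking $\limsup_T$. With this, Step 2 needs no interchange of limits at all.

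Nothing is lost by this change. The lemma is only invoked in \cref{lemma:delta-log-limits}, with $f_n=R_N^kf$. There \cref{lemma:riesz-means-convergence}(c) gives the convergence even in the stronger form $\lim_{N\to\infty}\sup_{\sigma>\kappa}\sup_{T\geq1}\frac{1}{2T}\int_{-T}^T\abs{R_N^kf(\sigma+it)-f(\sigma+it)}^p\,\dd t=0$.
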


\begin{proof}
    As $K$ is a compact subset of $\bS_{\alpha,\beta}$, we can find a closed rectangle $R=[a,b]\times[-T,T]$ with $K\subseteq R\subseteq\bS_{\alpha,\beta}$ such that $\delta=\dist(K,\partial R)>0$. Using the sub-mean value inequality, we can then estimate
    \begin{multline*}
        \abs{V_\tau f_n(s)-V_\tau f(s)}^p\leq\frac{1}{\pi\delta^2}\iint\limits_{B_\delta(s)}\abs{V_\tau f_n(\sigma+it)-V_\tau f(\sigma+it)}^p\,\dd\sigma\,\dd t \\
        \leq\frac{1}{\pi\delta^2}\iint_R\abs{V_\tau f_n(\sigma+it)-V_\tau f(\sigma+it)}^p\,\dd\sigma\,\dd t
    \end{multline*}
    for all $s\in K$ and $\tau\in\bR$. Taking a supremum over $s\in K$ and integrating, it follows from this, together with Fubini's theorem and the assumptions, that
    \begin{equation}\label{eq:mean-value-limsup-zero}
    \lim_{n\to\infty}\limsup_{T\to\infty}\frac{1}{2T}\int_{-T}^T\norm{V_\tau f_n-V_\tau f}_K^p\,\dd\tau=0.
    \end{equation}
    Next, as $\babs{\norm{V_\tau f_n}_K-\norm{V_\tau f}_K}\leq\norm{V_\tau f_n-V_\tau f}_K$ for all $\tau\in\bR$, it follows from \cref{eq:mean-value-limsup-zero} and the reverse triangle inequality that
    \begin{equation}\label{eq:mean-value-different-limsup-zero}
    \lim_{n\to\infty}\limsup_{T\to\infty}\Abs{\left(\frac{1}{2T}\int_{-T}^T\norm{V_\tau f_n}_K^p\,\dd\tau\right)^{1/p}-\left(\frac{1}{2T}\int_{-T}^T\norm{V_\tau f}_K^p\,\dd\tau\right)^{1/p}}=0.
    \end{equation}
    As $f_n$ is almost periodic for each $n\in\bZ^+$, the function $\norm{Vf_n}_K$ is also almost periodic (as a function on the real line), and as a consequence $M_p(\norm{Vf_n}_K)$ always exists. Combining this with \cref{eq:mean-value-different-limsup-zero} it follows that $M_p(\norm{Vf}_K)$ exists, and that
    $$\lim_{n\to\infty}M_p(\norm{Vf_n}_K)=M_p(\norm{Vf}_K).$$
    For the last assertion, one can argue similarly after observing that
    $$\lim_{m\to\infty}\limsup_{T\to\infty}\Abs{\left(\frac{1}{2T}\int_{-T}^T\norm{V_\tau f_n-V_\tau f_m}_K^p\,\dd\tau\right)^{1/p}-\left(\frac{1}{2T}\int_{-T}^T\norm{V_\tau f_n-V_\tau f}_K^p\,\dd\tau\right)^{1/p}}=0.$$
\end{proof}

We can now prove the main lemma for the proof of the existence of the Jessen function. This lemma is essentially baked into the proof of \cite{borchsenius_mean_1948}*{Theorem 1}; however, we adapt it to our situation using \cref{lemma:riesz-means-convergence}. It will both play a role in the proof of \cref{theorem:existence-of-jessen-function}, and later in the proof of the Hardy--Stein identity in the case of $p=1$.

\begin{lemma}\label{lemma:delta-log-limits}
    Let $1\leq p<\infty$ and let $f$ be a somewhere convergent Dirichlet series that is not identically zero with analytic continuation to $\bC_0$ satisfying
    $$\sup_{\sigma>0}\sup_{T\geq1}\frac{1}{2T}\int_{-T}^T\abs{f(\sigma+it)}^p\,\dd t<\infty.$$
    Then, for all $0<\alpha<\beta$,
    $$\lim_{\delta\downarrow0}\limsup_{T\to\infty}\sup_{\alpha<\sigma<\beta}\frac{1}{2T}\int_{-T}^T(\log\abs{f(\sigma+it)}_\delta-\log\abs{f(\sigma+it)})\,\dd t=0$$
    and, for $k>3$,
    $$\lim_{\delta\downarrow0}\limsup_{N\to\infty}\limsup_{T\to\infty}\sup_{\alpha<\sigma<\beta}\frac{1}{2T}\int_{-T}^T(\log\abs{R_N^kf(\sigma+it)}_\delta-\log\abs{R_N^kf(\sigma+it)})\,\dd t=0.$$
\end{lemma}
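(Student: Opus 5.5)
The plan is to cover the strip $\bS_{\alpha,\beta}$ by translates of a fixed rectangle and apply \cref{lemma:log-estimate-c-family} uniformly over vertical translations. The needed lower bound $m$ comes from almost periodicity to the right of $1/2$. The resulting bound is then averaged in $\tau$ using \cref{lemma:sup-mean-values}.

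First, fix $\beta'>\max\{\beta,1\}$ and $0<\alpha'<\alpha$, and set $\Omega=(\alpha',\beta'+1)\times(-1,2)$ and $R=[\alpha,\beta']\times[0,1]$. By \cref{lemma:riesz-means-convergence}(b), $R_N^kf\to f$ uniformly on the strip $\bS_{\beta'-\eta,\beta'+1}$ for small $\eta>0$ with $\beta'-\eta>1/2$, since the Dirichlet series of $f$ converges uniformly there. The functions $f$ and $R_N^kf$ are analytic almost periodic in that strip, and $f\not\equiv0$ forces $R_N^kf\not\equiv0$ for large $N$. So \cref{lemma:sup-bound-below-sequence}, applied to a small rectangle inside $R$ lying in that strip, gives an $m>0$ with $\norm{V_\tau f}_R\geq m$ and $\norm{V_\tau R_N^kf}_R\geq m$ for all $\tau$ and all large $N$.

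Next, \cref{lemma:log-estimate-c-family} applied to $V_jf$ for $j\in\bZ$ gives constants $C_\delta\to0$, independent of $j$, $\sigma$ and $N$, such that
$$\int_j^{j+1}(\log\abs{f(\sigma+it)}_\delta-\log\abs{f(\sigma+it)})\,\dd t\leq C_\delta\log^2(\norm{V_jf}_\Omega+1)$$
for $\sigma\in[\alpha,\beta]$, and likewise for $R_N^kf$. Summing over $j$ with $\abs{j}\leq T+1$ bounds the mean over $[-T,T]$ by $C_\delta$ times an average of $\log^2(\norm{V_jf}_\Omega+1)$. This bound is uniform in $\sigma$, which handles the supremum over $\sigma$.

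It remains to show that this average is bounded as $T\to\infty$, uniformly in $N$ for the Riesz means. Use $\log^2(x+1)\leq C_p(1+x^p)$ and compare the discrete average with a continuous one, for instance by bounding $\norm{V_jf}_\Omega\leq\norm{V_\tau f}_{K}$ for $\tau\in[j,j+1]$ with $K=\overline\Omega\cup(\overline\Omega-i)$, a compact set in $\bS_{\alpha'',\beta'+2}$ for some $0<\alpha''<\alpha'$. By \cref{lemma:riesz-means-convergence}(c) and the Carlson condition, \cref{lemma:sup-mean-values} applies on this strip. It shows that $M_p(\norm{Vf}_K)$ exists, which bounds the first limsup. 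It also shows that $M_p(\norm{VR_N^kf}_K)\to M_p(\norm{Vf}_K)$, so $\sup_N M_p(\norm{VR_N^kf}_K)<\infty$ after discarding finitely many $N$. This bounds the second expression by $C_\delta$ times a constant, uniformly in $N$. Letting $\delta\downarrow0$ proves both statements.

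The main obstacle is the uniform lower bound $m$. It requires the rectangle to reach into the region of almost periodicity and the Riesz means to converge uniformly there, not merely in mean. Step (b) of \cref{lemma:riesz-means-convergence} supplies exactly this, but one must check that it holds on a genuine strip $\bS$ where \cref{lemma:sup-bound-below-sequence} applies.
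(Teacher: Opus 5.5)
Your proposal is correct and follows essentially the same route as the paper: it gets a uniform lower bound $m$ from \cref{lemma:sup-bound-below-sequence} on a rectangle reaching into the half-plane of uniform convergence, applies \cref{lemma:log-estimate-c-family} on unit-height vertical translates, and then uses \cref{lemma:sup-mean-values} to bound the mean of $\log^2(\norm{V_\tau f}_K+1)$ uniformly in $N$. The differences are cosmetic. You sum over integer translates and then pass to a continuous average by enlarging the compact set, whereas the paper averages over all $\tau$ directly via Fubini. You also use $\log^2(x+1)\leq C_p(1+x^p)$ where the paper uses $\sup_{x\geq m}\log^2(x+1)/x^p$.
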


\begin{proof}
    Fix $0<\alpha<\beta$ and $k>3$. Suppose without loss of generality that $\alpha<1<\beta$ and that for all $N\geq2$, the function $R_N^kf$ is not identically zero. By \cref{lemma:sup-bound-below-sequence}, there exists a $m>0$ such that
    $$\norm{V_\tau f}_{[1,\beta]\times[0,1]}\geq m,\quad\norm{V_\tau R_N^kf}_{[1,\beta]\times[0,1]}\geq m$$
    for all $\tau\in\bR$ and all $N\geq2$. Let $K$ be any compact subset of $\bC_0$ containing $R=[\alpha,\beta]\times[0,1]$ and such that $\dist(R,\partial K)>0$, and apply \cref{lemma:log-estimate-c-family} to find a family $\{C_\delta\}_{\delta>0}$ of positive numbers with $C_\delta\to0$ as $\delta\downarrow0$, depending only on $K$, $R$ and $m$, with which we can estimate
    \begin{equation}\label{eq:unit-interval-integral-bound}
        \int_{\tau}^{\tau+1}(\log\abs{f(\sigma+it)}_\delta-\log\abs{f(\sigma+it)})\,\dd t\leq C_\delta\log^2(\norm{V_\tau f}_K+1)
    \end{equation}
    and
    $$\int_{\tau}^{\tau+1}(\log\abs{R_N^kf(\sigma+it)}_\delta-\log\abs{R_N^kf(\sigma+it)})\,\dd t\leq C_\delta\log^2(\norm{V_\tau R_N^kf}_K+1)$$
    for all $\tau\in\bR$, all $\sigma\in[\alpha,\beta]$ and all $N\geq2$. Set $D=\sup_{x\geq m}\log^2(x+1)/x^p<\infty$ and $D_\delta=C_\delta D$ for $\delta>0$, and use Fubini's theorem together with \cref{eq:unit-interval-integral-bound} to estimate
    \begin{align*}
    \frac{1}{2T}\int_{-T}^T(\log\abs{f(\sigma+it)}_\delta-\log&\abs{f(\sigma+it)})\,\dd t \\
        &\leq\frac{1}{2T}\int_{-T-1}^T\int_\tau^{\tau+1}(\log\abs{f(\sigma+it)}_\delta-\log\abs{f(\sigma+it)})\,\dd t\,\dd\tau \\
        &\leq D_\delta\frac{T+1}{T}\frac{1}{2(T+1)}\int_{-(T+1)}^{T+1}\norm{V_\tau f}_K^p\,\dd\tau
    \end{align*}
    for all $\sigma\in[\alpha,\beta]$. By estimating similarly for the Riesz means and applying \cref{lemma:sup-mean-values}, it then follows that
    $$\limsup_{T\to\infty}\sup_{\alpha<\sigma<\beta}\frac{1}{2T}\int_{-T}^T(\log\abs{f(\sigma+it)}_\delta-\log\abs{f(\sigma+it)})\,\dd t\leq D_\delta M_p^p(\norm{Vf}_K)$$
    and
    $$\limsup_{N\to\infty}\limsup_{T\to\infty}\sup_{\alpha<\sigma<\beta}\frac{1}{2T}\int_{-T}^T(\log\abs{R_N^kf(\sigma+it)}_\delta-\log\abs{R_N^kf(\sigma+it)})\,\dd t\leq D_\delta M_p^p(\norm{Vf}_K).$$
    Letting $\delta\downarrow0$, the result follows.
\end{proof}

Before proving \cref{theorem:existence-of-jessen-function}, we record the following useful consequence of \cref{lemma:delta-log-limits} which we shall need later.

\begin{lemma}\label{lemma:small-value-density}
    Let $1\leq p<\infty$ and let $f$ be a somewhere convergent Dirichlet series that is not identically zero with analytic continuation to $\bC_0$ satisfying
    $$\sup_{\sigma>0}\sup_{T\geq1}\frac{1}{2T}\int_{-T}^T\abs{f(\sigma+it)}^p\,\dd t<\infty.$$
    Then, for all $0<\alpha<\beta$,
    $$\lim_{\delta\downarrow0}\limsup_{T\to\infty}\sup_{\alpha<\sigma<\beta}\frac{\abs{\{t\in[-T,T]:\abs{f(\sigma+it)}\leq\delta\}}}{2T}=0$$
    and, for $k>3$,
    $$\lim_{\delta\downarrow0}\limsup_{N\to\infty}\limsup_{T\to\infty}\sup_{\alpha<\sigma<\beta}\frac{\abs{\{t\in[-T,T]:\abs{R_N^kf(\sigma+it)}\leq\delta\}}}{2T}=0.$$
\end{lemma}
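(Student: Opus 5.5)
The plan is to deduce this directly from \cref{lemma:delta-log-limits} through a pointwise inequality, in the spirit of Chebyshev's inequality. Fix $0<\alpha<\beta$ and, for $\delta\in(0,1)$, compare the parameters $\delta$ and $\delta^{1/2}$. Suppose $\abs{f(\sigma+it)}\leq\delta$. Then
$$\log\abs{f(\sigma+it)}_{\delta^{1/2}}-\log\abs{f(\sigma+it)}\geq\log\delta^{1/2}-\log\delta=\tfrac12\log\frac1\delta.$$
The integrand $\log\abs{f}_{\eta}-\log\abs{f}$ is non-negative for every $\eta>0$, since $\abs{z}_\eta\geq\abs{z}$. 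We may therefore restrict the integral to the small-value set and obtain
$$\frac{\abs{\{t\in[-T,T]:\abs{f(\sigma+it)}\leq\delta\}}}{2T}\leq\frac{2}{\log(1/\delta)}\cdot\frac{1}{2T}\int_{-T}^T\bigl(\log\abs{f(\sigma+it)}_{\delta^{1/2}}-\log\abs{f(\sigma+it)}\bigr)\,\dd t.$$
This holds for all $\sigma\in(\alpha,\beta)$ and all $T$.

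Next, take the supremum over $\sigma\in(\alpha,\beta)$ and then $\limsup_{T\to\infty}$. By \cref{lemma:delta-log-limits}, applied with parameter $\delta^{1/2}\downarrow0$, the resulting right-hand side tends to $0$. It is in any case bounded for small $\delta$, and the prefactor $2/\log(1/\delta)$ also tends to $0$, so either fact alone gives the claim. In fact the first-order bound from \cref{lemma:delta-log-limits} is not even needed. It suffices that the quantity is finite for a single fixed level, say $\eta=1$, with the estimate $\log\abs{f}_1-\log\abs{f}\geq\log(1/\delta)$ on the small-value set. The boundedness of this quantity follows from the proof of \cref{lemma:delta-log-limits}, where it is controlled by $D_1M_p^p(\norm{Vf}_K)<\infty$. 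Using the $\delta^{1/2}$ version keeps the argument self-contained given only the statement of the lemma.

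The Riesz means are handled identically. The same pointwise inequality applies to $R_N^kf$ in place of $f$, and one then takes the supremum over $\sigma$, $\limsup_{T}$, $\limsup_N$, and finally $\delta\downarrow0$, invoking the second assertion of \cref{lemma:delta-log-limits}. We may assume no $R_N^kf$ vanishes identically. This holds for all large $N$, because $R_N^kf\to f\not\equiv0$ and the Riesz means are eventually nonzero, so discarding finitely many $N$ does not affect the $\limsup_N$.

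I do not expect a real obstacle. The only point needing care is the order of the limits: the factor $1/\log(1/\delta)$ must be pulled out before taking $\limsup$ in $T$ and $N$, and this is legitimate since the factor is independent of $\sigma$, $T$ and $N$.
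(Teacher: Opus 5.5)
Your proof is correct and is essentially the paper's argument. The paper also bounds the indicator of the small-value set pointwise by a multiple of $\log\abs{z}_{\eta}-\log\abs{z}$ and then invokes \cref{lemma:delta-log-limits}, only with level $\eta=2\delta$ and constant $1/\log 2$, namely $\mathbf 1_{\overline{\delta\bD}}(z)\leq(\log\abs{z}_{2\delta}-\log\abs{z})/\log 2$, instead of your $\eta=\delta^{1/2}$ with the factor $2/\log(1/\delta)$, whose extra decay is harmless but not needed.
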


\begin{proof}
    This follows immediately from \cref{lemma:delta-log-limits} together with the inequality
    $$\mathbf 1_{\overline{\delta\bD}}(z)\leq\frac{\log\abs{z}_{2\delta}-\log\abs{z}}{\log 2}$$
    for all $z\in\bC$ and $\delta>0$.
\end{proof}

We can now prove \cref{theorem:existence-of-jessen-function}.

\begin{proof}[Proof of \cref{theorem:existence-of-jessen-function}]
    Fix $0<\alpha<\beta$ and $k>3$. Suppose without loss of generality that for all $N\geq2$, the function $R_N^kf$ is not identically zero. For $\delta>0$, use that $z\mapsto\log\abs{z}_\delta$ is $1/\delta$-Lipschitz together with Hölder's inequality to estimate
    \begin{multline*}
    \frac{1}{2T}\int_{-T}^T\abs{\log\abs{R_N^kf(\sigma+it)}_\delta-\log\abs{f(\sigma+it)}_\delta}\,\dd t \\
        \leq\frac{1}{\delta}\left(\frac{1}{2T}\int_{-T}^T\abs{R_N^kf(\sigma+it)-f(\sigma+it)}^p\,\dd t\right)^{1/p}.
    \end{multline*}
    Taking suprema over $\sigma>\alpha$ and $T\geq1$, and using \cref{lemma:riesz-means-convergence}, it then follows that
    \begin{equation}\label{eq:riesz-means-log-delta}
    \lim_{N\to\infty}\sup_{\sigma>\alpha}\sup_{T\geq1}\frac{1}{2T}\int_{-T}^T\abs{\log\abs{R_N^kf(\sigma+it)}_\delta-\log\abs{f(\sigma+it)}_\delta}\,\dd t=0.
    \end{equation}
    Estimating now
    \begin{align*}
    \Biggl\lvert\frac{1}{2T}\int_{-T}^T\log\abs{R_N^kf(\sigma+it)}\,&\dd t-\frac{1}{2T}\int_{-T}^T\log\abs{f(\sigma+it)}\,\dd t\Biggr\rvert \\
        &\leq\frac{1}{2T}\int_{-T}^T(\log\abs{R_N^kf(\sigma+it)}_\delta-\log\abs{R_N^kf(\sigma+it)})\,\dd t \\
        &\qquad+\frac{1}{2T}\int_{-T}^T\abs{\log\abs{R_N^kf(\sigma+it)}_\delta-\log\abs{f(\sigma+it)}_\delta}\,\dd t \\
        &\qquad+\frac{1}{2T}\int_{-T}^T(\log\abs{f(\sigma+it)}_\delta-\log\abs{f(\sigma+it)})\,\dd t
    \end{align*}
    and applying \cref{lemma:delta-log-limits} together with \cref{eq:riesz-means-log-delta}, it follows that
    $$\lim_{N\to\infty}\limsup_{T\to\infty}\sup_{\alpha<\sigma<\beta}\Abs{\frac{1}{2T}\int_{-T}^T\log\abs{R_N^kf(\sigma+it)}\,\dd t-\frac{1}{2T}\int_{-T}^T\log\abs{f(\sigma+it)}\,\dd t}=0.$$
    As the Jessen function $\sJ(R_N^kf,\sigma)$ exists uniformly for $\sigma\in(\alpha,\beta)$ by \cite{jessen_uber_1933}*{Satz A}, it follows that $\sJ(f,\sigma)$ exists uniformly for $\sigma\in(\alpha,\beta)$ and that
    $$\lim_{N\to\infty}\sJ(R_N^kf,\sigma)=\sJ(f,\sigma)$$
    uniformly for $\sigma\in(\alpha,\beta)$. That $\sigma\mapsto\sJ(f,\sigma)$ is decreasing and convex is an immediate consequence of this being true of $\sigma\mapsto\sJ(R_N^kf,\sigma)$ for all $N\geq2$. Finally, if $f(+\infty)\neq0$, then we can use almost periodicity together with \cref{lemma:riesz-means-convergence} to find a $\kappa>0$ such that $\abs{f(s)}\geq\abs{f(+\infty)}/2$ for all $s\in\bC_\kappa$, and an $N_0$ such that $\abs{R_N^kf(s)-f(s)}\leq\abs{f(+\infty)}/4$ for all $s\in\bC_\kappa$ and all $N\geq N_0$. Then, if $\delta=\abs{f(+\infty)}/4$, it holds that $\abs{f(s)}_\delta=\abs{f(s)}$ and $\abs{R_N^kf(s)}_\delta=\abs{R_N^kf(s)}$ for all $s\in\bC_\kappa$ and $N\geq N_0$, and so the claim follows from combining the uniform convergence we have shown on intervals with \cref{eq:riesz-means-log-delta}.
\end{proof}

\section{The Hardy--Stein identity and Littlewood--Paley formula}\label{section:hardy-stein-littlewood-paley}

Now that we have obtained some control on the zeros of Dirichlet series, our next goal is to prove a Hardy--Stein identity for the derivative of $\sigma\mapsto M_p^p(f,\sigma)$ for Dirichlet series $f$ satisfying
$$\sup_{\sigma>0}\sup_{T\geq1}\frac{1}{2T}\int_{-T}^T\abs{f(\sigma+it)}^p\,\dd t<\infty.$$
In terms of vertical limits, we will prove the following.

\begin{theorem}[Hardy--Stein identity for $\sH^p$]\label{theorem:hardy-stein-hp}
    Let $f\in\sH^p$ with $1\leq p<\infty$. Then there exists a set $E\subseteq\bT^\infty$ of full measure such that, for all $\chi\in E$, the map $\sigma\mapsto M_p^p(f_\chi,\sigma)$ is continuously differentiable on $(0,\infty)$, and
    $$\partial_\sigma M_p^p(f_\chi,\sigma)=-\lim_{T\to\infty}\frac{p^2}{2T}\int_{-T}^T\int_\sigma^\infty\lvert f_\chi(\nu+it)\rvert^{p-2}\lvert f_\chi'(\nu+it)\rvert^2\,\dd\nu\,\dd t,$$
    where the limit converges uniformly in $\sigma$ on $(\kappa,\infty)$ for all $\kappa>0$. Furthermore, if $p\neq1$, then we may take $E=\sC_p(f)$.
\end{theorem}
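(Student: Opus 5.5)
The approach is to prove a deterministic statement for a single Dirichlet series $g$, namely $g=f_\chi$ with $\chi\in\sC_p(f)$ (and, when $p=1$, also in a further full-measure set), and to obtain it from Riesz means. Fix $k>3$ and write $g_N=R_N^kg$. Each $g_N$ is a Dirichlet polynomial, so $\abs{g_N}^p$ is almost periodic and $M_p^p(g_N,\sigma)$ exists. For a Dirichlet polynomial the classical Hardy--Stein identity holds. We have $\Delta\abs{g_N}^p=p^2\abs{g_N}^{p-2}\abs{g_N'}^2$ away from zeros. Apply Green's theorem on rectangles $[\sigma,\Sigma]\times[-T,T]$, divide by $2T$, and let $T\to\infty$ and then $\Sigma\to\infty$. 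The horizontal boundary terms vanish by almost periodicity and the bound on $g_N'$. The term at $\Sigma$ tends to $0$ because $\partial_\sigma\abs{g_N}^p\to0$ uniformly as $\Re s\to\infty$. This gives
$$\partial_\sigma M_p^p(g_N,\sigma)=-\lim_{T\to\infty}\frac{p^2}{2T}\int_{-T}^T\int_\sigma^\infty\abs{g_N}^{p-2}\abs{g_N'}^2\,\dd\nu\,\dd t.$$
For $p<2$ the zeros are handled by regularizing with $(\abs{g_N}^2+\varepsilon)^{p/2}$ and letting $\varepsilon\downarrow0$ by monotone convergence. Both sides then equal $M(\partial_\sigma\abs{g_N}^p,\sigma)$, the mean of $p\abs{g_N}^{p-2}\Re(\overline{g_N}g_N')$.

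Next I pass to the limit $N\to\infty$. I will work with the formulation
$$\partial_\sigma M_p^p(g,\sigma)=\lim_{T\to\infty}\frac{1}{2T}\int_{-T}^T p\abs{g}^{p-2}\Re(\overline{g}g')(\sigma+it)\,\dd t,$$
and identify the right-hand side with minus the double integral through Green's theorem applied to $g$ itself on finite rectangles. The limits in $T$ and $\Sigma$ are then controlled uniformly using the $N$-uniform convergence. The key estimate comes from \cref{lemma:riesz-means-convergence}(c) combined with \cref{lemma:carlson-derivative-inequality}, applied to $g_N-g$ with $\kappa/2$ in place of $\kappa$: the quantities $\sup_{\sigma>\kappa}\sup_{T\ge1}\frac{1}{2T}\int_{-T}^T\abs{g_N-g}^p\,\dd t$ and the corresponding quantity for $g_N'-g'$ both tend to $0$. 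For $p\ge2$, and also for $1<p<2$, the pointwise inequality
$$\bigl\lvert\abs{a}^{p-2}\overline{a}b-\abs{c}^{p-2}\overline{c}d\bigr\rvert\lesssim(\abs{a}+\abs{c})^{p-2}\abs{a-c}\abs{b}+\abs{c}^{p-1}\abs{b-d}$$
together with Hölder's inequality shows that the vertical means of $\partial_\sigma\abs{g_N}^p$ converge to those of $\partial_\sigma\abs{g}^p$, uniformly in $\sigma>\kappa$ and $T\ge1$. For $1<p<2$ the factor $\abs{a-c}^{p-1}$ replaces $\abs{a-c}$, which still works. Since $M_p^p(g_N,\sigma)\to M_p^p(g,\sigma)$ uniformly on $(\kappa,\infty)$, the identity for $g_N$ transfers to $g$: the means of $\partial_\sigma\abs{g}^p$ exist uniformly, and the derivative identity holds with a continuous right-hand side. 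Consequently $M_p^p(g,\cdot)$ is $C^1$. This step needs only the Carlson condition, so it yields $E=\sC_p(f)$ when $p\ne1$.

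The tail identification, i.e.\ showing that the mean of $\partial_\sigma\abs{g}^p$ equals minus the mean of $\int_\sigma^\infty p^2\abs{g}^{p-2}\abs{g'}^2$ with uniform convergence, is done by monotone convergence. The integrands $\int_\sigma^\Sigma$ are nonnegative and increase in $\Sigma$. Using Green on $[\sigma,\Sigma]\times[-T,T]$ for $g$, the $\Sigma$-boundary term is $o(1)$ uniformly in $T$, because $g$ is almost periodic on $\bC_1$ with bounded derivative decaying like $g'(s)\to0$ as $\Re s\to\infty$. The horizontal terms are $O(1/T)$ on average, by the order bound \cref{lemma:order-strict-inequality} applied to $g$ and to $g'$; if needed I will average over $T$ in a unit interval.

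The main obstacle is $p=1$. There $\abs{g}^{-1}\abs{g'}^2$ is singular at zeros, and $\partial_\sigma\abs{g}=\Re(\overline{g}g')/\abs{g}$ is not continuous in $g$. The pointwise comparison fails where $g$ or $g_N$ is small. My plan is to split $[-T,T]$ according to whether $\abs{g(\sigma+it)}\le\delta$. On the large set the Lipschitz estimate works with constant $1/\delta$. On the small set I bound $\abs{g'}$ in $L^1$-means times the relative measure. That measure is uniformly small, by \cref{lemma:small-value-density} applied to both $g$ and $g_N$, uniformly in $\sigma\in(\alpha,\beta)$ and in $N$. This needs Hölder with some $L^{q}$, $q>1$, control of $g'$, which I obtain by restricting to $\chi$ for which $f_\chi$ lies in a slightly better class. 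For this I will take $E=\sC_1(f)\cap\{\chi:\text{the }\sH^1\text{-type }L\log L\text{ or }\cE\text{-condition holds}\}$, produced by the ergodic theorem. The exceptional set is null, which explains why the theorem asserts only a full-measure $E$ when $p=1$. Uniformity on $(\kappa,\infty)$ rather than on $(\alpha,\beta)$ then follows since $g\to g(+\infty)$ uniformly for large $\Re s$. If $g(+\infty)=0$, I factor out the leading term $a_{n_0}n_0^{-s}$, which does not change $\abs{g'}/\abs{g}$ behaviour at infinity.
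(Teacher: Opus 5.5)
\textbf{The case $1<p<\infty$.} Here your argument is essentially the paper's route, and it does give $E=\sC_p(f)$. The steps match:
\begin{enumerate}[(i)]
\item Green's theorem on $[\sigma,\Sigma]\times[-T,T]$, with the horizontal sides controlled by the order bound.
\item Convergence of the vertical means of $\partial_\sigma\abs{R_N^kf_\chi}^p$, uniformly in $\sigma>\kappa$ and $T\geq1$, via \cref{lemma:abs-inequalities}, Hölder, \cref{lemma:riesz-means-convergence} and \cref{lemma:carlson-derivative-inequality}.
\item A Moore--Osgood argument and interchange of limit and derivative.
\end{enumerate}
One harmless slip: the horizontal terms are $o(1)$ rather than $O(1/T)$, since $\abs{f_\chi}^{p-1}\abs{f_\chi'}=O(\abs{t}^{p/r})$ with $r>p$.

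\textbf{The gap: the case $p=1$.} You correctly see what the splitting argument needs, on the small-value set and also, through the product $\abs{R_N^kf_\chi'}\,\abs{R_N^kf_\chi-f_\chi}/\delta$, on its complement. It needs, for every $\kappa>0$ and some $q>1$, a bound $\sup_{\sigma>\kappa}\sup_{T\geq1}\frac{1}{2T}\int_{-T}^T\abs{f_\chi'(\sigma+it)}^q\,\dd t<\infty$, together with the same bound for $R_N^kf_\chi'$ uniformly in $N$. The set you propose does not provide this:
\begin{itemize}
\item Membership in $\cE_1(f)$ or $\sC_1(f)$ only carries $L^1$ information.
\item An $L\log L$ condition obtained from the ergodic theorem would need $f^*\in L\log L(\bT^\infty)$. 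This fails for general $f\in\sH^1$, already for $f(s)=F(2^{-s})$ with $F\in H^1(\bD)$ whose boundary values are not in $L\log L$.
\item The improvement cannot hold on all of $\bC_0$ either. By \cref{theorem:characterization-of-hp-carlson-set}, if $f\notin\sH^q$ then no $f_\chi$ satisfies the $q$-Carlson condition there.
\end{itemize}

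\textbf{The missing idea} is that horizontal translation improves integrability. Helson's inequality $\sum_n\abs{b_n}^2/d(n)\leq\norm{g}_{\sH^1}^2$, combined with $d(n)=O(n^{\varepsilon})$, gives $H_\kappa f(s)=f(s+\kappa)\in\sH^2$ for every $\kappa>0$. Hence $E=\bigcap_{n}\sC_2(H_{1/n}f)$ has full measure by \cref{theorem:characterization-of-hp-carlson-set}.

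For $\chi\in E$, the function $f_\chi$ satisfies the $L^2$ Carlson condition on each $\bC_\kappa$. Then \cref{lemma:riesz-means-convergence,lemma:carlson-derivative-inequality} give uniform $L^2$ bounds for $f_\chi'$ and $R_N^kf_\chi'$, and $L^2$ convergence $R_N^kf_\chi\to f_\chi$ on $\bC_\kappa$. Together with \cref{lemma:small-value-density} and Cauchy--Schwarz, this is exactly what your splitting requires. Without such an integrability-improvement statement, your $p=1$ step is unjustified.
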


The proof consists of two main parts, which use quite different ideas. In the first part we apply Green's theorem to show that if the derivative of $\sigma\mapsto M_p^p(f,\sigma)$ exists and we can calculate it as
$$\partial_\sigma M_p^p(f,\sigma)=\lim_{T\to\infty}\frac{1}{2T}\int_{-T}^T\partial_\sigma\abs{f(\sigma+it)}^p\,\,\dd t,$$
then we have a Hardy--Stein identity for it, and in the second part we approximate our function by its Riesz means to establish that this is indeed the case. Note that the above is always true when the Dirichlet series of $f$ converges uniformly on a half-plane $\bC_\kappa$ with $\kappa<\sigma$ due to almost periodicity, and so our use of Riesz means is precisely to deal with the lack of almost periodicity.

We start with the first part of our proof, which will rely on the following consequence of Green's theorem. The argument is standard, and so we leave it to the reader.

\begin{lemma}\label{lemma:greens-consequence}
    Let $1\leq p<\infty$, let $K\subseteq\bC$ be a compact set such that $\partial K$ is a piecewise continuously differentiable curve, and let $f$ be an analytic function in a neighborhood of $K$. Then
    $$-\iint_K\Delta\abs{f(\sigma+it)}^p\,\dd\sigma\,\dd t=\int_{\partial K}(\partial_t\abs{f(\sigma+it)}^p\,\dd\sigma-\partial_\sigma\abs{f(\sigma+it)}^p\,\dd t).$$
\end{lemma}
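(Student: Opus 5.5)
The identity is the divergence theorem applied to $u=\abs{f}^p$. The only issue is that $u$ need not be $C^2$ at zeros of $f$ when $p<2$; this matters especially for $p=1$. I would therefore regularize, apply Green's theorem to a smooth approximant, and pass to the limit. Throughout, $\Delta\abs{f}^p$ denotes the function $p^2\abs{f}^{p-2}\abs{f'}^2$, defined off $\sZ(f)$. This is the classical Laplacian wherever $f\neq0$, and it is the quantity that appears in the Hardy--Stein identity.

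For smooth $u$ on a neighborhood of $K$, Green's theorem $\oint_{\partial K}(P\,\dd\sigma+Q\,\dd t)=\iint_K(\partial_\sigma Q-\partial_t P)\,\dd\sigma\,\dd t$ is applied with $P=\partial_t u$ and $Q=-\partial_\sigma u$. Then $\partial_\sigma Q-\partial_t P=-\Delta u$, which gives exactly the claimed formula for $u$. I would apply this with
$$u_\varepsilon=(\abs{f}^2+\varepsilon)^{p/2},\qquad \varepsilon>0,$$
which is $C^\infty$ near $K$. With $x=\abs{f}^2$, one has $\Delta x=4\abs{f'}^2$ and $\abs{\nabla x}^2=4x\abs{f'}^2$. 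The chain rule then gives
$$\Delta u_\varepsilon=2p\abs{f'}^2(x+\varepsilon)^{p/2-2}\Bigl(\tfrac p2x+\varepsilon\Bigr)\ge0,\qquad \abs{\nabla u_\varepsilon}\le p(x+\varepsilon)^{(p-1)/2}\abs{f'}.$$

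Next I let $\varepsilon\downarrow0$ on both sides. We may assume $f\not\equiv0$ on each component, since otherwise both sides vanish. Then $\sZ(f)\cap K$ is finite.

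For the boundary term: off the finitely many zeros on $\partial K$, $\nabla u_\varepsilon\to\nabla\abs{f}^p$. Moreover, since $p\ge1$, $\abs{\nabla u_\varepsilon}\le p(\norm{f}_K^2+1)^{(p-1)/2}\norm{f'}_K$ uniformly for $\varepsilon\le1$. Dominated convergence along the piecewise $C^1$ curve $\partial K$ therefore gives convergence of the line integral. (Where $f\neq0$, $\partial_\sigma\abs{f}^p$ and $\partial_t\abs{f}^p$ are the classical derivatives; at zeros they are irrelevant since there are finitely many.)

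For the area term: off $\sZ(f)$, $\Delta u_\varepsilon\to p^2\abs{f}^{p-2}\abs{f'}^2$. For domination, note that $(\frac p2x+\varepsilon)/(x+\varepsilon)\le\max\{1,p/2\}$. If $p<2$, then $(x+\varepsilon)^{p/2-1}\le x^{p/2-1}$. If $p\ge2$, then $(x+\varepsilon)^{p/2-1}\le(x+1)^{p/2-1}$. Hence $\Delta u_\varepsilon\le C\abs{f'}^2\abs{f}^{p-2}$ in the first case and $\Delta u_\varepsilon\le C\abs{f'}^2$ in the second.

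The main point, and the step I expect to need the most care, is local integrability of $\abs{f}^{p-2}\abs{f'}^2$ near a zero $s_0$ of order $m$. There $\abs{f}^{p-2}\abs{f'}^2\asymp\abs{s-s_0}^{m(p-2)+2(m-1)}=\abs{s-s_0}^{mp-2}$. Since $mp-2>-2$, this is integrable in the plane. Dominated convergence then yields the identity. As a remark, if one only had Fatou's lemma, the nonnegativity of $\Delta u_\varepsilon$ together with the convergent boundary side would already give integrability. Nevertheless, the explicit dominating function is the cleanest route.
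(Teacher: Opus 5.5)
Your proof is correct. The paper gives no proof of this lemma; it calls the argument standard and leaves it to the reader, so your argument supplies what is omitted rather than following or departing from a written proof. The key computations all check out:
\begin{itemize}
\item the formula $\Delta u_\varepsilon=2p\abs{f'}^2(x+\varepsilon)^{p/2-2}(\tfrac p2x+\varepsilon)$;
\item the uniform bound $\abs{\nabla u_\varepsilon}\le p(x+\varepsilon)^{(p-1)/2}\abs{f'}$, where you use $p\ge1$;
\item the domination of $\Delta u_\varepsilon$ in the two cases $p<2$ and $p\ge2$;
\item the local integrability $\abs{f}^{p-2}\abs{f'}^2\asymp\abs{s-s_0}^{mp-2}$.
\end{itemize}
Your reading of $\Delta\abs{f}^p$ as $p^2\abs{f}^{p-2}\abs{f'}^2$ off $\sZ(f)$ is the one the paper uses later in the proof of its Littlewood--Paley formula.

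The other natural ``standard'' route excises small disks $B_r(s_0)$ around the finitely many zeros of $f$ in $K$ and applies Green's theorem where $\abs{f}^p$ is smooth. The flux through each small circle is $O(r\cdot r^{mp-1})=O(r^{mp})\to0$, and the area integral converges monotonically since the integrand is nonnegative. That version avoids computing $\Delta u_\varepsilon$. However, it becomes fiddly when zeros lie on $\partial K$, since one must cut partial disks out of a piecewise $C^1$ boundary. This can genuinely happen in the paper's application to rectangles $[\sigma,\sigma']\times[-T,T]$. Your smoothing handles interior and boundary zeros uniformly, at the cost of the explicit Laplacian computation and a domination argument on both sides of the identity.
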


In view of this lemma, we shall at several points make use of the identities
\begin{equation}\label{eq:partial-sigma-identity}
    \partial_\sigma\abs{f(\sigma+it)}^p=p\abs{f(\sigma+it)}^{p-2}\Re(f'(\sigma+it)\overline{f(\sigma+it)}),
\end{equation}
\begin{equation}\label{eq:partial-t-identity}
    \partial_t\abs{f(\sigma+it)}^p=-p\abs{f(\sigma+it)}^{p-2}\Im(f'(\sigma+it)\overline{f(\sigma+it)})
\end{equation}
for analytic $f$, $1\leq p<\infty$, and $\sigma+it\in\bC$ with $f(\sigma+it)\neq0$, and so we record them here. They can be shown by direct computation using the Cauchy--Riemann equations.

In our application of Green's theorem we will need to estimate the partial derivative $\partial_t\abs{f(\sigma+it)}^p$ for large values of $\abs{t}$, and so we prove the following lemma.

\begin{lemma}\label{lemma:partial-t-integral-limit}
    Let $1\leq p<\infty$ and let $f$ be a somewhere convergent Dirichlet series with analytic continuation to $\bC_0$ satisfying
    $$\sup_{\sigma>0}\sup_{T\geq1}\frac{1}{2T}\int_{-T}^T\abs{f(\sigma+it)}^p\,\dd t<\infty.$$
    If $\kappa>0$, then $\sigma\mapsto\partial_t\abs{f(\sigma+it)}^p$ is in $L^1([\kappa,\infty))$ for all $t\in\bR$, and
    $$\lim_{T\to\infty}\frac{1}{2T}\int_\kappa^\infty\abs{\partial_t\abs{f(\sigma\pm iT)}^p}\,\dd\sigma=0.$$
\end{lemma}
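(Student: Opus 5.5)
By \cref{eq:partial-t-identity} we have $\abs{\partial_t\abs{f(\sigma+it)}^p}\leq p\abs{f(\sigma+it)}^{p-1}\abs{f'(\sigma+it)}$ wherever $f\neq0$. The zeros of $f$ on a horizontal line are isolated, so they are irrelevant for the $\dd\sigma$-integral; when $p\ge1$ the right-hand side still bounds the derivative almost everywhere. The plan is to split $[\kappa,\infty)$ into a bounded part $[\kappa,\sigma_1]$ and a tail $[\sigma_1,\infty)$, and to control each by the order estimates of \cref{lemma:order-strict-inequality} together with absolute convergence of the Dirichlet series far to the right.

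\emph{Tail.} Choose $\sigma_1$ so large that $\sum_n\abs{a_n}n^{-\sigma_1/2}<\infty$. For $\sigma\geq\sigma_1$ we then have $\abs{f(\sigma+it)}\leq\sum\abs{a_n}n^{-\sigma_1}=:A$ uniformly in $t$. Moreover $f'(s)=-\sum_{n\geq2}a_n\log n\,n^{-s}$, so
$$\abs{f'(\sigma+it)}\leq 2^{-(\sigma-\sigma_1)/2}\sum_n\abs{a_n}\log n\,n^{-\sigma_1/2}\leq B\,2^{-\sigma/2}.$$
This gives an integrable majorant $pA^{p-1}B2^{-\sigma/2}$ that is independent of $t$. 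Hence the tail integral is bounded uniformly in $t$, and after dividing by $2T$ it tends to $0$.

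\emph{Bounded part.} By \cref{lemma:order-strict-inequality} applied at level $\kappa/2$, we get $\abs{f(\sigma+it)}=O(\abs{t}^{\mu})$ uniformly for $\sigma\geq\kappa/2$, with $\mu<1/p$. By Cauchy's estimate on a disc of radius $\kappa/2$, also $\abs{f'(\sigma+it)}=O(\abs t^{\mu})$ uniformly for $\sigma\geq\kappa$. Continuity gives local boundedness on compact sets, so $\sigma\mapsto\partial_t\abs{f(\sigma+it)}^p$ lies in $L^1([\kappa,\sigma_1])$ for each $t$; combined with the tail this proves the integrability claim. For the limit,
$$\frac1{2T}\int_\kappa^{\sigma_1}\abs{\partial_t\abs{f(\sigma\pm iT)}^p}\,\dd\sigma\lesssim \frac{(\sigma_1-\kappa)\,T^{(p-1)\mu+\mu}}{T}=O(T^{p\mu-1}),$$
and this tends to $0$ precisely because $p\mu<1$.

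The main obstacle is making sure the order bound is genuinely strict ($\mu<1/p$, not merely $\leq1/p$) and uniform in $\sigma$ near $\kappa$. This is exactly why \cref{lemma:order-strict-inequality} is applied at a level strictly below $\kappa$, namely $\kappa/2$, before transferring the bound to $f'$ via Cauchy's estimate. If one only had $\mu\le1/p$, I would instead try the Carlson-type bound from \cref{lemma:carlson-derivative-inequality} integrated in $T$, but the strict inequality makes the pointwise argument straightforward.
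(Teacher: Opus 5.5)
Your proof is correct and follows the paper's argument: split $[\kappa,\infty)$ at a point of absolute convergence, control the tail by a $t$-independent exponentially decaying majorant for $f'$, and control the finite strip by the strict order bound $\mu<1/p$ from \cref{lemma:order-strict-inequality}, which gives $O(T^{p\mu-1})\to0$. The only differences are minor. You pass the order bound from $f$ to $f'$ by Cauchy's estimate, whereas the paper obtains it by applying \cref{lemma:order-strict-inequality} to $f'$ via \cref{lemma:carlson-derivative-inequality}. In the tail you should take $\sigma_1/2$ strictly beyond the abscissa of absolute convergence, so that $\sum_n\abs{a_n}\log n\,n^{-\sigma_1/2}$ actually converges.
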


\begin{proof}
    We consider only the case when the $\pm$ is a $+$. By assumption we can write
    $$f(s)=\sum_{n=1}^\infty a_nn^{-s},\quad f'(s)=-\sum_{n=2}^\infty a_n\log(n)n^{-s}$$
    in some half-plane, and so in particular we can find some $\kappa_0>0$ such that both of the above Dirichlet series converge absolutely in $\overline\bC_{\kappa_0}$ (see, e.g., \cite{titchmarsh_theory_1958}*{Section 9.13}). It suffices to show the result when $\kappa\in(0,\kappa_0)$, so fix such a $\kappa$. For $\sigma+it\in\bC_{\kappa_0}$, estimate
    $$\abs{f'(\sigma+it)}\leq\sum_{n=2}^\infty\abs{a_n}\log (n)n^{-\kappa_0}{n^{-(\sigma-\kappa_0)}}\leq\frac{1}{2^{\sigma-\kappa_0}}\sum_{n=2}^\infty\abs{a_n}\log(n)n^{-\kappa_0}=:\frac{C}{2^\sigma}.$$
    Using this together with the identity \cref{eq:partial-t-identity} and the boundedness of $f$ in $\bC_{\kappa_0}$, it follows that
    \begin{equation}\label{eq:derivative-integral-large-sigma}
        \frac{1}{2T}\int_{\kappa_0}^\infty\abs{\partial_t\abs{f(\sigma+iT)}^p}\,\dd\sigma\leq \frac{1}{2T}Cp\norm{f}_{\bC_{\kappa_0}}^{p-1}\int_{\kappa_0}^\infty\frac{\,\dd\sigma}{2^{\sigma}}\to0
    \end{equation}
    as $T\to\infty$. Use next \cref{lemma:order-strict-inequality,lemma:carlson-derivative-inequality} to find an $r>p$ and a $D>0$ such that
    $$\max\{\abs{f(\sigma+it)},\abs{f'(\sigma+it)}\}\leq D(1+\abs{t})^{1/r}$$
    for all $\sigma+it\in\bC_\kappa$. Using this and the identity \cref{eq:partial-t-identity} again, we have that
    \begin{equation}\label{eq:derivative-integral-small-sigma}
        \frac{1}{2T}\int_{\kappa}^{\kappa_0}\abs{\partial_t\abs{f(\sigma+iT)}^p}\,\dd\sigma\leq pD^p(\kappa_0-\kappa)\frac{(1+T)^{p/r}}{2T}\to0
    \end{equation}
    as $T\to\infty$. The result follows by combining \cref{eq:derivative-integral-large-sigma,eq:derivative-integral-small-sigma}.
\end{proof}

With this estimate, we can now apply Green's theorem to tackle the first part of the proof of the Hardy--Stein identity.

\begin{lemma}\label{lemma:derivative-mean-laplacian-limit}
    Let $1\leq p<\infty$ and let $f$ be a somewhere convergent Dirichlet series with analytic continuation to $\bC_0$ satisfying
    $$\sup_{\sigma>0}\sup_{T\geq1}\frac{1}{2T}\int_{-T}^T\abs{f(\sigma+it)}^p\,\dd t<\infty.$$
    Then
    $$\lim_{T\to\infty}\sup_{\sigma>\kappa}\Abs{-\frac{1}{2T}\int_{-T}^T\int_\sigma^\infty\Delta\abs{f(\nu+it)}^p\,\dd\nu\,\dd t-\frac{1}{2T}\int_{-T}^T\partial_\sigma\abs{f(\sigma+it)}^p\,\dd t}=0$$
    for all $\kappa>0$.
\end{lemma}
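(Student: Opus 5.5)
Fix $\kappa>0$ and $\sigma>\kappa$. The plan is to apply \cref{lemma:greens-consequence} to the rectangle $K=[\sigma,L]\times[-T,T]$ and then let $L\to\infty$. Orienting $\partial K$ positively, the right-hand side of \cref{lemma:greens-consequence} splits into four pieces. The two vertical sides at $\nu=\sigma$ and $\nu=L$ give
$$\int_{-T}^T\partial_\sigma\abs{f(\sigma+it)}^p\,\dd t-\int_{-T}^T\partial_\sigma\abs{f(L+it)}^p\,\dd t.$$
The two horizontal sides at $t=\pm T$ give
$$\pm\int_\sigma^L\partial_t\abs{f(\nu\pm iT)}^p\,\dd\nu,$$
with the appropriate signs. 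The left-hand side is $-\int_{-T}^T\int_\sigma^L\Delta\abs{f(\nu+it)}^p\,\dd\nu\,\dd t$. When $p<2$ one should apply Green's theorem on the complement of small disks around the zeros of $f$, or approximate $\abs{f}^p$ by $(\abs{f}^2+\varepsilon)^{p/2}$. This is the standard caveat the paper already waves through by leaving \cref{lemma:greens-consequence} to the reader, and we will use it as stated, noting that $\Delta\abs{f}^p=p^2\abs{f}^{p-2}\abs{f'}^2\geq0$ is locally integrable.

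Next we let $L\to\infty$ for fixed $T$. In the half-plane $\bC_{\kappa_0}$ from the proof of \cref{lemma:partial-t-integral-limit}, $f$ is bounded and $\abs{f'(\nu+it)}\leq C2^{-\nu}$. By \cref{eq:partial-sigma-identity}, $\partial_\sigma\abs{f(L+it)}^p=O(2^{-L})$ uniformly in $t$, so the right vertical side vanishes in the limit. The horizontal sides converge to integrals over $[\sigma,\infty)$, which are absolutely convergent by \cref{lemma:partial-t-integral-limit}. Since $\Delta\abs{f}^p\geq0$, the left-hand side converges by monotone convergence, and the limit is finite because everything else is. This yields, for each $T$ and $\sigma>\kappa$,
$$-\frac{1}{2T}\int_{-T}^T\int_\sigma^\infty\Delta\abs{f}^p\,\dd\nu\,\dd t-\frac{1}{2T}\int_{-T}^T\partial_\sigma\abs{f(\sigma+it)}^p\,\dd t=\frac{1}{2T}\Bigl(\int_\sigma^\infty\partial_t\abs{f(\nu+iT)}^p\,\dd\nu-\int_\sigma^\infty\partial_t\abs{f(\nu-iT)}^p\,\dd\nu\Bigr),$$
with signs to be checked against the orientation.

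Finally, the right-hand side is bounded in absolute value by
$$\frac{1}{2T}\int_\kappa^\infty\bigl(\abs{\partial_t\abs{f(\nu+iT)}^p}+\abs{\partial_t\abs{f(\nu-iT)}^p}\bigr)\,\dd\nu,$$
which is independent of $\sigma>\kappa$. By \cref{lemma:partial-t-integral-limit} it tends to $0$ as $T\to\infty$, giving the claimed uniform limit. The main obstacle is the bookkeeping in the Green step at zeros of $f$ when $p<2$, where $\abs{f}^p$ is not $C^2$. To handle it, I would first try the regularization $(\abs{f}^2+\varepsilon)^{p/2}$. Its boundary terms converge dominatedly to those of $\abs{f}^p$, since on each segment the zeros are finite in number and $\abs{f}^{p-1}\abs{f'}$ is bounded there. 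Its Laplacian increases to $p^2\abs{f}^{p-2}\abs{f'}^2$ off the zero set, so monotone convergence handles the area term.
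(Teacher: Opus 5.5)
Your proposal is correct and follows the paper's proof essentially step for step. The shared route is Green's theorem on $[\sigma,L]\times[-T,T]$, monotone convergence (via subharmonicity) for the area term as $L\to\infty$, and decay of the right vertical side from $f$ bounded and $f'\to0$ far to the right. The horizontal sides are then controlled by \cref{lemma:partial-t-integral-limit}, using the $\sigma$-independent bound over $[\kappa,\infty)$ to get uniformity.

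One small correction concerns your optional regularization aside. For $1\le p<2$, the Laplacian of $(\abs{f}^2+\varepsilon)^{p/2}$ is $p\abs{f'}^2(\abs{f}^2+\varepsilon)^{p/2-2}(p\abs{f}^2+2\varepsilon)$. It does not increase to $p^2\abs{f}^{p-2}\abs{f'}^2$; off the zero set it approaches this limit from above. It is, however, bounded by the locally integrable function $2p\abs{f}^{p-2}\abs{f'}^2$, so dominated convergence, rather than monotone convergence, handles the area term.
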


\begin{proof}
    Fix $\kappa>0$. Take $\kappa<\sigma<\sigma'$ and $T>0$. Applying \cref{lemma:greens-consequence} to the rectangle $[\sigma,\sigma']\times[-T,T]$ we have that
    \begin{multline}\label{eq:greens-application}
        -\int_{-T}^T\int_\sigma^{\sigma'}\Delta\abs{f(\nu+it)}^p\,\dd\nu\,\dd t=\int_\sigma^{\sigma'}\partial_t\abs{f(\nu-iT)}^p\,\dd\nu-\int_\sigma^{\sigma'}\partial_t\abs{f(\nu+iT)}^p\,\dd\nu \\
        +\int_{-T}^T\partial_\sigma\abs{f(\sigma+it)}^p\,\dd t-\int_{-T}^T\partial_\sigma\abs{f(\sigma'+it)}^p\,\dd t.
    \end{multline}
    As $\abs{f}^p$ is subharmonic, the monotone convergence theorem implies that
    $$\lim_{\sigma'\to\infty}\int_{-T}^T\int_\sigma^{\sigma'}\Delta\abs{f(\nu+it)}^p\,\dd\nu\,\dd t=\int_{-T}^T\int_\sigma^\infty\Delta\abs{f(\nu+it)}^p\,\dd\nu\,\dd t.$$
    Next, estimate
    $$\Abs{\int_{-T}^T\partial_\sigma\abs{f(\sigma'+it)}^p\,\dd t}\leq p2T\norm{f}_{\bC_{\sigma'}}^{p-1}\norm{f'}_{\bC_{\sigma'}}\to0$$
    as $\sigma'\to\infty$ since $f(s)\to f(+\infty)$ and $f'(s)\to f'(+\infty)=0$ uniformly for $s\in\bC_{\sigma'}$ as $\sigma'\to\infty$. The result now follows by letting $\sigma'\to\infty$, dividing by $2T$, letting $T\to\infty$, and using \cref{lemma:partial-t-integral-limit} in \cref{eq:greens-application}.
\end{proof}

The above lemma is sufficiently strong for us to prove a Littlewood--Paley formula.

\begin{theorem}\label{theorem:littlewood-paley-dirichlet-series}
    Let $1\leq p<\infty$ and let $f$ be a somewhere convergent Dirichlet series with analytic continuation to $\bC_0$ satisfying
    $$\sup_{\sigma>0}\sup_{T\geq1}\frac{1}{2T}\int_{-T}^T\abs{f(\sigma+it)}^p\,\dd t<\infty.$$
    Then
    $$\norm{f}_{\sH^p}^p=\abs{f(+\infty)}^p+\lim_{\sigma_0\downarrow0}\lim_{T\to\infty}\frac{p^2}{2T}\int_{-T}^T\int_{\sigma_0}^\infty\abs{f(\sigma+it)}^{p-2}\abs{f'(\sigma+it)}^2(\sigma-\sigma_0)\,\dd\sigma\,\dd t.$$
\end{theorem}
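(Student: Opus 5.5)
We integrate the identity of \cref{lemma:derivative-mean-laplacian-limit} in $\sigma$ over $[\sigma_0,\infty)$ before any derivative identity is needed. We use $\Delta\abs{f}^p=p^2\abs{f}^{p-2}\abs{f'}^2$ (away from zeros, and in the distributional sense otherwise; zeros are isolated so this causes no trouble). Since $\int_{\sigma_0}^\infty\int_\sigma^\infty g(\nu)\,\dd\nu\,\dd\sigma=\int_{\sigma_0}^\infty g(\nu)(\nu-\sigma_0)\,\dd\nu$, Fubini applies because everything is non-negative. Thus the quantity inside the double limit of the statement equals
$$\frac{1}{2T}\int_{-T}^T\int_{\sigma_0}^{\infty}\Bigl(\int_\sigma^\infty\Delta\abs{f(\nu+it)}^p\,\dd\nu\Bigr)\dd\sigma\,\dd t.$$

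First, for fixed $\sigma_0>0$ and $T$, integrate in $\sigma$ over $[\sigma_0,\sigma_1]$ and use the fundamental theorem of calculus on the $\partial_\sigma$ term. This gives
$$\frac{1}{2T}\int_{-T}^T\int_{\sigma_0}^{\sigma_1}\partial_\sigma\abs{f(\sigma+it)}^p\,\dd\sigma\,\dd t=\frac{1}{2T}\int_{-T}^T\bigl(\abs{f(\sigma_1+it)}^p-\abs{f(\sigma_0+it)}^p\bigr)\dd t.$$
The uniformity in $\sigma>\kappa$ in \cref{lemma:derivative-mean-laplacian-limit} only gives an error of size $o(1)(\sigma_1-\sigma_0)$. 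We therefore choose $\sigma_1$ large, namely $\sigma_1>\kappa_0$ where the series converges absolutely. Then $\abs{f(\sigma+it)}^p\to\abs{f(+\infty)}^p$ uniformly as $\sigma\to\infty$, and $\int_{\sigma_1}^\infty\int_{\sigma}^\infty\Delta\abs{f}^p$ is small uniformly in $t$, since $\abs{f'(\sigma+it)}\le C2^{-\sigma}$ as in the proof of \cref{lemma:partial-t-integral-limit}. Alternatively, we split $[\sigma_0,\infty)$ into $[\sigma_0,\sigma_1]$ and $[\sigma_1,\infty)$ and treat the tail by almost periodicity, where mean values exist.

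Next, let $T\to\infty$. The means $M_p^p(f,\sigma)=\lim_T\frac1{2T}\int_{-T}^T\abs{f(\sigma+it)}^p\,\dd t$ exist for every $\sigma>0$ under the Carlson-type hypothesis; this is \cite{brevig_carlsons_2025}, which I would invoke, or alternatively derive from \cref{lemma:riesz-means-convergence}(c) together with the almost periodicity of Riesz means. Combining the pieces, the limit in $T$ of the weighted double integral exists and equals
$$\lim_{\sigma_1\to\infty}M_p^p(f,\sigma_1)\text{-type constant }\abs{f(+\infty)}^p\text{ subtracted from }M_p^p(f,\sigma_0),$$
that is,
$$\lim_{T\to\infty}\frac{p^2}{2T}\int_{-T}^T\int_{\sigma_0}^\infty\abs{f}^{p-2}\abs{f'}^2(\sigma-\sigma_0)\,\dd\sigma\,\dd t=M_p^p(f,\sigma_0)-\abs{f(+\infty)}^p.$$
Finally, let $\sigma_0\downarrow0$. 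By \cite{brevig_carlsons_2025} (already invoked in the introduction for vertical limits in the Carlson set, and valid for such $f$, which belong to $\sH^p$ by \cref{theorem:characterization-of-hp-carlson-set}), $M_p(f,\sigma_0)\to\norm{f}_{\sH^p}$. This gives the formula.

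The main obstacle is interchanging the $\sigma$-integration over an infinite range with the $T$-limit. The uniform error from \cref{lemma:derivative-mean-laplacian-limit} is not integrable in $\sigma$ on $[\sigma_0,\infty)$. I would handle this by using uniform convergence on compact $[\sigma_0,\sigma_1]$ and controlling the tail $\sigma>\sigma_1$ via absolute convergence, where $f'$ decays like $2^{-\sigma}$. A further point needing care is the existence of $M_p^p(f,\sigma)$ for all $\sigma$ under the hypothesis; I would cite it rather than reprove it.
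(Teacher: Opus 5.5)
Your proposal is correct and follows the paper's proof. You integrate the Green's-theorem identity of \cref{lemma:derivative-mean-laplacian-limit} in $\sigma$ over $[\sigma_0,\infty)$, turn the Laplacian term into the weight $(\sigma-\sigma_0)$ by Tonelli, evaluate the $\partial_\sigma$ term by the fundamental theorem of calculus to get $M_p^p(f,\sigma_0)-\abs{f(+\infty)}^p$, and let $\sigma_0\downarrow0$ via \cite{brevig_carlsons_2025}*{Theorem 1}. Your truncation at $\sigma_1$ only makes explicit the interchange of $\int_{\sigma_0}^\infty\dd\sigma$ with $\lim_{T\to\infty}$, which the paper leaves implicit. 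It works there because the Green's-theorem error $\frac{1}{2T}\int_\sigma^\infty\partial_t\abs{f(\nu\pm iT)}^p\,\dd\nu$ is $O(2^{-\sigma}/T)$ for large $\sigma$, hence integrable.

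One detail needs repair. When $1\leq p<2$ and $f(+\infty)=0$, the factor $\abs{f}^{p-2}$ blows up as $\sigma\to\infty$, so $\abs{f'}\leq C2^{-\sigma}$ alone does not make the tail $\int_{\sigma_1}^\infty\Delta\abs{f}^p(\nu-\sigma_1)\,\dd\nu$ small. You can write $f(s)=n_0^{-s}g(s)$ with $g(+\infty)\neq0$, which gives $\Delta\abs{f}^p=O(n_0^{-p\sigma})$. Alternatively, as the paper does, never bound $\Delta\abs{f}^p$ and use only the first-order bound $p\abs{f}^{p-1}\abs{f'}$ on the $\partial_\sigma$ and $\partial_t$ terms.
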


\begin{proof}
    Use first the fundamental theorem of calculus together with the dominated convergence theorem to compute
    $$\abs{f(\sigma_0+it)}^p=\abs{f(+\infty)}^p-\int_{\sigma_0}^\infty\partial_\sigma\abs{f(\sigma+it)}^p\,\dd\sigma,$$
    which can be justified by similar estimates to those in \cref{lemma:partial-t-integral-limit}. An application of Fubini's theorem (which can be justified by the same estimate) then yields that
    $$\frac{1}{2T}\int_{-T}^T\abs{f(\sigma_0+it)}^p\,\dd t=\abs{f(+\infty)}^p-\int_{\sigma_0}^\infty\frac{1}{2T}\int_{-T}^T\partial_\sigma\abs{f(\sigma+it)}^p\,\dd t\,\dd\sigma.$$
    Similarly, by an application of Tonelli's theorem and the identity $\Delta\abs{f}^p=p^2\abs{f}^{p-2}\abs{f'}^2$, one computes
    \begin{multline*}
    \frac{1}{2T}\int_{\sigma_0}^\infty\int_{-T}^T\int_\sigma^\infty\Delta\abs{f(\nu+it)}^p\,\dd\nu\,\dd t\,\dd\sigma \\
        =\frac{p^2}{2T}\int_{-T}^T\int_{\sigma_0}^\infty\abs{f(\sigma+it)}^{p-2}\abs{f'(\sigma+it)}^2(\sigma-\sigma_0)\,\dd\sigma\,\dd t.
    \end{multline*}
    Using the above two computations together with \cref{lemma:derivative-mean-laplacian-limit}, it follows that
    $$M_p^p(f,\sigma_0)=\abs{f(+\infty)}^p+\lim_{T\to\infty}\frac{p^2}{2T}\int_{-T}^T\int_{\sigma_0}^\infty\abs{f(\sigma+it)}^{p-2}\abs{f'(\sigma+it)}^2(\sigma-\sigma_0)\,\dd\sigma\,\dd t.$$
    The result follows by letting $\sigma_0\downarrow0$ in view of \cite{brevig_carlsons_2025}*{Theorem 1}.
\end{proof}

As an immediate corollary of this we have the following.

\begin{corollary}\label{corollary:weak-littlewood-paley-hp}
    Let $f\in\sH^p$ with $1\leq p<\infty$ and let $\chi\in\sC_p(f)$. Then
    $$\norm{f}_{\sH^p}^p=\abs{f(+\infty)}^p+\lim_{\sigma_0\downarrow0}\lim_{T\to\infty}\frac{p^2}{2T}\int_{-T}^T\int_{\sigma_0}^\infty\abs{f_\chi(\sigma+it)}^{p-2}\abs{f_\chi'(\sigma+it)}^2(\sigma-\sigma_0)\,\dd\sigma\,\dd t.$$
\end{corollary}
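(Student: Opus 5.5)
The plan is to apply \cref{theorem:littlewood-paley-dirichlet-series} directly to the Dirichlet series $f_\chi$ for a fixed $\chi\in\sC_p(f)$.

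First I check the hypotheses. By definition of $\sC_p(f)$, the function $f_\chi$ has an analytic continuation to $\bC_0$ satisfying
$$\sup_{\sigma>0}\sup_{T\geq1}\frac{1}{2T}\int_{-T}^T\abs{f_\chi(\sigma+it)}^p\,\dd t<\infty.$$
Moreover $f_\chi$ is somewhere convergent. Indeed, since $f\in\sH^p$, its Dirichlet series converges absolutely on $\bC_{1/2}$. The same then holds for $f_\chi$, because $\abs{a_n\chi(n)}=\abs{a_n}$. Finally, $f_\chi$ is not required to be nonzero in \cref{theorem:littlewood-paley-dirichlet-series}, so there is nothing further to verify.

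Applying that theorem yields the displayed identity with $f_\chi$ in place of $f$ throughout. In particular the left-hand side is $\norm{f_\chi}_{\sH^p}^p$ and the constant term is $\abs{f_\chi(+\infty)}^p$.

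It remains to replace these two quantities by the corresponding ones for $f$.
\begin{enumerate}[(i)]
\item For the constant term, $f_\chi(+\infty)=a_1\chi(1)=a_1=f(+\infty)$, since $\chi$ is completely multiplicative and so $\chi(1)=1$.
\item For the norm, I need $\norm{f_\chi}_{\sH^p}=\norm{f}_{\sH^p}$. By \cref{theorem:characterization-of-hp-carlson-set}, $f_\chi\in\sH^p$ because $\sC_p(f_\chi)$ is non-empty: it contains the identity character, as $\chi\in\sC_p(f)$. For the equality of norms, I pass through the Bohr lift. The map $g^*\mapsto g^*(\chi\,\cdot\,)$ is a measure-preserving rotation of $\bT^\infty$, so it is an isometry of $H^p(\bT^\infty)$. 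It sends the lift of a Dirichlet polynomial $g$ to the lift of $g_\chi$, hence is isometric on polynomials. By density it extends, so that $(f_\chi)^*=f^*(\chi\,\cdot\,)$ and $\norm{f_\chi}_{\sH^p}=\norm{f}_{\sH^p}$.
\end{enumerate}

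The one point needing care is identifying the abstract completion element with the series $f_\chi$ in item (ii). This is straightforward, since coefficients are determined continuously by the norm: the extended map acts on coefficients by $a_n\mapsto a_n\chi(n)$, so the limit element has exactly the Dirichlet series of $f_\chi$.

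There is also an alternative route that avoids the Bohr lift. The proof of \cref{theorem:littlewood-paley-dirichlet-series} shows the left side equals $\lim_{\sigma\downarrow0}M_p^p(f_\chi,\sigma)$, by \cite{brevig_carlsons_2025}*{Theorem 1}, and that reference equates this limit with $\norm{f}_{\sH^p}^p$ for such $\chi$. Either way, combining the two substitutions gives the corollary.
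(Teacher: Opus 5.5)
Your proposal is correct and takes the same route as the paper: the paper obtains the corollary at once by applying \cref{theorem:littlewood-paley-dirichlet-series} to $f_\chi$ for $\chi\in\sC_p(f)$. Your checks that $f_\chi(+\infty)=f(+\infty)$ and $\norm{f_\chi}_{\sH^p}=\norm{f}_{\sH^p}$ (the latter via the measure-preserving rotation $\xi\mapsto\chi\xi$ of $\bT^\infty$) supply exactly the details the paper leaves implicit.
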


Combining \cref{corollary:weak-littlewood-paley-hp} with \cref{theorem:ergodic-monotone-convergence}, we can now prove  \cref{theorem:littlewood-paley-hp}.

\begin{proof}[Proof of \cref{theorem:littlewood-paley-hp}]
    For $n\in\bZ^+$, define
    $$F_n(\chi)=p^2\int_{1/n}^\infty\abs{f_\chi(\sigma)}^{p-2}\abs{f_\chi'(\sigma)}^2\left(\sigma-\frac{1}{n}\right)\,\dd\sigma$$
    when $\chi\in\sC_p(f)$ and $F_n(\chi)=0$ when $\chi\in\bT^\infty\setminus\sC_p(f)$. Then each function $F_n$ is non-negative, and $\{F_n\}_{n\in\bZ^+}$ is a non-decreasing sequence. Furthermore, the pointwise limit of the sequence is
    $$F(\chi)=\lim_{n\to\infty}F_n(\chi)=p^2\int_0^\infty\abs{f_\chi(\sigma)}^{p-2}\abs{f_\chi'(\sigma)}^2\sigma\,\dd\sigma$$
    for $\chi\in\sC_p(f)$ by the monotone convergence theorem. Combining \cref{corollary:weak-littlewood-paley-hp,theorem:ergodic-monotone-convergence} we can then compute
    \begin{multline*}
    \norm{f}_{\sH^p}^p=\abs{f(+\infty)}^p+\lim_{n\to\infty}\lim_{T\to\infty}\frac{1}{2T}\int_{-T}^TF_n(\chi\fp^{-it})\,\dd t \\=\abs{f(+\infty)}^p+\lim_{T\to\infty}\frac{1}{2T}\int_{-T}^TF(\chi\fp^{-it})\,\dd t
    \end{multline*}
    for almost every $\chi\in\sC_p(f)$, proving the result.
\end{proof}

This takes care of our application of Green's theorem. For the remaining part we will rely heavily on \cref{lemma:riesz-means-convergence}, and in particular in conjunction with \cref{lemma:carlson-derivative-inequality}, where it allows us to estimate both $f$ and $f'$ by their Riesz means in mean. To do this, we will use the following inequalities.

\begin{lemma}\label{lemma:abs-inequalities}
    Let $a,b\in\bC$. If $p\in(1,2]$, then
    $$\abs{\abs{a}^{p-2}a-\abs{b}^{p-2}b}\leq2^{2-p}\abs{a-b}^{p-1}$$
    and if $p\in[2,\infty)$, then
    $$\abs{\abs{a}^{p-2}a-\abs{b}^{p-2}b}\leq(p-1)(\abs{a}+\abs{b})^{p-2}\abs{a-b}.$$
\end{lemma}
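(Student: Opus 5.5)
We need to prove the two elementary inequalities for $\Phi(z)=\abs{z}^{p-2}z$ on $\bC$. Both are invariant under simultaneous rotation and scaling, since $\Phi(\lambda z)=\abs{\lambda}^{p-2}\lambda\Phi(z)$. So we may assume $b\neq0$ (otherwise both are trivial equalities or immediate) and normalize as needed.

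\emph{The case $p\in[2,\infty)$.} We use the mean value inequality along the segment $z(\theta)=b+\theta(a-b)$, $\theta\in[0,1]$. The map $\Phi$ is $C^1$ on $\bC\setminus\{0\}$ as a map $\bR^2\to\bR^2$, and it is $C^1$ on all of $\bC$ when $p\geq2$ (with derivative $0$ at the origin if $p>2$). Writing $\Phi(z)=\abs{z}^{p-2}z$, the real derivative in direction $h$ is
$$D\Phi(z)h=\abs{z}^{p-2}h+(p-2)\abs{z}^{p-4}\Re(\overline{z}h)\,z.$$
Hence $\abs{D\Phi(z)h}\leq(p-1)\abs{z}^{p-2}\abs{h}$. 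Integrating over the segment and using $\abs{z(\theta)}\leq\abs{a}+\abs{b}$ gives the second inequality directly. The segment may pass through $0$ when $2\le p$, but $\Phi$ is $C^1$ there, so no issue arises. For $p=2$ the inequality is trivial.

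\emph{The case $p\in(1,2]$.} This is the Hölder continuity of $\Phi$ with exponent $p-1$, and it is the main obstacle, since the derivative blows up at $0$. Set $q=p-1\in(0,1]$. By symmetry assume $\abs{a}\geq\abs{b}$, and split according to whether $\abs{a-b}$ is small or large compared with $\abs{a}$.

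First write
$$\Phi(a)-\Phi(b)=\abs{a}^{q-1}(a-b)+(\abs{a}^{q-1}-\abs{b}^{q-1})b.$$
Since $q-1\le0$ and $\abs{b}\le\abs{a}$, the second term has modulus
$$(\abs{b}^{q-1}-\abs{a}^{q-1})\abs{b}=\abs{b}^q-\abs{a}^{q-1}\abs{b}\leq\abs{a}^{q-1}(\abs{a}-\abs{b})\leq\abs{a}^{q-1}\abs{a-b},$$
where we used the concavity-type bound $\abs{b}^q\le\abs{a}^{q-1}\abs{b}+\abs{a}^{q-1}(\abs a-\abs b)$, which is just $\abs{b}^q\le\abs{a}^q$ rearranged. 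Hence $\abs{\Phi(a)-\Phi(b)}\leq2\abs{a}^{q-1}\abs{a-b}$.

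If $\abs{a-b}\leq\abs{a}$, then $\abs{a}^{q-1}\le\abs{a-b}^{q-1}$, which gives the bound $2\abs{a-b}^{q}$. If $\abs{a-b}>\abs{a}\ge\abs b$, the trivial bound gives $\abs{\Phi(a)}+\abs{\Phi(b)}\le2\abs{a}^q\le2\abs{a-b}^q$. This yields the constant $2$ rather than $2^{2-p}$.

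To reach the sharp constant $2^{2-p}$, I would note that it equals $1$ at $p=2$ and $2$ as $p\to1$, which suggests that the extremal configuration is $b=-a$: there $\abs{2\Phi(a)}=2\abs{a}^q$ and $\abs{a-b}^q=2^q\abs a^q$, giving ratio $2^{1-q}=2^{2-p}$. I would prove the inequality by normalizing $a=1$ and $b=re^{i\theta}$ with $0\le r\le1$, and then maximizing
$$\frac{\abs{1-r^qe^{i\theta}}^2}{\abs{1-re^{i\theta}}^{2q}}.$$
For fixed $r$, the expression in $\cos\theta$ is a ratio of the form $(A-B c)/(C-Dc)^q$. One shows it is monotone in $c=\cos\theta$ (a direct derivative check), so the maximum is at $\theta=\pi$. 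It then remains to show that $(1+r^q)/(1+r)^q\le2^{1-q}$, which follows from concavity of $t\mapsto t^q$ applied at the points $1$ and $r$.

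The monotonicity in $\theta$ is the step where I expect the real work to lie. For the paper's purposes, any constant would suffice, so the crude constant $2$ argument above would already serve as a fallback.
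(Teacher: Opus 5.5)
\textbf{Where the gap is.} Your treatment of $p\in[2,\infty)$ is correct and coincides with the paper's: integrate $D\Phi$ along the segment, using $\abs{D\Phi(z)h}\le(p-1)\abs{z}^{p-2}\abs{h}$ and $\abs{z(\theta)}\le\abs{a}+\abs{b}$.

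For $p\in(1,2]$, what you actually prove is the bound with constant $2$, while the statement asserts $2^{2-p}$. Your route to the sharp constant rests entirely on the claim that
$$g(c)=\frac{A-Bc}{(C-Dc)^q},\qquad A=1+r^{2q},\ B=2r^q,\ C=1+r^2,\ D=2r,$$
is nonincreasing in $c=\cos\theta\in[-1,1]$. You assert this but do not prove it. The fallback constant $2$ would suffice for the paper's application, but it does not prove the lemma as stated.

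\textbf{How to close it.} The gap closes in a few lines. One computes
$$g'(c)=(C-Dc)^{-q-1}\bigl(qAD-BC+(1-q)BDc\bigr).$$
The bracket is affine in $c$ with slope $(1-q)BD\ge0$, so it suffices that it is $\le0$ at $c=1$. There it equals $2\bigl(qr(1-r^q)^2-r^q(1-r)^2\bigr)$. For $0<r\le1$, nonpositivity is equivalent to $\sqrt q\,r^{(1-q)/2}(1-r^q)\le1-r$. The substitution $r=e^{-x}$ turns this into $\sqrt q\,\sinh(qx/2)\le\sinh(x/2)$ for $x\ge0$, which holds because $0<q\le1$. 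Hence the maximum over $\theta$ is at $\theta=\pi$. Your concavity step $1+r^q\le2^{1-q}(1+r)^q$ then finishes the argument; the cases $r=0$ and $a=b$ are trivial.

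\textbf{Comparison with the paper.} For $p\in(1,2)$ the paper uses the same segment integral as for $p\ge2$ (normalizing $b=1$), namely $\abs{\abs{a}^{p-2}a-1}\le(p-1)\abs{a-1}\int_0^1\abs{t(a-1)+1}^{p-2}\,\mathrm{d}t$. It then uses $\abs{t(a-1)+1}\ge\abs{1-t\abs{a-1}}$ to reduce to an explicit one-variable integral, and finishes by subadditivity and concavity of $x\mapsto x^{p-1}$.

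Your two-variable reduction is actually the sounder route here. Your own formula for $D\Phi$ shows that for $p<2$ its operator norm is $\abs{z}^{p-2}$, attained in the tangential direction $h=iz$, not $(p-1)\abs{z}^{p-2}$. Consequently that first estimate fails in general: for $a=i$, $b=1$, $p=3/2$ the left side is $\sqrt2$ while the right side is at most $2^{-1/4}$. Run with the correct factor, the segment method only yields the constant $2^{2-p}/(p-1)$, which blows up as $p\downarrow1$.

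Your elementary splitting already gives the uniform constant $2$. The completed monotonicity argument gives the sharp constant $2^{2-p}$, attained at $b=-a$.
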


\begin{proof}
    Fix $1<p<\infty$. Suppose without loss of generality that $b=1$ and $a\notin\{0,1\}$. By the fundamental theorem of calculus we can estimate
    $$\abs{\abs{a}^{p-2}a-1}\leq(p-1)\abs{a-1}\int_0^1\abs{t(a-1)+1}^{p-2}\,\dd t.$$
    If $p\geq2$, then the claim follows by an application of the triangle inequality, so suppose that $1<p<2$. Then, using the reverse triangle inequality in the above, we can estimate
    \begin{align*}
    \abs{\abs{a}^{p-2}a-1}
        &\leq(p-1)\abs{a-1}\int_0^1\frac{\dd t}{\abs{t\abs{a-1}-1}^{2-p}} \\
        &=\begin{cases}
        1-(1-\abs{a-1})^{p-1}, &\abs{a-1}\leq1,\\
        1+(\abs{a-1}-1)^{p-1}, &\abs{a-1}>1.
    \end{cases}
    \end{align*}
    For the case when $\abs{a-1}\leq1$, the result follows from the subadditivity of the map $x\mapsto x^{p-1}$, and for the case when $\abs{a-1}>1$, the result follows from the concavity of $x\mapsto x^{p-1}$.
\end{proof}

With this, we show an analogous result to the last part of \cref{lemma:riesz-means-convergence} for the partial derivative $\partial_\sigma\abs{f(\sigma+it)}^p$, which will provide us with precisely what we need. The case of $p=1$ will require stronger assumptions, and so we prove this in a separate lemma after dealing with $1<p<\infty$. This stems from the fact that in the formula
$$\partial_\sigma\abs{f(\sigma+it)}^p=p\abs{f(\sigma+it)}^{p-2}\Re(f'(\sigma+it)\overline{f(\sigma+it)}),$$
the power $p-2$ provides us with sufficiently much cancellation to be able to avoid having to deal with the zero behavior of $f$ if $p>1$, but in the case of $p=1$, the zero behavior of $f$ becomes a genuine issue, and the argument breaks down. To deal with this, we will take an intersection of Carlson sets of horizontal translations to get sufficiently good control.

\begin{lemma}\label{lemma:riesz-means-derivative-limit}
    Let $1<p<\infty$ and let $f$ be a somewhere convergent Dirichlet series with analytic continuation to $\bC_0$ satisfying
    $$\sup_{\sigma>0}\sup_{T\geq1}\frac{1}{2T}\int_{-T}^T\abs{f(\sigma+it)}^p\,\dd t<\infty.$$
    Then, for all $k>3$ and all $\kappa>0$,
    $$\lim_{N\to\infty}\sup_{\sigma>\kappa}\sup_{T\geq1}\frac{1}{2T}\int_{-T}^T\abs{\partial_\sigma\abs{R_N^kf(\sigma+it)}^p-\partial_\sigma\abs{f(\sigma+it)}^p}\,\dd t=0.$$
\end{lemma}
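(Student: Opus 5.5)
We will write $\partial_\sigma\abs{g}^p = p\,\Re\bigl(g'\,\overline{\abs{g}^{p-2}g}\bigr)$, which is the content of \cref{eq:partial-sigma-identity} rewritten. This form is valid (as $0$) even at zeros of $g$ when $p>1$, since $\abs{g}^{p-2}g$ extends continuously by $0$. With $g=R_N^kf$ and $h=f$, the pointwise difference splits into two terms:
$$\abs{\partial_\sigma\abs{g}^p-\partial_\sigma\abs{h}^p}\leq p\abs{g'-h'}\abs{g}^{p-1}+p\abs{h'}\babs{\abs{g}^{p-2}g-\abs{h}^{p-2}h}.$$
The whole argument is then Hölder's inequality on these two terms, using exponents $p$ and $p'=p/(p-1)$, together with uniform control of $p$-means over $\sigma>\kappa$ and $T\geq1$.

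\emph{First term.} By Hölder, the vertical mean of $\abs{g'-h'}\abs{g}^{p-1}$ is at most
$$\Bigl(\tfrac{1}{2T}\int\abs{R_N^kf'-f'}^p\Bigr)^{1/p}\Bigl(\tfrac{1}{2T}\int\abs{R_N^kf}^p\Bigr)^{(p-1)/p}.$$
The second factor is uniformly bounded by \cref{lemma:riesz-means-convergence}(c) together with the Carlson bound on $f$. For the first factor, we apply \cref{lemma:carlson-derivative-inequality} to the function $R_N^kf-f$ with the shift $\kappa/2$. The supremum over $\sigma>\kappa$ of the means of the derivative difference is then controlled by $(1+\kappa/2)(\kappa/2)^{-p}$ times the supremum over $\sigma>\kappa/2$ of the means of $\abs{R_N^kf-f}^p$. (To fit the lemma exactly, apply it to the translate $s\mapsto (R_N^kf-f)(s+\kappa/2)$.) This tends to $0$ by \cref{lemma:riesz-means-convergence}(c) with $\kappa/2$. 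The same lemma also gives a uniform Carlson bound for $f'$ on $\sigma>\kappa$.

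\emph{Second term.} We split into cases.
\begin{enumerate}[(i)]
\item For $1<p\leq2$, \cref{lemma:abs-inequalities} bounds the second term by $2^{2-p}\abs{f'}\abs{R_N^kf-f}^{p-1}$. Hölder with exponents $p$ and $p'$ gives at most $\bigl(\text{mean of }\abs{f'}^p\bigr)^{1/p}\bigl(\text{mean of }\abs{R_N^kf-f}^p\bigr)^{(p-1)/p}$. This tends to $0$ uniformly by the derivative bound and \cref{lemma:riesz-means-convergence}(c).
\item For $p\geq2$, the bound is $(p-1)\abs{f'}(\abs{g}+\abs{f})^{p-2}\abs{g-f}$. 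We use a three-factor Hölder inequality with exponents $p$, $p/(p-2)$ and $p$. This gives the product of the $p$-mean of $f'$, the $(p-2)$-th power of the $p$-mean of $\abs{g}+\abs{f}$ (uniformly bounded by Minkowski), and the $p$-mean of $R_N^kf-f$. The last factor tends to $0$ uniformly.
\end{enumerate}

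The only delicate point is justifying the pointwise formula and inequality at zeros of $f$ or $R_N^kf$. These zeros form a discrete set, hence have measure zero on each vertical line, so the integrals are unaffected. The genuine obstacle is the $p=1$ case, where $\abs{g}^{-1}g$ is not continuous at zeros; that case is excluded here. For $p>1$ the plan is routine once the uniform derivative estimate via \cref{lemma:carlson-derivative-inequality} is in place.
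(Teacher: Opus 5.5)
Your proposal is correct and follows the paper's proof essentially step for step. Both arguments use the same splitting via \cref{eq:partial-sigma-identity}, Hölder with exponents $p$ and $p/(p-1)$ on each term, and \cref{lemma:abs-inequalities} in the two ranges of $p$, combined with \cref{lemma:carlson-derivative-inequality,lemma:riesz-means-convergence}. Two of your steps are slightly more explicit bookkeeping of points the paper leaves implicit: applying \cref{lemma:carlson-derivative-inequality} to the translate $s\mapsto(R_N^kf-f)(s+\kappa/2)$, and using a single three-factor Hölder inequality when $p>2$.
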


\begin{proof}
    Fix $k>3$ and $\kappa>0$, and use the identity \cref{eq:partial-sigma-identity} to estimate
    \begin{multline*}
    \frac{1}{p}\abs{\partial_\sigma\abs{R_N^kf(s)}^p-\partial_\sigma\abs{f(s)}^p}\\
    \leq\abs{R_N^kf(s)}^{p-1}\abs{R_N^kf'(s)-f'(s)}
        +\abs{f'(s)}\babs{\abs{R_N^kf(s)}^{p-2}R_N^kf(s)-\abs{f(s)}^{p-2}f(s)}.
    \end{multline*}
    For the first term, use \cref{lemma:carlson-derivative-inequality,lemma:riesz-means-convergence} together with Hölder's inequality to get that
    \begin{multline*}
    \sup_{\sigma>\kappa}\sup_{T\geq1}\frac{1}{2T}\int_{-T}^T\abs{R_N^kf(\sigma+it)}^{p-1}\abs{R_N^kf'(\sigma+it)-f'(\sigma+it)}\,\dd t \\
        \leq A_\kappa^{\frac{p-1}{p}}\left(\sup_{\sigma>\kappa}\sup_{T\geq1}\frac{1}{2T}\int_{-T}^T\abs{R_N^kf'(\sigma+it)-f'(\sigma+it)}^p\,\dd t\right)^{1/p} \to 0
    \end{multline*}
    as $N\to\infty$, where
    $$A_\kappa=\sup_{N\geq2}\sup_{\sigma>\kappa}\sup_{T\geq1}\frac{1}{2T}\int_{-T}^T\abs{R_N^kf(\sigma+it)}^p\,\dd t.$$
    For the second term, start by using Hölder's inequality to estimate
    \begin{multline*}
    \frac{1}{2T}\int_{-T}^T\abs{f'(\sigma+it)}\babs{\abs{R_N^kf(\sigma+it)}^{p-2}R_N^kf(\sigma+it)-\abs{f(\sigma+it)}^{p-2}f(\sigma+it)}\,\dd t \\
        \leq B_\kappa^{1/p}\left(\frac{1}{2T}\int_{-T}^T\babs{\abs{R_N^kf(\sigma+it)}^{p-2}R_N^kf(\sigma+it)-\abs{f(\sigma+it)}^{p-2}f(\sigma+it)}^{\frac{p}{p-1}}\,\dd t\right)^{\frac{p-1}{p}}
    \end{multline*}
    where
    $$B_\kappa=\sup_{\sigma>\kappa}\sup_{T\geq1}\frac{1}{2T}\int_{-T}^T\abs{f'(\sigma+it)}^p\,\dd t.$$
    If $p\in(1,2]$, use \cref{lemma:abs-inequalities,lemma:riesz-means-convergence} to get that
    \begin{multline*}
    \sup_{\sigma>\kappa}\sup_{T\geq1}\frac{1}{2T}\int_{-T}^T\babs{\abs{R_N^kf(\sigma+it)}^{p-2}R_N^kf(\sigma+it)-\abs{f(\sigma+it)}^{p-2}f(\sigma+it)}^{\frac{p}{p-1}}\,\dd t \\
    \leq 2^{\frac{(2-p)p}{p-1}}\sup_{\sigma>\kappa}\sup_{T\geq1}\frac{1}{2T}\int_{-T}^T\abs{R_N^kf(\sigma+it)-f(\sigma+it)}^p\,\dd t\to0
    \end{multline*}
    as $N\to\infty$, and if $p\in(2,\infty)$, use \cref{lemma:abs-inequalities,lemma:riesz-means-convergence} together with Hölder's inequality to get that
    \begin{multline*}
    \sup_{\sigma>\kappa}\sup_{T\geq1}\frac{1}{2T}\int_{-T}^T\babs{\abs{R_N^kf(\sigma+it)}^{p-2}R_N^kf(\sigma+it)-\abs{f(\sigma+it)}^{p-2}f(\sigma+it)}^{\frac{p}{p-1}}\,\dd t \\
    \leq (p-1)(C_\kappa+D_\kappa)^{p-2}\left(\sup_{\sigma>\kappa}\sup_{T\geq1}\frac{1}{2T}\int_{-T}^T\abs{R_N^kf(\sigma+it)-f(\sigma+it)}^p\,\dd t\right)^{1/p}\to0
    \end{multline*}
    as $N\to\infty$, where
    $$C_\kappa=\sup_{N\geq2}\sup_{\sigma>\kappa}\sup_{T\geq1}\frac{1}{2T}\int_{-T}^T\abs{R_N^kf(\sigma+it)}^p\,\dd t,\quad D_\kappa=\sup_{\sigma>\kappa}\sup_{T\geq1}\frac{1}{2T}\int_{-T}^T\abs{f(\sigma+it)}^p\,\dd t.$$
    The result now follows after combining the above.
\end{proof}

This takes care of the case when $1<p<\infty$. We now deal with the case when $p=1$.

\begin{lemma}\label{lemma:riesz-means-derivative-limit-p-1}
    Let $f$ be a somewhere convergent Dirichlet series with analytic continuation to $\bC_0$ satisfying
    $$\sup_{\sigma>\kappa}\sup_{T\geq1}\frac{1}{2T}\int_{-T}^T\abs{f(\sigma+it)}^2\,\dd t<\infty$$
    for all $\kappa>0$. Then, for all $k>3$ and all $\kappa>0$,
    $$\lim_{N\to\infty}\sup_{\sigma>\kappa}\sup_{T\geq1}\frac{1}{2T}\int_{-T}^T\babs{\partial_\sigma\abs{R_N^kf(\sigma+it)}-\partial_\sigma\abs{f(\sigma+it)}}\,\dd t=0.$$
\end{lemma}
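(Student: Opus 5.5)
The plan is to control the difference pointwise using the identity \cref{eq:partial-sigma-identity} with $p=1$. Wherever $f(s)\neq0$ and $R_N^kf(s)\neq0$ it reads $\partial_\sigma\abs{g(s)}=\Re\bigl(g'(s)\overline{u_g(s)}\bigr)$ with $u_g=g/\abs{g}$. Hence, for $s=\sigma+it$,
$$\babs{\partial_\sigma\abs{R_N^kf(s)}-\partial_\sigma\abs{f(s)}}\leq\abs{R_N^kf'(s)-f'(s)}+\abs{f'(s)}\,\abs{u_{R_N^kf}(s)-u_f(s)}.$$
For every $t$ outside the discrete zero set this holds almost everywhere. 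For the second term I will use the elementary bound $\abs{a/\abs{a}-b/\abs{b}}\leq\min\{2,\,2\abs{a-b}/\abs{b}\}$, and split according to whether $\abs{f(s)}>\delta$ or $\abs{f(s)}\leq\delta$. Throughout I fix $\kappa>0$ and apply \cref{lemma:carlson-derivative-inequality} and \cref{lemma:riesz-means-convergence} with $p=2$ to the translate $f(\cdot+\kappa/2)$. This gives a uniform $L^2$-mean bound $B$ for $f'$ on $\bC_\kappa$. It also gives $L^2$-mean convergence of both $R_N^kf\to f$ and $R_N^kf'\to f'$ on $\bC_\kappa$, uniformly in $\sigma>\kappa$ and $T\geq1$; the derivative statement follows by applying \cref{lemma:carlson-derivative-inequality} to $R_N^kf-f$.

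The first term then tends to $0$ by the Cauchy--Schwarz inequality. On $\{\abs{f}>\delta\}$ the second term is at most $2\delta^{-1}\abs{f'}\abs{R_N^kf-f}$, and its mean is bounded by $2\delta^{-1}B^{1/2}$ times the $L^2$-mean of $R_N^kf-f$. This tends to $0$ as $N\to\infty$ for each fixed $\delta$. On $\{\abs{f}\leq\delta\}$ the second term is at most $2\abs{f'}$. By Cauchy--Schwarz its mean is bounded by
$$2B^{1/2}\left(\frac{\abs{\{t\in[-T,T]:\abs{f(\sigma+it)}\leq\delta\}}}{2T}\right)^{1/2}.$$
The plan is then to let $N\to\infty$ first and $\delta\downarrow0$ afterwards.

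The small-value density is the main obstacle, for two reasons. First, \cref{lemma:small-value-density} only controls it for $\sigma$ in a bounded interval $(\alpha,\beta)$. Second, it only gives a $\limsup_{T\to\infty}$, whereas we need $\sup_{T\geq1}$.

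For large $\sigma$ I avoid the density altogether. I choose $\beta$ so large that the Dirichlet series of $f$ and $f'$ converge uniformly on $\bC_\beta$ and $\norm{f'}_{\bC_\beta}<\varepsilon$, which is possible since $f'(+\infty)=0$. By \cref{lemma:riesz-means-convergence}(b), applied to $f$ and $f'$, we have $R_N^kf'\to f'$ uniformly on $\bC_\beta$. Using the trivial bound $\abs{\partial_\sigma\abs{g}}\leq\abs{g'}$, the whole difference is then at most $2\varepsilon+o(1)$ on $\bC_\beta$, uniformly in $T$.

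For $\sigma\in(\kappa,\beta]$ I apply \cref{lemma:small-value-density}. It provides $\delta$ and $T_0$ such that the density is below $\varepsilon$ for all $T\geq T_0$, uniformly in $\sigma$. For the remaining range $1\leq T\leq T_0$, all relevant points lie in the compact rectangle $[\kappa,\beta]\times[-T_0,T_0]$. There $f$ has finitely many zeros, so the measure of $\{t\in[-T,T]:\abs{f(\sigma+it)}\leq\delta\}$ tends to $0$ as $\delta\downarrow0$, uniformly in $\sigma$. This follows, for example, from the product estimate of \cref{lemma:zero-product-estimate}, or directly from \cref{lemma:log-estimate-c-family} together with the inequality used in the proof of \cref{lemma:small-value-density}. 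After shrinking $\delta$ accordingly, both ranges of $T$ are covered. The $\{\abs{f}>\delta\}$ part vanishes as $N\to\infty$ for this fixed $\delta$, and then letting $\varepsilon\downarrow0$ concludes the proof.
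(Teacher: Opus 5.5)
Your argument is correct and is essentially the paper's: the same pointwise bound from \cref{eq:partial-sigma-identity}, the same split into $\{\abs{f}\le\delta\}$ and $\{\abs{f}>\delta\}$ with Cauchy--Schwarz, and the same use of \cref{lemma:riesz-means-convergence,lemma:carlson-derivative-inequality,lemma:small-value-density} applied to a horizontal translate. Your separate handling of large $\sigma$ (where $f'$ and $R_N^kf'-f'$ are uniformly small) and of $1\le T\le T_0$ (finitely many zeros in a compact rectangle) fills in what the paper's one-line conclusion glosses over: \cref{lemma:small-value-density} only controls $\limsup_{T\to\infty}$ on bounded $\sigma$-intervals, and when $f(+\infty)=0$ the small-value density equals $1$ for all large $\sigma$.
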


\begin{proof}
    Fix $\kappa>0$ and $k>3$. Use \cref{eq:partial-sigma-identity} to estimate
    $$\babs{\partial_\sigma\abs{R_N^kf(\sigma+it)}-\partial_\sigma\abs{f(\sigma+it)}}\leq \abs{R_N^kf'(\sigma+it)}+\abs{f'(\sigma+it)}$$
    and
    \begin{multline*}
    \babs{\partial_\sigma\abs{R_N^kf(\sigma+it)}-\partial_\sigma\abs{f(\sigma+it)}} \\
    \leq\abs{R_N^kf'(\sigma+it)-f'(\sigma+it)}+\frac{2}{\abs{f(\sigma+it)}}\abs{R_N^kf'(\sigma+it)}\abs{R_N^kf(\sigma+it)-f(\sigma+it)}
    \end{multline*}
    Splitting the integral into one where $\abs{f(\sigma+it)}<\delta$ and another where $\abs{f(\sigma+it)}\geq\delta$ and using these estimates together with the triangle inequality and Cauchy--Schwarz, we have
    \begin{align*}
    \frac{1}{2T}\int_{-T}^T\babs{\partial_\sigma\abs{R_N^kf(\sigma+it)}-&\partial_\sigma\abs{f(\sigma+it)}}\,\dd t \\
        &\leq \left(B_\kappa^{1/2}+D_\kappa^{1/2}\right)\left(\frac{\abs{\{t\in[-T,T]:\abs{f(\sigma+it)}<\delta\}}}{2T}\right)^{1/2} \\
        &\qquad+\frac{1}{2T}\int_{-T}^T\abs{R_N^kf'(\sigma+it)-f'(\sigma+it)}\,\dd t \\
        &\qquad+\frac{2D_\kappa^{1/2}}{\delta}\left(\frac{1}{2T}\int_{-T}^T\abs{R_N^kf(\sigma+it)-f(\sigma+it)}^2\,\dd t\right)^{1/2}
    \end{align*}
    for $\sigma>\kappa$, $T\geq 1$, $N\geq2$ and $\delta>0$
    where
    $$B_\kappa=\sup_{\sigma>\kappa}\sup_{T\geq1}\frac{1}{2T}\int_{-T}^T\abs{f'(\sigma+it)}^2\,\dd t,\quad D_\kappa=\sup_{N\geq2}\sup_{\sigma>\kappa}\sup_{T\geq1}\frac{1}{2T}\int_{-T}^T\abs{R_N^kf'(\sigma+it)}^2\,\dd t.$$
    The result follows by taking suprema over $T\geq1$ and $\sigma>\kappa$, letting $N\to\infty$ and letting $\delta\downarrow0$ by \cref{lemma:riesz-means-convergence,lemma:carlson-derivative-inequality,lemma:small-value-density}.
\end{proof}

With this, we can now prove a version of the Hardy--Stein identity under the above assumptions.

\begin{theorem}\label{theorem:hardy-stein-dirichlet-series}
    Let $1\leq p<\infty$ and let $f$ be a somewhere convergent Dirichlet series with analytic continuation to $\bC_0$. If $p\neq1$, suppose that
    $$\sup_{\sigma>0}\sup_{T\geq1}\frac{1}{2T}\int_{-T}^T\abs{f(\sigma+it)}^p\,\dd t<\infty$$
    and if $p=1$, suppose that
    $$\sup_{\sigma>\kappa}\sup_{T\geq1}\frac{1}{2T}\int_{-T}^T\abs{f(\sigma+it)}^2\,\dd t<\infty$$
    for all $\kappa>0$. Then the map $\sigma\mapsto M_p^p(f,\sigma)$ is continuously differentiable on $(0,\infty)$, and
    $$\partial_\sigma M_p^p(f,\sigma)=-\lim_{T\to\infty}\frac{p^2}{2T}\int_{-T}^T\int_\sigma^\infty\abs{f(\nu+it)}^{p-2}\abs{f'(\nu+it)}^2\,\dd\nu\,\dd t$$
    where the limit converges uniformly in $\sigma$ on $(\kappa,\infty)$ for all $\kappa>0$.
\end{theorem}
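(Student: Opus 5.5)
The plan is to pass from the Riesz means, which are Dirichlet polynomials and hence almost periodic, to $f$ itself. For each $N$, the Dirichlet polynomial $R_N^kf$ with $k>3$ is almost periodic on every half-plane. Consequently $M_p^p(R_N^kf,\sigma)$ exists and is continuously differentiable in $\sigma$, and its derivative is
$$\partial_\sigma M_p^p(R_N^kf,\sigma)=\lim_{T\to\infty}\frac{1}{2T}\int_{-T}^T\partial_\sigma\abs{R_N^kf(\sigma+it)}^p\,\dd t,$$
where the limit is uniform in $\sigma>\kappa$. This follows because $\partial_\sigma\abs{R_N^kf}^p=p\abs{R_N^kf}^{p-2}\Re(R_N^kf'\,\overline{R_N^kf})$ is an almost periodic function, continuous also at zeros when $p\geq1$ (in the sense of being bounded and measurable with mean value). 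One can also argue by differentiating under the uniform mean.

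The first step is to set $G(\sigma)=\lim_{T\to\infty}\frac{1}{2T}\int_{-T}^T\partial_\sigma\abs{f(\sigma+it)}^p\,\dd t$ and show that it exists uniformly in $\sigma>\kappa$. For $1<p<\infty$ we apply \cref{lemma:riesz-means-derivative-limit}; for $p=1$ we apply \cref{lemma:riesz-means-derivative-limit-p-1}, whose hypothesis is exactly the $p=1$ assumption. Either lemma gives
$$\sup_{\sigma>\kappa}\sup_{T\geq1}\frac{1}{2T}\int_{-T}^T\babs{\partial_\sigma\abs{R_N^kf}^p-\partial_\sigma\abs{f}^p}\,\dd t\to0,$$
and the means for $R_N^kf$ exist uniformly in $\sigma$. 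A standard $\varepsilon/3$ argument then shows that the means for $f$ form a uniformly Cauchy family in $T$, so $G$ exists. It also gives $\partial_\sigma M_p^p(R_N^kf,\sigma)\to G(\sigma)$ uniformly on $(\kappa,\infty)$, which makes $G$ continuous.

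The second step identifies $G$ as the derivative of $M_p^p(f,\cdot)$. By \cref{lemma:riesz-means-convergence}(c) we have $M_p^p(R_N^kf,\sigma)\to M_p^p(f,\sigma)$ uniformly on $(\kappa,\infty)$. In the case $p=1$, note that the $L^2$ bound implies the $L^1$ Carlson bound on each $\bC_\kappa$, so after a translation this still applies. Existence of $M_p^p(f,\sigma)$ comes from \cite{brevig_carlsons_2025}, or again from uniform convergence. Writing $M_p^p(R_N^kf,\sigma_2)-M_p^p(R_N^kf,\sigma_1)=\int_{\sigma_1}^{\sigma_2}\partial_\sigma M_p^p(R_N^kf,\sigma)\,\dd\sigma$ and letting $N\to\infty$ yields $M_p^p(f,\sigma_2)-M_p^p(f,\sigma_1)=\int_{\sigma_1}^{\sigma_2}G$. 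Since $G$ is continuous, $M_p^p(f,\cdot)$ is $C^1$ with derivative $G$.

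The third step applies \cref{lemma:derivative-mean-laplacian-limit}. For $p=1$ we may shift horizontally by $\kappa/2$ so that its hypothesis holds. The lemma says that $\frac{1}{2T}\int_{-T}^T\partial_\sigma\abs{f}^p\,\dd t$ and $-\frac{1}{2T}\int_{-T}^T\int_\sigma^\infty\Delta\abs{f}^p\,\dd\nu\,\dd t$ differ by a quantity that tends to zero uniformly in $\sigma>\kappa$. Using $\Delta\abs{f}^p=p^2\abs{f}^{p-2}\abs{f'}^2$, the stated formula follows, and uniformity is inherited from the first step.

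The main obstacle is the almost periodic differentiation for the polynomials $R_N^kf$ when $p<2$. There $\abs{g}^{p-2}$ blows up at zeros, although the product $\abs{g}^{p-1}\abs{g'}$ stays bounded. I would handle this by noting that $\partial_\sigma\abs{g}^p$ is a bounded function, continuous away from a null set, and that the mean value identity $M_p^p(g,\sigma_2)-M_p^p(g,\sigma_1)=\int_{\sigma_1}^{\sigma_2}\lim_T(\cdots)\,\dd\sigma$ follows directly from Fubini and the uniform existence of $p$-means. This identity is in fact all that the second step needs, so actual differentiability of $M_p^p(R_N^kf,\cdot)$ is not required. What remains is to check that the $T$-limit of the averages of $\partial_\sigma\abs{g}^p$ exists for almost periodic $g$. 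The quickest route is to approximate $\abs{z}^{p-2}\bar z$ by uniformly continuous truncations and control the error via the small-value density of \cref{lemma:small-value-density}.
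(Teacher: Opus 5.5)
Your proposal is correct, and its skeleton is the paper's. \cref{lemma:riesz-means-derivative-limit,lemma:riesz-means-derivative-limit-p-1} plus a Moore--Osgood argument show that $G(\sigma)=\lim_{T\to\infty}\frac{1}{2T}\int_{-T}^T\partial_\sigma\abs{f(\sigma+it)}^p\,\dd t$ exists uniformly on $(\kappa,\infty)$. Then \cref{lemma:derivative-mean-laplacian-limit} turns this into the stated formula.

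The difference is in how you identify $G$ with $\partial_\sigma M_p^p(f,\sigma)$. The paper applies \cite{rudin_principles_1976}*{Theorem 7.17} to the finite averages $\Phi_T(\sigma)=\frac{1}{2T}\int_{-T}^T\abs{f(\sigma+it)}^p\,\dd t$ of $f$ itself. Their derivatives are exactly the averages defining $G$, so existence of $M_p^p(f,\sigma)$, differentiability, and continuity of the derivative all come out at once. Each $\Phi_T'$ is continuous by dominated convergence, also for $p=1$, since only finitely many zeros of $f$ lie on a bounded vertical segment.

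You instead integrate $G_N$ over $[\sigma_1,\sigma_2]$ and let $N\to\infty$. This works, but it also needs the convergence $M_p^p(R_N^kf,\sigma)\to M_p^p(f,\sigma)$ (a consequence of \cref{lemma:riesz-means-convergence}(c)) and the continuity of every $G_N$. So it is longer without yielding anything extra. After you drop differentiability of $M_1(R_N^kf,\cdot)$ in your last paragraph, you must recover the continuity of $G_N$ for $p=1$ separately. It does follow from your truncation: the means of the truncated almost periodic functions are continuous in $\sigma$ and converge to $G_N$ uniformly on bounded $\sigma$-intervals.

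Two smaller points. First, the real obstruction is only at $p=1$, not at $p<2$. For $1<p<2$ the map $z\mapsto\abs{z}^{p-2}\bar z$ is uniformly continuous by the first inequality of \cref{lemma:abs-inequalities}, so $\partial_\sigma\abs{R_N^kf}^p$ is genuinely almost periodic. For $p=1$, the claim in your first paragraph that it is continuous at zeros is false, since $\bar z/\abs{z}$ has no limit at $0$.

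Second, your truncation via \cref{lemma:small-value-density} and your horizontal shift for $p=1$ make explicit two things the paper uses tacitly:
\begin{enumerate}[(i)]
\item the existence, for each $N$, of the inner means $\lim_{T\to\infty}\frac{1}{2T}\int_{-T}^T\partial_\sigma\abs{R_N^kf(\sigma+it)}\,\dd t$, on which the Moore--Osgood step rests;
\item the fact that \cref{lemma:riesz-means-convergence,lemma:derivative-mean-laplacian-limit} require the $L^1$ Carlson bound on all of $\bC_0$. Under the $p=1$ hypothesis this is only guaranteed for shifted functions $s\mapsto f(s+\kappa/2)$.
\end{enumerate}
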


\begin{proof}
    Fix $\kappa>0$. By \cref{lemma:derivative-mean-laplacian-limit} it holds that
    $$\lim_{T\to\infty}\sup_{\sigma>\kappa}\Abs{-\frac{1}{2T}\int_{-T}^T\int_\sigma^\infty\Delta\abs{f(\nu+it)}^p\,\dd\nu\,\dd t-\frac{1}{2T}\int_{-T}^T\partial_\sigma\abs{f(\sigma+it)}^p\,\dd t}=0.$$
    Next, by a similar argument to the Moore--Osgood theorem (e.g., \cite{rudin_principles_1976}*{Theorem 7.11}), \cref{lemma:riesz-means-derivative-limit-p-1,lemma:riesz-means-derivative-limit} imply that the limit
    $$\lim_{T\to\infty}\frac{1}{2T}\int_{-T}^T\partial_\sigma\abs{f(\sigma+it)}^p\,\dd t$$
    converges uniformly in $\sigma$ on $(\kappa,\infty)$. From here the result follows readily by \cite{rudin_principles_1976}*{Theorem 7.17} as we may then interchange the limit and the derivative.
\end{proof}

From this theorem, we can deduce \cref{theorem:hardy-stein-hp}.

\begin{proof}[Proof of \cref{theorem:hardy-stein-hp}]
    If $p\neq 1$, then the result is immediate from \cref{theorem:hardy-stein-dirichlet-series}, so suppose $p=1$. Recall that Helson's inequality \cite{helson_hankel_2006} states that if $g(s)=\sum_{n=1}^\infty b_nn^{-s}$ is in $\sH^1$, then
    $$\left(\sum_{n=1}^\infty\frac{\abs{b_n}^2}{d(n)}\right)^{1/2}\leq\norm{g}_{\sH^1},$$
    where $d(n)$ denotes the number of divisors of $n$. Applying this to the horizontal translations $H_\kappa f(s)=f(s+\kappa)$ with $\kappa>0$ and using Cauchy--Schwarz together with the fact that $d(n)=O(n^\varepsilon)$ as $n\to\infty$ for all $\varepsilon>0$, it follows that $H_\kappa f\in\sH^2$. Consequently, the set
    $$E=\bigcap_{n\in\bZ^+}\sC_2(H_{1/n}f)\subseteq\bT^\infty$$
    has full measure, and $f_\chi$ satisfies the assumptions of \cref{theorem:hardy-stein-dirichlet-series} for all $\chi\in E$, from which the result follows.
\end{proof}

That one has to treat the case $p=1$ with more care seems to stem from duality. Indeed in the proof for the case $1<p<\infty$, the argument repeatedly makes use of Hölder's inequality, which in this range works out fine. In the edge case $p=1$ the Hölder conjugate is $\infty$, which should indicate why the methods fail: duality asks for boundedness, which we in general do not have in the range $0<\Re s\leq1/2$. We suspect that this is a failure of our methods, and so we raise the following question, the answer to which we believe is yes.

\begin{question}
    Can one take $E=\sC_p(f)$ in \cref{theorem:hardy-stein-hp} also in the case $p=1$?
\end{question}

\section{Jensen's formula and the mean counting function}\label{section:jensens-formula-and-mean-counting-function}

We now return to the zero behavior of Dirichlet series, with the end goal of proving \cref{theorem:mean-counting-limit-interchange}. The key to proving this is to first establish a version of Jensen's formula for the Jessen function, the existence of which we dealt with in \cref{section:zeros-and-jessen}. In particular, we will prove the following theorem.

\begin{theorem}\label{theorem:jensen-formula}
    Let $1\leq p<\infty$ and let $f$ be a somewhere convergent Dirichlet series with $f(+\infty)\neq0$ and with analytic continuation to $\bC_0$ satisfying
    $$\sup_{\sigma>0}\sup_{T\geq1}\frac{1}{2T}\int_{-T}^T\abs{f(\sigma+it)}^p\,\dd t<\infty.$$
    Then
    $$\lim_{T\to\infty}\frac{\pi}{T}\sum_{\substack{s\in\sZ(f)\\\abs{\Im s}<T\\\Re s>\sigma_0}}(\Re s-\sigma_0)=\sJ(f,\sigma_0)-\log\abs{f(+\infty)}$$
    where the limit converges uniformly in $\sigma_0$ on $(\kappa,\infty)$ for all $\kappa>0$.
\end{theorem}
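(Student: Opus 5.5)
We follow Littlewood's argument principle, as in \cite{brevig_mean_2021}*{Lemma 6.1}, applied on a rectangle. Fix $\kappa>0$ and $\sigma_0>\kappa$. Since $f(+\infty)\neq0$, almost periodicity gives a $\sigma_1>\sigma_0$ such that $f$ has no zeros in $\overline{\bC_{\sigma_1}}$. On this half-plane $\log f$ is a single-valued analytic almost periodic function tending to $\log f(+\infty)$.

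Apply Littlewood's lemma to $f$ on the rectangle $[\sigma_0,\sigma_1]\times[-T,T]$. We may assume no zeros lie on the horizontal edges: for each $T$ we can pick $T'$ with $\abs{T'-T}\leq1$ avoiding zeros, and the error this introduces is estimated in the last step. Littlewood's lemma gives
\begin{multline*}
2\pi\sum_{\substack{s\in\sZ(f)\\\abs{\Im s}<T\\\Re s>\sigma_0}}(\Re s-\sigma_0)=\int_{-T}^T\log\abs{f(\sigma_0+it)}\,\dd t-\int_{-T}^T\log\abs{f(\sigma_1+it)}\,\dd t\\
+\int_{\sigma_0}^{\sigma_1}\arg f(\sigma+iT)\,\dd\sigma-\int_{\sigma_0}^{\sigma_1}\arg f(\sigma-iT)\,\dd\sigma .
\end{multline*}
Here $\arg f$ is obtained by continuous variation from $\sigma_1$ along the horizontal segments.

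Divide by $2T$ and let $T\to\infty$.
\begin{enumerate}[(i)]
\item By \cref{theorem:existence-of-jessen-function}, the first term converges to $\sJ(f,\sigma_0)$, uniformly in $\sigma_0$ on $(\kappa,\infty)$. We use the uniform version available because $f(+\infty)\neq0$.
\item On $\bC_{\sigma_1}$, $\log\abs{f}$ is the real part of the almost periodic function $\log f$, whose mean is $\log f(+\infty)$. So the second term tends to $\log\abs{f(+\infty)}$.
\item For large $\sigma_0$ (say $\sigma_0>\sigma_1$ for a fixed $\sigma_1$), both sides are already handled by almost periodicity. Uniformity is therefore only needed on a bounded interval $(\kappa,\beta)$, where we may fix a single $\sigma_1$.
\end{enumerate}

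The main obstacle is showing that the argument integrals are $o(T)$ uniformly for $\sigma_0\in(\kappa,\beta)$. The standard device is Jensen's formula/\cref{lemma:zero-count-bound}. The variation of $\arg f$ along $[\sigma_0,\sigma_1]\pm iT$ is bounded by $\pi(1+q)$, where $q$ is the number of zeros of $\Re(f(s\pm iT))$ on that segment. Then $q$ is bounded by the zero count of
\[
g(z)=\bigl(f(z+iT)+\overline{f(\bar z+iT)}\bigr)/2
\]
in a disk centred at $\sigma_1$, with $\abs{g(\sigma_1)}\geq\Re f$ bounded below on $\bC_{\sigma_1}$. The constant depends only on the geometry and on this lower bound, so \cref{lemma:zero-count-bound} gives
\[
q\lesssim\log\bigl(\norm{V_{\pm T}f}_{K}+1\bigr)
\]
for a compact set $K\subseteq\bC_{\kappa/2}$. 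Lemma~\ref{lemma:order-strict-inequality} gives $\norm{V_Tf}_K=O(T^{1/p})$, so the argument terms are $O(\log T)=o(T)$. This bound is uniform in $\sigma_0\in(\kappa,\beta)$.

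The same zero-count bound controls the perturbation from $T$ to $T'$. The number of zeros with $\Re s>\kappa$ in a strip of height $2$ is $O(\log T)$, and each contributes at most $\sigma_1$ to the sum, so the adjustment is also $o(T)$. Combining the three limits gives the formula, with the asserted uniformity.
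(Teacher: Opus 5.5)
Your proof is correct, but it controls the horizontal edges by a different route from the paper.

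\emph{The paper's route.} It integrates the horizontal $\arg f$ terms by parts. They become $\Im\int_{\sigma_0}^{\beta}\frac{f'}{f}(\sigma\pm i\tau_n)(\sigma-\sigma_0)\,\dd\sigma$, plus a right-edge term $(\beta-\sigma_0)\Re\int\frac{f'}{f}(\beta+it)\,\dd t$ that almost periodicity disposes of. It then constructs special heights $\tau_n$ with $\abs{\tau_n-n}\leq1$ (\cref{lemma:sequence-of-tau-log-estimate}), whose horizontal lines stay at distance $\gtrsim1/\log(\norm{V_nf}_R+1)$ from the zeros. An argument bound in the spirit of \cref{lemma:zero-product-estimate}, together with Borel--Carath\'eodory and Cauchy estimates, gives $\abs{\Im f'/f}\lesssim\log^2(\norm{V_nf}_R+1)$ pointwise on those lines. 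General $T$ is recovered by squeezing between $\tau_{n-2}$ and $\tau_{n+2}$, using monotonicity of the zero sum in $T$.

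\emph{Your route.} You keep $\arg f$ and bound it by the classical sign-change device for $\Re f$, as in the proof of $S(T)=O(\log T)$ for $\zeta$. The count comes from \cref{lemma:zero-count-bound} applied to the reflected function $g$.

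\emph{Comparison.} Your argument works at essentially every height and uses only \cref{lemma:zero-count-bound} and \cref{lemma:order-strict-inequality}. It gives the sharper bound $O(\log T)$ and replaces the special sequence with a cheap perturbation $T\to T'$, paid for by an $O(\log T)$ zero count in boxes of height $2$. The paper's route gives pointwise control of the logarithmic derivative on selected lines, which is more information, but it needs more machinery and some steps are only sketched.

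\emph{Details to fix.}
\begin{enumerate}[(i)]
\item Zero-freeness of $\overline{\bC_{\sigma_1}}$ alone does not make $\Re f$ bounded below there, nor does it make $\log f$ bounded and almost periodic. First replace $f$ by $e^{-i\arg f(+\infty)}f$, which has the same zeros and the same $\abs{f}$. Then choose $\sigma_1$ so that $\abs{f-f(+\infty)}<\abs{f(+\infty)}/2$ on a neighbourhood of $\overline{\bC_{\sigma_1}}$. This gives $\abs{g(\sigma_1)}\geq\abs{f(+\infty)}/2$ uniformly in $T$.
\item Take both disks centred at $\sigma_1$ with radii strictly between $\sigma_1-\kappa$ and $\sigma_1-\kappa/2$. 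For example, use $\sigma_1-3\kappa/4$ and $\sigma_1-\kappa/2$. Then the smaller disk contains $[\kappa,\sigma_1]$ and the larger one stays inside $\bC_{\kappa/2}$.
\item Choose $T'$ so that no zeros lie on $[\kappa,\sigma_1]\times\{\pm T'\}$. This makes $T'$ independent of $\sigma_0$, which keeps the uniformity clean.
\end{enumerate}
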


To prove this, we will argue through Littlewood's argument principle in the same way as \cite{brevig_mean_2021}*{Lemma 6.1}. We start with a lemma.

\begin{lemma}\label{lemma:sequence-of-tau-log-estimate}
    Let $1\leq p<\infty$, let $f$ be a somewhere convergent Dirichlet series that is not identically zero with analytic continuation to $\bC_0$ satisfying
    $$\sup_{\sigma>0}\sup_{T\geq1}\frac{1}{2T}\int_{-T}^T\abs{f(\sigma+it)}^p\,\dd t<\infty,$$
    let $0<\alpha<\beta$ and let $\varepsilon\in(0,\alpha)$. Set $R=[\alpha-\varepsilon,\beta+\varepsilon]\times[-1-\varepsilon,1+\varepsilon]$. Then there exists a sequence $\{\tau_n\}_{n\in\bZ}$ of real numbers with $\abs{\tau_n-n}\leq1$ and $\tau_{-n}=-\tau_n$ for all $n\in\bZ$ and a $C>0$ such that $f(\sigma+i\tau_n)\neq0$ and
    \begin{equation}\label{eq:imaginary-log-derivative-estimate}
    \Abs{\Im\frac{f'(\sigma+i\tau_n)}{f(\sigma+i\tau_n)}}\leq C\log^2(\norm{V_{n}f}_R+1)
    \end{equation}
    for all $\sigma\in[\alpha,\beta]$ and all $n\in\bZ$.
\end{lemma}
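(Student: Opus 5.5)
Write $Q_n=[\alpha-\varepsilon,\beta+\varepsilon]\times[n-1-\varepsilon,n+1+\varepsilon]=R+in$, so that $\norm{V_nf}_R=\norm{f}_{Q_n}$. The plan is a local Borchsenius--Jessen argument on each $Q_n$: choose $\tau_n\in[n-1,n+1]$ so that the horizontal segment $[\alpha,\beta]\times\{\tau_n\}$ stays a controlled distance away from the zeros of $f$. Then $f'/f$ on that segment is bounded by the zero sum plus a harmonic remainder. We do this for $n\geq0$ and set $\tau_{-n}=-\tau_n$, treating $n<0$ by the same construction applied to $\overline{f(\bar s)}$, which is again a Dirichlet series satisfying the same Carlson condition. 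The segment for $-n$ is the conjugate of the segment for $n$, and $\abs{\Im(f'/f)}$ is preserved under conjugation, so it suffices to produce the bound at $\tau_n$ and $-\tau_n$ simultaneously. Concretely, we choose $\tau_n$ avoiding bad sets for both $f$ on $Q_n$ and for $f$ on $\overline{Q_n}$; since each bad set has small measure, such a choice exists. For $n=0$ take $\tau_0=0$ only if admissible; otherwise it is simpler to allow $\abs{\tau_0}\leq 1$ with $\tau_0=-\tau_0$ forcing $\tau_0=0$, so we may need to shrink things. In practice we choose $\tau_0=0$ and absorb the possible zeros on the real axis by observing that only finitely many configurations matter. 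This is the delicate point for $n=0$, and I would handle it by perturbing $\alpha,\beta$ if needed. I expect the paper simply has the bound with a constant absorbing $n=0$.

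\textbf{Uniform lower bound.} Take a slightly smaller inner rectangle $R'=[\alpha-\varepsilon/2,\beta+\varepsilon/2]\times[-1-\varepsilon/2,1+\varepsilon/2]$ and enlarge it so that it reaches into $\bC_1$, where $f$ is almost periodic and not identically zero. By \cref{lemma:almost-periodic-sup-bound-below} there is $m>0$ with $\norm{V_\tau f}_{R''}\geq m$ for all $\tau$, where $R''$ contains $R'$ and lies in $\bC_1$. The cleanest way is to assume without loss of generality that $\beta\geq 1$, enlarging $\beta$ only enlarges $R$ and $\norm{V_nf}_R$. Then \cref{lemma:zero-count-bound,lemma:zero-product-estimate} apply on the pair $R'\subset R$ with constants independent of $n$. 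They give
\[
N\leq C\log(\norm{V_nf}_R+1),\qquad \abs{g_n}\geq(\norm{V_nf}_R+1)^{-a}\text{ on }R',
\]
where $N$ is the number of zeros of $V_nf$ in $R'$ and $g_n=V_nf\prod(s-s')^{-1}$.

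\textbf{Choice of $\tau_n$ and the estimate.} Write, on $R'$,
\[
\frac{f'}{f}=\sum_{s'}\frac{1}{s-s'}+\frac{g_n'}{g_n}.
\]
Since $\log\abs{g_n}$ is bounded above by $\log\norm{V_nf}_R+N\log(1/\dist)$ by the maximum principle, and below by the previous step, we get $\abs{\log\abs{g_n}}\lesssim\log^2(\norm{V_nf}_R+1)$ on $R'$. Borel--Carathéodory or Harnack-type gradient estimates applied to $\log g_n$ on a smaller rectangle then give $\abs{g_n'/g_n}\lesssim\log^2(\cdots)$ there. For the zero sum, the projections $\Im s'$ of the $N$ zeros are excluded by intervals of length $1/(4N)$ each, total length at most $1/4$. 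We pick $\tau_n\in[n-1,n+1]$ outside these intervals, also outside the intervals for the conjugate side, so that $\abs{\tau_n-\Im s'}\geq 1/(8N)$. Then
\[
\Bigl|\sum_{s'}\frac{1}{s-s'}\Bigr|\leq 8N^2\lesssim\log^2(\norm{V_nf}_R+1)
\]
on the segment. Hence \cref{eq:imaginary-log-derivative-estimate} holds, with $f\neq0$ on the segment automatically. The main obstacle is the gradient bound for $\log g_n$ uniformly in $n$, i.e.\ correctly tracking constants through the Borel--Carathéodory step. The symmetry constraint is a minor bookkeeping issue.
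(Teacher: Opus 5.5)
Your proposal is correct in substance and shares the paper's skeleton. Both arguments use a uniform lower bound $m$ from almost periodicity, the zero count $N\le C\log(\norm{V_nf}_R+1)$ on the inner rectangle $R'$, and a choice of $t_n=\tau_n-n\in[-1,1]$ avoiding short intervals around the ordinates of the zeros of $V_nf$ and, reflected, of $V_{-n}f$.

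Where you differ is the final estimate.
\begin{itemize}
\item The paper takes a branch of $\arg V_nf$ on an $r$-neighbourhood of the segment, with $r\asymp 1/\log(\norm{V_nf}_R+1)$. It bounds the oscillation of the argument there by $O(\log(\norm{V_nf}_R+1))$, ``arguing similarly to'' \cref{lemma:zero-product-estimate}, and then applies Borel--Carath\'eodory and Cauchy estimates on the shrinking radius $r$. The square thus arises as $r^{-1}$ times the oscillation.
\item You instead write $f'/f=\sum_{s'}(s-s')^{-1}+g_n'/g_n$. The zero sum is at most $N\cdot 8N$ by the choice of $\tau_n$. The function $u=\log\abs{g_n}$ is harmonic on $R'$ with $\abs{u}\lesssim\log(\norm{V_nf}_R+1)$, using the maximum principle from above and \cref{lemma:zero-product-estimate} from below; this bound is linear in the logarithm, not quadratic as you wrote. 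Since $\abs{g_n'/g_n}=\abs{\nabla u}$ and the segment lies at distance at least $\varepsilon/2$ from $\partial R'$, the standard gradient estimate for bounded harmonic functions on discs of fixed radius $\varepsilon/4$ gives $\abs{g_n'/g_n}\lesssim\log(\norm{V_nf}_R+1)$, with a constant independent of $n$.
\end{itemize}
So the step you single out as the main obstacle is routine. Your route bounds all of $\abs{f'/f}$ and shows that the $\log^2$ comes entirely from the nearby zeros. It is also more self-contained than the paper's, which leaves the argument-oscillation bound to the reader.

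There are two bookkeeping points.

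\textbf{The case $n=0$.} Neither of your suggested fixes works, and nothing can. The requirement $\tau_{-0}=-\tau_0$ forces $\tau_0=0$. But $f(s)=1-2\cdot 2^{-s}$ satisfies all the hypotheses and vanishes at $s=1$, so with $\alpha<1<\beta$ the conclusion fails at $n=0$. The numbers $\alpha,\beta$ are given, so you cannot perturb them, and no constant absorbs a zero of $f$ on the segment. The paper's proof has the same defect, since it sets $\tau_n=t_n+n$ and $\tau_{-n}=-\tau_n$ for every $n\ge 0$. The right fix is to exempt $n=0$. This is harmless, because \cref{lemma:imaginary-part-limit-sequence} only uses $\abs{n}\to\infty$.

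\textbf{The reduction to $\beta\ge 1$.} Your justification runs the wrong way. Enlarging $\beta$ enlarges $R$ and hence $\norm{V_nf}_R$, so a bound in terms of the enlarged rectangle is weaker than the one claimed. Yet the almost-periodicity argument for the uniform lower bound $m$ needs $R'$ to meet a half-plane where $f$ is almost periodic. The paper makes the same ``without loss of generality'' reduction. Again it costs nothing downstream, where only polynomial growth of $\norm{V_nf}_R$ in $n$ is used.
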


\begin{proof}
    Suppose without loss of generality that $f$ is almost periodic on $\bC_\kappa$ for some $\kappa\in(\alpha,\beta)$. Use \cref{lemma:almost-periodic-sup-bound-below} to find an $m>0$ such that $\norm{V_\tau f}_{[\kappa,\beta]\times[-1,1]}\geq m$ for all $\tau\in\bR$. Set $R'=[\alpha-\varepsilon/2,\beta+\varepsilon/2]\times[-1-\varepsilon/2,1+\varepsilon/2]$ and use \cref{lemma:zero-count-bound} to find a $C>0$, depending only on $R$, $R'$ and $m$, such that
    $$N_{V_nf}(R')\leq C\log(\norm{V_nf}_R+1)$$
    for all $n\in\bZ$. For $n\in\bZ$ and $\delta>0$, set
    $$E_{\delta,n}=\bigcup_{s'\in\sZ(V_nf)\cap R'}(\Im s'-\delta,\Im s'+\delta).$$
    Fix a non-negative integer $n$, and let
    $$\delta_{\pm n}=\frac{1}{4C\log(\norm{V_{\pm n}f}_R+1)}$$
    Then
    $$\abs{E_{\delta_n,n}\cup(-E_{\delta_{-n},-n})}\leq2\delta_nN_{V_nf}(R')+2\delta_{-n}N_{V_{-n}f}(R')\leq1$$
    and so we may take $t_n\in[-1,1]\setminus (E_{\delta_n,n}\cup(-E_{\delta_{-n},-n}))$ as this set has positive measure. Set $\tau_n= t_n+n$ and $\tau_{-n}=-\tau_n$. We will show from here that \cref{eq:imaginary-log-derivative-estimate} holds for $\tau_n$; the corresponding estimate for $\tau_{-n}$ follows by an identical argument. By how $t_n$ was chosen, one estimates
    $$\dist([\alpha,\beta]\times\{t_n\},\partial R'\cup(\sZ(V_nf)\cap R'))\geq\frac{1}{B\log(\norm{V_{n}f}_R+1)}$$
    with $B=2\max\left\{C,\frac{1}{\varepsilon\log(m+1)}\right\}$. Set
    $$r=\frac{1}{2}\dist([\alpha,\beta]\times\{t_n\},\partial R'\cup(\sZ(V_nf)\cap R')).$$
    Clearly $V_n f(s)\neq0$ for all $s\in R'$ with $\dist(s,[\alpha,\beta]\times\{t_n\})<r$, and so we can let $\arg V_n f$ denote a branch of the argument of $V_nf$ on the set of all such $s$. By arguing similarly to \cref{lemma:zero-product-estimate} we can find a constant $D>0$, depending only on $R'$, $R$ and $m$, such that
    $$\abs{\arg V_nf(s)-\arg V_nf(\sigma+it_n)}\leq D\log(\norm{V_n f}_R+1)$$
    for all $\sigma\in[\alpha,\beta]$ and all $s\in R'$ with $\abs{\sigma+it_n-s}<r$. Finally, by a simple consequence of the Borel--Carathéodory inequality and the Cauchy estimates (see, e.g., \cite{titchmarsh_theory_1958}*{Chapter V}), along with the observation that
    $$\Im\frac{f'(\sigma+i\tau_n)}{f(\sigma+i\tau_n)}=\partial_\sigma\arg V_nf(\sigma+it_n)$$
    for all $\sigma\in[\alpha,\beta]$, we can estimate
    $$\Abs{\Im\frac{f'(\sigma+i\tau_n)}{f(\sigma+i\tau_n)}}\leq\frac{4}{\pi r}\sup_{\abs{\sigma+it_n-s}<r}\abs{\arg V_nf(s)-\arg V_nf(\sigma+it_n)}\leq\frac{8BD}{\pi}\log^2(\norm{V_nf}_R+1).$$
    The constant $8BD/\pi$ only depends on $R'$, $R$ and $m$, so the result follows.
\end{proof}

We will use the following consequence of this lemma.

\begin{lemma}\label{lemma:imaginary-part-limit-sequence}
    Let $1\leq p<\infty$ and let $f$ be a somewhere convergent Dirichlet series that is not identically zero with analytic continuation to $\bC_0$ satisfying
    $$\sup_{\sigma>0}\sup_{T\geq1}\frac{1}{2T}\int_{-T}^T\abs{f(\sigma+it)}^p\,\dd t<\infty.$$
    Then, for all $0<\alpha<\beta$, there exists a sequence $\{\tau_n\}_{n\in\bZ}$ of real numbers with $\abs{\tau_n-n}\leq1$ and $\tau_{-n}=-\tau_n$ for all $n\in\bZ$ such that $f(\sigma+i\tau_n)\neq0$ for all $\sigma\in[\alpha,\beta]$ and $n\in\bZ$, and
    $$\lim_{n\to\pm\infty}\frac{1}{2\tau_n}\int_\alpha^\beta\Im\frac{f'(\sigma+i\tau_n)}{f(\sigma+i\tau_n)}(\sigma-\alpha)\,\dd\sigma=0.$$
\end{lemma}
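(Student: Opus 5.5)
We take the sequence $\{\tau_n\}_{n\in\bZ}$ supplied by \cref{lemma:sequence-of-tau-log-estimate}, applied with the given $0<\alpha<\beta$ and some fixed $\varepsilon\in(0,\alpha)$. Let $R=[\alpha-\varepsilon,\beta+\varepsilon]\times[-1-\varepsilon,1+\varepsilon]$. This lemma already gives $\abs{\tau_n-n}\leq1$, $\tau_{-n}=-\tau_n$, and $f(\sigma+i\tau_n)\neq0$ for $\sigma\in[\alpha,\beta]$. It therefore only remains to show the limit statement. Using \cref{eq:imaginary-log-derivative-estimate}, we bound
$$\Abs{\frac{1}{2\tau_n}\int_\alpha^\beta\Im\frac{f'(\sigma+i\tau_n)}{f(\sigma+i\tau_n)}(\sigma-\alpha)\,\dd\sigma}\leq\frac{C(\beta-\alpha)^2}{4\abs{\tau_n}}\log^2(\norm{V_nf}_R+1).$$
Since $\abs{\tau_n}\geq\abs{n}-1$, it suffices to show that $\log^2(\norm{V_nf}_R+1)=o(\abs{n})$ as $n\to\pm\infty$.

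For this growth bound we use the order control from \cref{lemma:order-strict-inequality}. The rectangle $V_n$-translated, namely $R+in$, lies in the half-plane $\bC_{\alpha-\varepsilon}$, and $\alpha-\varepsilon>0$. By \cref{lemma:order-strict-inequality} we have $\mu_f(\alpha-\varepsilon)<1/p\leq1$. Hence there are constants $A,\mu$ with $\abs{f(\sigma+it)}\leq A(1+\abs{t})^{\mu}$ for all $\sigma>\alpha-\varepsilon$. This gives
$$\norm{V_nf}_R\leq A(2+\varepsilon+\abs{n})^\mu,$$
and consequently $\log^2(\norm{V_nf}_R+1)=O(\log^2\abs{n})=o(\abs{n})$.

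Combining the two displays, the quantity in question is $O(\log^2\abs{n}/\abs{n})$, which tends to $0$ as $n\to\pm\infty$. This proves the claim. The only potential subtlety is the uniformity of the order bound over the whole strip $[\alpha-\varepsilon,\beta+\varepsilon]$, and this is exactly what the definition of $\mu_f(\kappa)$ provides, since the bound holds uniformly for $\sigma>\kappa$. We also note that the $L^p$ averaging from \cref{lemma:sup-mean-values} is not needed here: the pointwise polynomial growth already gives a logarithmic bound after taking $\log^2$.
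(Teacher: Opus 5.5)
Your proof is correct and follows the paper's argument. You take the sequence from \cref{lemma:sequence-of-tau-log-estimate}, bound the integral by a constant times $\log^2(\norm{V_nf}_R+1)/\abs{\tau_n}$, and use the polynomial growth bound from \cref{lemma:order-strict-inequality} to show the logarithmic factor is $O(\log^2\abs{n})$. Your estimate of $\norm{V_nf}_R$ in terms of $2+\varepsilon+\abs{n}$ is slightly more careful than the paper's version, which writes the bound in terms of $\abs{\tau_n}$ instead.
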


\begin{proof}
    Let $\{\tau_n\}_{n\in\bZ}$ and $C>0$ be as in \cref{lemma:sequence-of-tau-log-estimate} with $\varepsilon=\alpha/2$. Use \cref{lemma:order-strict-inequality} to find a $D>0$ such that
    $$\abs{f(\sigma+it)}\leq D(1+\abs{t})^{1/p}$$
    for all $\sigma+it\in\bC_{\alpha/2}$. Then we can estimate
    $$\Abs{\frac{1}{2\tau_n}\int_\alpha^\beta\Im\frac{f'(\sigma+i\tau_n)}{f(\sigma+i\tau_n)}(\sigma-\alpha)\,\dd\sigma}\leq\frac{C(\beta-\alpha)^2}{2}\frac{\log^2(D(1+\abs{\tau_n})^{1/p}+1)}{\abs{\tau_n}}\to0$$
    as $n\to\pm\infty$.
\end{proof}

With this, we can prove \cref{theorem:jensen-formula}.

\begin{proof}[Proof of \cref{theorem:jensen-formula}]
    Fix $\kappa>0$. As $f(+\infty)\neq0$, we can find a $\beta>\kappa$ such that $\abs{f(s)}\geq\abs{f(+\infty)}/2>0$ for all $s\in\bC_\beta$. Let $\{\tau_n\}_{n\in\bZ}$ be as in \cref{lemma:imaginary-part-limit-sequence} with $\alpha=\kappa$ and use Littlewood's argument principle \cite{littlewood_zeros_1924} (see also \cite{titchmarsh_theory_1958}*{Theorem 3.7}) to write
    \begin{align*}
        2\pi\sum_{\substack{s\in\sZ(f)\\\abs{\Im s}<\tau_n\\\Re s>\sigma_0}}(\Re s-\sigma_0)
        &=\int_{-\tau_n}^{\tau_n}\log\abs{f(\sigma_0+it)}\,\dd t-\int_{-\tau_n}^{\tau_n}\log\abs{f(\beta+it)}\,\dd t \\[-8mm]
        &\qquad+(\beta-\sigma_0)\Re\int_{-\tau_n}^{\tau_n}\frac{f'(\beta+it)}{f(\beta+it)}\,\dd t \\
        &\qquad+\Im\int_{\sigma_0}^\beta\left(\frac{f'(\sigma+i\tau_{-n})}{f(\sigma+i\tau_{-n})}-\frac{f'(\sigma+i\tau_n)}{f(\sigma+i\tau_n)}\right)(\sigma-\sigma_0)\,\dd\sigma
    \end{align*}
    for $n\in\bZ^+$ and $\sigma_0\in(\kappa,\beta)$. For the first term on the right-hand side of the above, it follows by \cref{theorem:existence-of-jessen-function} that if we divide it by $2\tau_n$ and let $n\to\infty$, then it converges to $\sJ(f,\sigma_0)$ uniformly for $\sigma_0\in(\kappa,\beta)$. For the second term, as $f$ is bounded away from zero on a neighborhood of $\overline\bC_\beta$, any fixed branch $\log f$ of the logarithm of $f$ on this neighborhood is analytic and almost periodic, by which we have that
    \begin{multline*}
    \frac{1}{2\tau_n}\int_{-\tau_n}^{\tau_n}\log\abs{f(\beta+it)}\,\dd t=\Re\left(\frac{1}{2\tau_n}\int_{-\tau_n}^{\tau_n}\log f(\beta+it)\,\dd t\right) \\\to \Re \log f(+\infty)=\log\abs{f(+\infty)}
    \end{multline*}
    as $n\to\infty$. For the third term, we can argue similarly by almost periodicity to obtain that
    $$\frac{\beta-\sigma_0}{2\tau_n}\Re\int_{-\tau_n}^{\tau_n}\frac{f'(\beta+it)}{f(\beta+it)}\,\dd t\to(\beta-\sigma_0)\Re\frac{f'(+\infty)}{f(+\infty)}=0$$
    as $n\to\infty$ uniformly for $\sigma_0\in(\kappa,\beta)$. Finally, the last term goes to zero uniformly for $\sigma_0\in(\kappa,\beta)$ after dividing by $2\tau_n$ and letting $n\to\infty$ by how $\{\tau_n\}_{n\in\bZ}$ was chosen. This shows that
    $$\lim_{n\to\infty}\frac{\pi}{\tau_n}\sum_{\substack{s\in\sZ(f)\\\abs{\Im s}<\tau_n\\\Re s>\sigma_0}}(\Re s-\sigma_0)=\sJ(f,\sigma_0)-\log\abs{f(+\infty)}$$
    uniformly for $\sigma_0\in(\kappa,\beta)$. This is easily seen to also be uniform for $\sigma_0\geq\beta$ as the sum is empty for such $\sigma_0$, and the Jessen function converges uniformly to $\log\abs{f(+\infty)}$ by a similar argument through almost periodicity as the above. Finally, for the general limit, take $T>2$ and choose $n\in\bZ^+$ such that $\abs{T-n}\leq1$. Then $\tau_{n-2}\leq T$ and $T\leq\tau_{n+2}$, so that we can estimate
    $$\frac{\pi}{n+1}\sum_{\substack{s\in\sZ(f)\\\abs{\Im s}<\tau_{n-2}\\\Re s>\sigma_0}}(\Re s-\sigma_0)\leq\frac{\pi}{T}\sum_{\substack{s\in\sZ(f)\\\abs{\Im s}<T\\\Re s>\sigma_0}}(\Re s-\sigma_0)\leq\frac{\pi}{n-1}\sum_{\substack{s\in\sZ(f)\\\abs{\Im s}<\tau_{n+2}\\\Re s>\sigma_0}}(\Re s-\sigma_0).$$
    The result now follows by what we have already shown by letting $T\to\infty$ and applying the squeeze theorem since $\tau_n/n\to1$ as $n\to\infty$.
\end{proof}

From this we can now deduce the existence of the mean counting function for $f\in\sH^p$.

\begin{theorem}\label{theorem:existence-of-mean-counting-function}
    Let $f\in\sH^p$ with $1\leq p<\infty$, and let $\chi\in\sC_p(f)$. Then the mean counting function
    $$\sM(f,\zeta)=\lim_{\sigma_0\downarrow0}\lim_{T\to\infty}\frac{\pi}{T}\sum_{\substack{s\in\sZ(f_\chi-\zeta)\\\abs{\Im s}<T\\\Re s>\sigma_0}}(\Re s-\sigma_0)$$
    exists and is finite for all $\zeta\in\bC\setminus\{f(+\infty)\}$, and its value is independent of $\chi\in\sC_p(f)$.
\end{theorem}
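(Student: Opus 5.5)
Apply \cref{theorem:jensen-formula} to $g=f_\chi-\zeta$, then identify the limit as $\sigma_0\downarrow0$ with a quantity depending only on $f$ and $\zeta$.

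\emph{Applying Jensen's formula.} Fix $\chi\in\sC_p(f)$ and $\zeta\in\bC\setminus\{f(+\infty)\}$. The function $g$ is a somewhere convergent Dirichlet series with analytic continuation to $\bC_0$, since $f_\chi$ has one. Its constant term is $g(+\infty)=f(+\infty)-\zeta\neq0$, because $f_\chi(+\infty)=\chi(1)a_1=a_1$. The Carlson condition \cref{eq:carlson-condition} holds for $g$ by Minkowski's inequality, since constants trivially satisfy it. Thus \cref{theorem:jensen-formula} gives, for every $\sigma_0>0$,
$$\lim_{T\to\infty}\frac{\pi}{T}\sum_{\substack{s\in\sZ(f_\chi-\zeta)\\\abs{\Im s}<T\\\Re s>\sigma_0}}(\Re s-\sigma_0)=\sJ(f_\chi-\zeta,\sigma_0)-\log\abs{f(+\infty)-\zeta}.$$
By \cref{theorem:existence-of-jessen-function}, $\sigma\mapsto\sJ(f_\chi-\zeta,\sigma)$ is decreasing and convex on $(0,\infty)$, so the limit as $\sigma_0\downarrow0$ exists in $(-\infty,\infty]$. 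Finiteness follows from Jensen's inequality: for $T\geq1$,
$$\frac{1}{2T}\int_{-T}^T\log\abs{g(\sigma+it)}\,\dd t\leq\frac1p\log\Bigl(\frac{1}{2T}\int_{-T}^T\abs{g(\sigma+it)}^p\,\dd t\Bigr),$$
and the right-hand side is bounded uniformly in $\sigma>0$ by the Carlson condition. Hence $\sM(f_\chi,\zeta)$ exists, is finite, and equals $\lim_{\sigma_0\downarrow0}\sJ(f_\chi-\zeta,\sigma_0)-\log\abs{f(+\infty)-\zeta}$.

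\emph{Independence of $\chi$.} It suffices to show that $\sJ(f_\chi-\zeta,\sigma)$ does not depend on $\chi\in\sC_p(f)$ for each fixed $\sigma>0$. By \cref{theorem:existence-of-jessen-function}, $\sJ(f_\chi-\zeta,\sigma)=\lim_{N\to\infty}\sJ(R_N^k(f_\chi-\zeta),\sigma)$. Riesz means commute with twisting: $R_N^k(f_\chi-\zeta)=(R_N^kf-\zeta)_\chi$. The Dirichlet polynomial $P=R_N^kf-\zeta$ is almost periodic, and $P_\chi$ is a uniform limit on $\overline{\bC}_\kappa$ (any $\kappa<\sigma$) of vertical translates $V_{\tau_j}P$, by Kronecker's theorem as recalled in the introduction. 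Jessen functions of almost periodic functions are invariant under vertical translation, and they are continuous under uniform convergence of functions that are not identically zero. For the latter I would run the estimate in the proof of \cref{theorem:existence-of-jessen-function} with \cref{lemma:delta-log-limits,lemma:sup-bound-below-sequence} applied to the uniformly convergent sequence $V_{\tau_j}P$. Together these give $\sJ(P_\chi,\sigma)=\sJ(P,\sigma)$. Letting $N\to\infty$ then shows that $\sJ(f_\chi-\zeta,\sigma)=\lim_N\sJ(R_N^kf-\zeta,\sigma)$ is independent of $\chi$.

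\emph{Main obstacle.} The delicate step is the continuity of $\sJ$ under uniform limits near zeros. This is the classical Jessen theory (\cite{jessen_uber_1933}), which I would cite, or reprove with the $\log\abs{\cdot}_\delta$ truncation already used in \cref{theorem:existence-of-jessen-function}. A second point to check is that the constants in \cref{lemma:delta-log-limits} are uniform along the sequence $V_{\tau_j}P$; this holds because $\norm{V_\tau V_{\tau_j}P}_K=\norm{V_{\tau+\tau_j}P}_K$, so all the means involved coincide with those of $P$.
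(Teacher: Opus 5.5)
Your proof is correct and follows the paper's argument. You reduce via \cref{theorem:jensen-formula} to $\lim_{\sigma\downarrow0}\sJ(f_\chi-\zeta,\sigma)$, get existence from monotonicity, and get independence of $\chi$ from the Riesz-mean approximation in \cref{theorem:existence-of-jessen-function} together with the fact that twisting a Dirichlet polynomial does not change its Jessen function. The paper simply cites Jessen's Zusatz zu Satz I for that last fact, which your Kronecker-plus-continuity argument reproves. The only minor deviation is in finiteness: you bound $\sJ$ from above directly by Jensen's inequality and the Carlson condition, whereas the paper compares with $\lim_{\sigma\downarrow0}M_p(f_\chi-\zeta,\sigma)=\norm{f_\chi-\zeta}_{\sH^p}$ from Brevig--Kouroupis; both work.
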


\begin{proof}
    Fix $\chi\in\sC_p(f)$ and $\zeta\in\bC\setminus\{f(+\infty)\}$. By \cref{theorem:jensen-formula} it suffices to show that
    $$\lim_{\sigma\downarrow0}\sJ(f_\chi-\zeta,\sigma)$$
    exists and is finite, and that its value is independent of $\chi\in\sC_p(f)$. That it exists is clear since the Jessen function is monotone by \cref{theorem:existence-of-jessen-function}, and from this we also have that
    $$-\infty<\sup_{\sigma>0}\sJ(f_\chi-\zeta,\sigma)=\lim_{\sigma\downarrow0}\sJ(f_\chi-\zeta,\sigma)\leq\lim_{\sigma\downarrow0}M_p(f_\chi-\zeta,\sigma)=\norm{f_\chi-\zeta}_{\sH^p}<\infty$$
    where the last equality follows from \cite{brevig_carlsons_2025}*{Theorem 1}, from which finiteness follows. That its value is independent of $\chi\in\sC_p(f)$ is an immediate consequence of \cref{theorem:existence-of-jessen-function} and that
    $$\sJ(R_N^kf_\chi-\zeta,\sigma)$$
    is independent of $\chi\in\sC_p(f)$ for all $k>3$, $N\geq2$ and $\sigma>0$, which follows from \cite{jessen_uber_1933}*{Zusatz zu Satz I}.
\end{proof}

Our final goal is now to deduce \cref{theorem:mean-counting-limit-interchange}, which we will do by a similar idea to that used by Kouroupis and Perfekt in \cite{kouroupis_composition_2023}*{Section 4.2}. The theorem will follow easily from the above theorem together with \cref{theorem:ergodic-monotone-convergence} and the following lemma.

\begin{lemma}\label{lemma:mean-counting-as-mean-value}
    Let $f$ be an analytic function on $\bC_0$ and let $\sigma_0\geq0$. If one of the limits
    $$\lim_{T\to\infty}\frac{\pi}{2T}\int_{-T}^T\sum_{\substack{s\in\sZ(f)\\-1+t<\Im s<1+t\\\Re s>\sigma_0}}(\Re s-\sigma_0)\,\dd t\qquad\text{and}\qquad\lim_{T\to\infty}\frac{\pi}{T}\sum_{\substack{s\in\sZ(f)\\\abs{\Im s}<T\\\Re s>\sigma_0}}(\Re s-\sigma_0)$$
    exists, then so does the other, and their values are equal.
\end{lemma}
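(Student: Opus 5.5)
This is a Fubini/Tonelli swap followed by a boundary-effect estimate. Write $w(s)=\Re s-\sigma_0\ge 0$ for zeros $s$ of $f$ with $\Re s>\sigma_0$, and set
$$S(T)=\sum_{\substack{s\in\sZ(f)\\\abs{\Im s}<T\\\Re s>\sigma_0}}w(s).$$
This is a nondecreasing function of $T$, and it is possibly infinite. First I would interchange sum and integral, which is legitimate by Tonelli since all terms are non-negative:
$$\int_{-T}^T\sum_{\substack{s\in\sZ(f)\\ t-1<\Im s<t+1\\ \Re s>\sigma_0}}w(s)\,\dd t=\sum_{\substack{s\in\sZ(f)\\\Re s>\sigma_0}}w(s)\,\babs{\{t\in[-T,T]:\abs{t-\Im s}<1\}}.$$

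The length $\babs{\{t\in[-T,T]:\abs{t-\Im s}<1\}}$ equals $2$ when $\abs{\Im s}\le T-1$. It lies in $[0,2]$ in general, and it vanishes when $\abs{\Im s}\ge T+1$. This gives the sandwich
$$2S(T-1)\le\int_{-T}^T\sum_{\substack{s\in\sZ(f)\\ t-1<\Im s<t+1\\ \Re s>\sigma_0}}w(s)\,\dd t\le 2S(T+1).$$
A small point: zeros with $\abs{\Im s}$ exactly equal to $T-1$ are excluded from $S(T-1)$ because of the strict inequality, and that is harmless. I would also note that if $S$ is infinite for some $T$, then both quantities are infinite for all large $T$, so both limits fail to exist, and the equivalence holds trivially.

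Dividing by $2T$ gives
$$\frac{T-1}{T}\cdot\frac{\pi S(T-1)}{T-1}\le\frac{\pi}{2T}\int_{-T}^T\sum(\cdots)\,\dd t\le\frac{T+1}{T}\cdot\frac{\pi S(T+1)}{T+1}.$$
If $\pi S(T)/T\to L$, then both bounds tend to $L$, and the squeeze theorem gives the first limit.

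For the converse I would invert the sandwich using monotonicity of $S$. Write $A(T)$ for the averaged quantity on the left of the statement. Then $2S(T)\le\int_{-(T+1)}^{T+1}$ of the summand, so $\pi S(T)/T\le \frac{T+1}{T}A(T+1)$. Likewise $\pi S(T)/T\ge\frac{T-1}{T}A(T-1)$. Letting $T\to\infty$ then gives the limit of $\pi S(T)/T$ and shows it equals the limit of $A$.

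Nothing here is genuinely hard. The only care needed is the case where $L=\infty$ or the sums diverge, which I would treat as not existing (a finite limit), and the bookkeeping at the strip boundaries.
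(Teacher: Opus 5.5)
Your proof is correct and follows essentially the same route as the paper: you swap the sum and the integral by Tonelli using non-negativity, then sandwich the averaged quantity between the shifted partial sums $S(T\pm1)$ and squeeze. The paper writes only the single chain $A(T)\le \pi S(T+1)/T\le \frac{T+2}{T}A(T+2)$ and calls the reverse direction ``similar'', whereas you write out both directions explicitly and also handle the degenerate case of divergent sums.
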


\begin{proof}
    By interchanging sum and integral and using that all terms are non-negative, we can estimate
    \begin{align*}
    \frac{\pi}{2T}\int_{-T}^T\sum_{\substack{s\in\sZ(f)\\-1+t<\Im s<1+t\\\Re s>\sigma_0}}(\Re s-\sigma_0)\,\dd t
        &\leq\frac{\pi}{T}\sum_{\substack{s\in\sZ(f)\\\abs{\Im s}<1+T\\\Re s>\sigma_0}}\frac{\Re s-\sigma_0}{2}\int_{\Im s-1}^{\Im s+1}\,\dd t \\
        &\leq\frac{\pi}{2T}\int_{-T-2}^{T+2}\sum_{\substack{s\in\sZ(f)\\-1+t<\Im s<1+t\\\Re s>\sigma_0}}(\Re s-\sigma_0)\,\dd t
    \end{align*}
    from which it follows that if the first limit exists, then so does the second, and their values are equal. A similar argument also shows that if the second limit exists, then so does the first.
\end{proof}

\begin{proof}[Proof of \cref{theorem:mean-counting-limit-interchange}]
    Fix $\zeta\in\bC\setminus\{f(+\infty)\}$. For $n\in\bZ^+$, set
    $$F_n(\chi)=\pi\sum_{\substack{s\in\sZ(f_\chi-\zeta)\\\abs{\Im s}<1\\\Re s>1/n}}\left(\Re s-\frac{1}{n}\right)$$
    for $\chi\in\sC_p(f)$ and $F_n(\chi)=0$ for $\chi\in\bT^\infty\setminus\sC_p(f)$. Then $\{F_n\}_{n\in\bZ^+}$ is non-decreasing and each $F_n$ is non-negative, so by \cref{theorem:ergodic-monotone-convergence} we can find a set $E\subseteq\bT^\infty$ of full measure such that
    $$\lim_{n\to\infty}\lim_{T\to\infty}\frac{1}{2T}\int_{-T}^TF_n(\chi\fp^{-it})\,\dd t=\lim_{T\to\infty}\frac{1}{2T}\int_{-T}^TF(\chi\fp^{-it})\,\dd t$$
    for all $\chi\in E$, where
    $$F(\chi)=\lim_{n\to\infty}F_n(\chi)=\pi\sum_{\substack{s\in\sZ(f_\chi-\zeta)\\\abs{\Im s}<1\\\Re s>0}}\Re s$$
    for $\chi\in\sC_p(f)$. From this the result is immediate from \cref{theorem:existence-of-mean-counting-function,lemma:mean-counting-as-mean-value} and taking $E\cap\sC_p(f)$ as the desired set of full measure.
\end{proof}

It should be noted that the set of full measure in \cref{theorem:mean-counting-limit-interchange} depends on $\zeta$, and so it would be of interest to know whether one can choose one such subset of $\bT^\infty$ of full measure that works for all $\zeta\in\bC\setminus\{f(+\infty)\}$. We believe this to be true, but are unable to show it.

\begin{question}
    Can one choose the set of full measure in \cref{theorem:mean-counting-limit-interchange} independently of $\zeta\in\bC\setminus\{f(+\infty)\}$?
\end{question}

Having shown the existence of the mean counting function in a generalized sense for $f\in\sH^p$, one is led to ask whether this can be extended to an analogous class to the Nevanlinna class for Dirichlet series. We are not able to answer this question; however, we find it of interest to describe the problem. In \cite{brevig_mean_2021}, Brevig and Perfekt introduced the class $\sN_u$ consisting of all Dirichlet series $f$ that converge uniformly on $\bC_\kappa$ for all $\kappa>0$ such that
$$\limsup_{\sigma\downarrow0}\lim_{T\to\infty}\frac{1}{2T}\int_{-T}^T\log^+\abs{f(\sigma+it)}\,\dd t<\infty,$$
where $\log^+ x=\max\{\log x,0\}$. They showed that for $f\in\sN_u$, the mean counting function exists; this can clearly also be deduced from the same argument as in \cref{theorem:existence-of-mean-counting-function} since $f$ is bounded on $\bC_\kappa$ for any $\kappa>0$. It should be noted that their definition of the mean counting function differs slightly from ours; however, the two are equivalent for almost periodic functions, and the definition we use is more natural in view of the Littlewood--Paley formula and Jensen's formula; this should be compared with \cite{brevig_almost_2025}. The assumption of uniform convergence is rather unsatisfactory for a Nevanlinna class of Dirichlet series, and so in \cite{guo_nevanlinna_2025}, Guo, Ni and Zhou defined the Nevanlinna class of Dirichlet series $\sN$ as the completion of $\sN_u$ in the metric
$$d(f,g)=\norm{f-g}_0$$
where
$$\norm{f}_0=\limsup_{\sigma\downarrow0}\lim_{T\to\infty}\frac{1}{2T}\int_{-T}^T\log(\abs{f(\sigma+it)}+1)\,\dd t$$
for $f,g\in\sN_u$.

\begin{question}
    Let $f\in\sN$. Does there exist a set $E\subseteq\bT^\infty$ of full measure such that if $\chi\in E$, then $f_\chi$ has an analytic continuation to $\bC_0$ for which
    $$\lim_{\sigma_0\downarrow0}\lim_{T\to\infty}\frac{\pi}{T}\sum_{\substack{s\in\sZ(f_\chi-\zeta)\\\abs{\Im s}<T\\\Re s>\sigma_0}}(\Re s-\sigma_0)$$
    exists and is finite for all $\zeta\in\bC\setminus\{f(+\infty)\}$?
\end{question}

\bibliography{references}

@book{queffelec_diophantine_2020,
    edition = {Second},
    series = {Texts and {Readings} in {Mathematics}},
    title = {Diophantine approximation and {Dirichlet} series},
    volume = {80},
    isbn = {978-981-15-9350-5 978-981-15-9351-2 978-93-86279-82-8},
    doi = {10.1007/978-981-15-9351-2},
    publisher = {Hindustan Book Agency, New Delhi; Springer, Singapore},
    author = {Queffelec, Hervé and Queffelec, Martine},
    year = {2020},
    mrnumber = {4241378},
}

@misc{brevig_carlsons_2025,
    title = {Carlson's theorem and vertical limit functions},
    url = {https://arxiv.org/abs/2510.05793},
    doi = {10.48550/arXiv.2510.05793},
    publisher = {arXiv},
    author = {Brevig, Ole Fredrik and Kouroupis, Athanasios},
    year = {2025},
    note = {arXiv:2510.05793},
}

@article{saksman_integral_2009,
    title = {Integral means and boundary limits of {Dirichlet} series},
    volume = {41},
    issn = {0024-6093,1469-2120},
    doi = {10.1112/blms/bdp004},
    number = {3},
    journal = {Bulletin of the London Mathematical Society},
    author = {Saksman, Eero and Seip, Kristian},
    year = {2009},
    mrnumber = {2506825},
    pages = {411--422},
}

@article{brevig_almost_2025,
    title = {Almost periodicity and boundary values of {Dirichlet} series},
    volume = {378},
    issn = {0002-9947,1088-6850},
    doi = {10.1090/tran/9420},
    number = {8},
    journal = {Transactions of the American Mathematical Society},
    author = {Brevig, Ole Fredrik and Kouroupis, Athanasios and Perfekt, Karl-Mikael},
    year = {2025},
    mrnumber = {4929857},
    pages = {5677--5700},
}

@article{bayart_approximation_2016,
    title = {Approximation numbers of composition operators on ${H}^p$ spaces of {Dirichlet} series},
    volume = {66},
    issn = {0373-0956,1777-5310},
    doi = {10.5802/aif.3019},
    number = {2},
    journal = {Université de Grenoble. Annales de l'Institut Fourier},
    author = {Bayart, Frédéric and Queffélec, Hervé and Seip, Kristian},
    year = {2016},
    mrnumber = {3477884},
    pages = {551--588},
}

@article{brevig_mean_2021,
    title = {A mean counting function for {Dirichlet} series and compact composition operators},
    volume = {385},
    issn = {0001-8708,1090-2082},
    doi = {10.1016/j.aim.2021.107775},
    journal = {Advances in Mathematics},
    author = {Brevig, Ole Fredrik and Perfekt, Karl-Mikael},
    year = {2021},
    mrnumber = {4252762},
    pages = {Paper No. 107775, 48},
}

@article{jessen_uber_1933,
    title = {Über die {Nullstellen} einer analytischen fastperiodischen {Funktion}. {Eine} {Verallgemeinerung} der {Jensenschen} {Formel}},
    volume = {108},
    issn = {0025-5831,1432-1807},
    doi = {10.1007/BF01452849},
    number = {1},
    journal = {Mathematische Annalen},
    author = {Jessen, Børge},
    year = {1933},
    mrnumber = {1512861},
    pages = {485--516},
}

@article{littlewood_zeros_1924,
    title = {On the zeros of the {Riemann} zeta-function},
    volume = {22},
    copyright = {https://www.cambridge.org/core/terms},
    issn = {0305-0041, 1469-8064},
    doi = {10.1017/S0305004100014225},
    language = {en},
    number = {3},
    journal = {Mathematical Proceedings of the Cambridge Philosophical Society},
    author = {Littlewood, J. E.},
    month = sep,
    year = {1924},
    pages = {295--318},
}

@book{titchmarsh_theory_1958,
    title = {The theory of functions},
    publisher = {Oxford University Press, Oxford},
    author = {Titchmarsh, E. C.},
    year = {1958},
    mrnumber = {3155290},
}

@article{borchsenius_mean_1948,
    title = {Mean motions and values of the {Riemann} zeta function},
    volume = {80},
    issn = {0001-5962,1871-2509},
    doi = {10.1007/BF02393647},
    journal = {Acta Mathematica},
    author = {Borchsenius, Vibeke and Jessen, Børge},
    year = {1948},
    mrnumber = {27796},
    pages = {97--166},
}

@article{bohr_uber_1913_2,
    title = {Über die {Bedeutung} der {Potenzreihen} unendlich vieler {Variablen} in der {Theorie} der {Dirichletschen} {Reihe} $\sum\frac{a_n}{n^s}$},
    volume = {1913},
    url = {https://eudml.org/doc/58885},
    journal = {Nachrichten von der Gesellschaft der Wissenschaften zu Göttingen, Mathematisch-Physikalische Klasse},
    author = {Bohr, H.},
    year = {1913},
    pages = {441--488},
}

@book{besicovitch_almost_1955,
    title = {Almost periodic functions},
    publisher = {Dover Publications, Inc., New York},
    author = {Besicovitch, A. S.},
    year = {1955},
    mrnumber = {68029},
}

@article{bayart_hardy_2002,
    title = {Hardy spaces of {Dirichlet} series and their composition operators},
    volume = {136},
    issn = {0026-9255,1436-5081},
    doi = {10.1007/s00605-002-0470-7},
    number = {3},
    journal = {Monatshefte für Mathematik},
    author = {Bayart, Frédéric},
    year = {2002},
    mrnumber = {1919645},
    pages = {203--236},
}

@article{helson_compact_1969,
    title = {Compact groups and {Dirichlet} series},
    volume = {8},
    issn = {0004-2080,1871-2487},
    doi = {10.1007/BF02589554},
    journal = {Arkiv för Matematik},
    author = {Helson, Henry},
    year = {1969},
    mrnumber = {285858},
    pages = {139--143},
}

@article{aron_dirichlet_2017,
    title = {Dirichlet approximation and universal {Dirichlet} series},
    volume = {145},
    issn = {0002-9939,1088-6826},
    doi = {10.1090/proc/13607},
    number = {10},
    journal = {Proceedings of the American Mathematical Society},
    author = {Aron, Richard M. and Bayart, Frédéric and Gauthier, Paul M. and Maestre, Manuel and Nestoridis, Vassili},
    year = {2017},
    mrnumber = {3690628},
    pages = {4449--4464},
}

@article{kouroupis_composition_2023,
    title = {Composition operators on weighted {Hilbert} spaces of {Dirichlet} series},
    volume = {108},
    issn = {0024-6107,1469-7750},
    doi = {10.1112/jlms.12771},
    number = {2},
    journal = {Journal of the London Mathematical Society. Second Series},
    author = {Kouroupis, Athanasios and Perfekt, Karl-Mikael},
    year = {2023},
    mrnumber = {4626725},
    pages = {837--868},
}

@book{hardy_general_1964,
    series = {Cambridge {Tracts} in {Mathematics} and {Mathematical} {Physics}},
    title = {The general theory of {Dirichlet}'s series},
    volume = {No. 18},
    publisher = {Stechert-Hafner, Inc., New York},
    author = {Hardy, G. H. and Riesz, M.},
    year = {1964},
    mrnumber = {185094},
}

@article{jessen_mean_1945,
    title = {Mean motions and zeros of almost periodic functions},
    volume = {77},
    issn = {0001-5962,1871-2509},
    doi = {10.1007/BF02392225},
    journal = {Acta Mathematica},
    author = {Jessen, Børge and Tornehave, Hans},
    year = {1945},
    mrnumber = {15558},
    pages = {137--279},
}

@book{conway_functions_1978,
    address = {New York, NY},
    series = {Graduate {Texts} in {Mathematics}},
    title = {Functions of {One} {Complex} {Variable} {I}},
    volume = {11},
    copyright = {http://www.springer.com/tdm},
    isbn = {978-0-387-94234-6 978-1-4612-6313-5},
    url = {http://link.springer.com/10.1007/978-1-4612-6313-5},
    doi = {10.1007/978-1-4612-6313-5},
    language = {en},
    urldate = {2025-10-27},
    publisher = {Springer New York},
    author = {Conway, John B.},
    year = {1978},
}

@book{rudin_principles_1976,
    edition = {Third},
    series = {International {Series} in {Pure} and {Applied} {Mathematics}},
    title = {Principles of mathematical analysis},
    publisher = {McGraw-Hill Book Co., New York-Auckland-Düsseldorf},
    author = {Rudin, Walter},
    year = {1976},
    mrnumber = {385023},
}

@article{guo_nevanlinna_2025,
    title = {Nevanlinna class, {Dirichlet} series and {Szegő}'s problem},
    volume = {75},
    issn = {0373-0956,1777-5310},
    doi = {10.5802/aif.3696},
    number = {4},
    journal = {Université de Grenoble. Annales de l'Institut Fourier},
    author = {Guo, Kunyu and Ni, Jiaqi and Zhou, Qi},
    year = {2025},
    mrnumber = {4940712},
    pages = {1603--1647},
}

@article{hedenmalm_hilbert_1997,
    title = {A {Hilbert} space of {Dirichlet} series and systems of dilated functions in ${L}^2(0,1)$},
    volume = {86},
    issn = {0012-7094,1547-7398},
    doi = {10.1215/S0012-7094-97-08601-4},
    number = {1},
    journal = {Duke Mathematical Journal},
    author = {Hedenmalm, Håkan and Lindqvist, Peter and Seip, Kristian},
    year = {1997},
    mrnumber = {1427844},
    pages = {1--37},
}

@article{brevig_volterra_2019,
    title = {Volterra operators on {Hardy} spaces of {Dirichlet} series},
    volume = {754},
    issn = {0075-4102,1435-5345},
    doi = {10.1515/crelle-2016-0069},
    journal = {Journal für die Reine und Angewandte Mathematik. [Crelle's Journal]},
    author = {Brevig, Ole Fredrik and Perfekt, Karl-Mikael and Seip, Kristian},
    year = {2019},
    mrnumber = {4000573},
    pages = {179--223},
}

@article{bayart_counting_2024,
    title = {Counting functions for {Dirichlet} series and compactness of composition operators},
    volume = {56},
    issn = {0024-6093,1469-2120},
    doi = {10.1112/blms.12923},
    number = {1},
    journal = {Bulletin of the London Mathematical Society},
    author = {Bayart, Frédéric},
    year = {2024},
    mrnumber = {4705317},
    pages = {188--197},
}

@article{helson_hankel_2006,
    title = {Hankel forms and sums of random variables},
    volume = {176},
    issn = {0039-3223,1730-6337},
    doi = {10.4064/sm176-1-6},
    number = {1},
    journal = {Studia Mathematica},
    author = {Helson, Henry},
    year = {2006},
    mrnumber = {2263964},
    pages = {85--92},
}

\end{document}